\definecolor{darkgreen}{rgb}{0.0, 0.7, 0.0}
\definecolor{purple}{rgb}{0.5, 0.0, 0.5}
\definecolor{red}{rgb}{0.8, 0.2, 0.0}
\newtheorem{thm}{Theorem}[section]
\newtheorem{bthm}{Theorem}
\newtheorem{bcor}{Corollary}
\newtheorem{lemma}[thm]{Lemma}
\newtheorem{prop}[thm]{Proposition}
\newtheorem{claim}[thm]{Claim}
\newtheorem{subclaim}[thm]{Subclaim}
\numberwithin{equation}{section}
\theoremstyle{definition}
\newtheorem{defi}[thm]{Definition}
\newtheorem{notation}[thm]{Notation}
\theoremstyle{remark}
\newtheorem{remark}[thm]{Remark}
\newtheorem{example}[thm]{Example}
\newcommand{\Z}{\mathbb{Z}}
\newcommand{\C}{\mathbb{C}}
\newcommand{\R}{\mathbb{R}}
\newcommand{\Pic}{\operatorname{Pic}}
\newcommand{\cHom}{\mathcal Hom}
\DeclareMathOperator{\Hom}{{Hom}}
\DeclareMathOperator{\Ext}{{Ext}}
\def \Im{{\rm Im}}
\DeclareMathOperator{\id}{id}
\def \P{\mathbb{P}}
\def \F{\mathcal F}
\def \N {\mathcal N}
\def\I{\mathcal I}
\def \L{\mathcal L}
\def \E{\mathcal E}
\def \G{\mathcal G}
\def \H{\mathcal H}
\def \U{\mathcal U}
\def\O{\mathcal O}
\def\M0{\mathcal M^0}
\DeclareMathOperator{\Proj}{{Proj}}
\DeclareMathOperator{\Sym}{{Sym}}
\title{On the classification of non-big Ulrich vector bundles on fourfolds}
\author[A.F. Lopez, R. Mu\~{n}oz and J.C. Sierra]{Angelo Felice Lopez*, Roberto Mu\~{n}oz and Jos\'e Carlos Sierra}
\address{\hskip -.43cm Angelo Felice Lopez, Dipartimento di Matematica e Fisica, Universit\`a di Roma
Tre, Largo San Leonardo Murialdo 1, 00146, Roma, Italy. e-mail {\tt lopez@mat.uniroma3.it}}
\address{\hskip -.43cm Roberto Mu\~{n}oz, Departamento de Matem\'atica Aplicada a las TIC, ETSISI Universidad Polit\'ecnica de Madrid. C/ Alan Turing s/n. 28031, Madrid, Spain. email: {\tt roberto.munoz@upm.es}}
\address{\hskip -.43cm Jos\'e Carlos Sierra, Departamento de Matem\'aticas Fundamentales, Facultad de Ciencias, UNED,  C/ Juan del Rosal 10, 28040 Madrid, Spain. e-mail {\tt jcsierra@mat.uned.es}}
\thanks{* Research partially supported by  PRIN ``Advances in Moduli Theory and Birational Classification'' and GNSAGA-INdAM}
\thanks{{\it Mathematics Subject Classification} : Primary 14J35. Secondary 14J60.}
\begin{document} 

\begin{abstract} 
We give an almost complete classification of non-big Ulrich vector bundles on fourfolds. This allows us to classify them in the case of Picard rank one fourfolds, of Mukai fourfolds and in the case of del Pezzo $n$-folds for $n \le 4$. We also classify Ulrich bundles with non-big determinant on del Pezzo and Mukai $n$-folds, $n \ge 2$.
\end{abstract}

\maketitle

\section{Introduction}

Let $X \subseteq \P^N$ be a smooth irreducible complex variety of dimension $n \ge 1$. A vector bundle $\E$ on $X$ is Ulrich if $H^i(\E(-p))=0$ for all $i \ge 0$ and $1 \le p \le n$. We refer for example to \cite{es, b1, cmp} and references therein for the importance of Ulrich vector bundles and the relation with properties of $X$. 

One often useful geometrical consequence of the existence of a non-big Ulrich vector bundle $\E$ on $X$ is that $X$ is covered by linear spaces, as shown in \cite[Thm.~2]{ls}. This property gives very strong conditions on the geometry of $X$ especially in low dimension, thus allowing the classification of non-big Ulrich vector bundles for $n \le 3$, achieved in \cite{lo, lm}. On the other hand, already for $n=4$, the classification of varieties covered by lines is far more incomplete. For example, for $n=4$, one can have scrolls $X \to Y$ which are a projective bundle only over a proper open subset of a threefold $Y$. Nevertheless, denoting by $H$ a hyperplane section of $X$, observe that still a lot of information on the pair $(X,H)$ is available in dimension $n \ge 2$. We have (see \cite{bs2} and \S \ref{adj}) the nef value $\tau = \tau(X,H)$ and the nef value morphism
$$\phi_{\tau} =  \phi_{\tau}(X,H):= \varphi_{m(K_X+\tau H)} : X \to X'.$$ 
Classical adjunction theory allows to divide the pairs $(X,H)$ into different cases, depending on the behaviour of $\phi_{\tau}$ (this is especially useful for $n=4$, see Lemma \ref{aggiu}). In this paper we study this morphism in the presence of a non-big rank $r$ Ulrich vector bundle $\E$, as follows. We have a morphism $\Phi : X \to {\mathbb G}(r-1, \P H^0(\E))$ and its lift with connected fibers in the Stein factorization $\widetilde \Phi : X \to \widetilde{\Phi(X)}$. When $c_1(\E)^n = 0$, we first prove a useful technical result, the Dichotomy Lemma (see Lemma \ref{dico}), that allows to compare the fibers of $\Phi$ (or of $\widetilde \Phi$) and of a given morphism starting from $X$. A nice application of it is given in two standard cases arising in adjunction theory, a quadric fibration in Proposition \ref{qf} and a blow-up of a point in Proposition \ref{caso(e)}. Moreover when $n=4$, applying the Dichotomy Lemma to $\phi_{\tau}$ in several cases, together with the results in \cite{ls} and \cite{lms} in the case $c_1(\E)^n > 0$, we get a pretty complete classification of non-big Ulrich vector bundles, as stated below. In many cases we have a linear Ulrich triple, in the sense of Definition \ref{not4}. The cases in the theorem below are also listed in Tables \ref{tab1} and \ref{tab2} at the end of the paper.

\begin{bthm}
\label{main3}

\hskip 3cm

Let $X \subseteq \P^N$ be a smooth irreducible variety of dimension $4$ and let $\E$ be a rank $r$ vector bundle on $X$. If $\E$ is Ulrich not big then $(X,\O_X(1),\E)$ is one of the following.

If $c_1(\E)^4 = 0$:
\begin{itemize}
\item [(i)] $(\P^4, \O_{\P^4}(1), \O_{\P^4}^{\oplus r})$.
\item [(ii1)] $(X,\O_X(1),\E)\cong(\P^1 \times \P^3, \O_{\P^1}(1) \boxtimes \O_{\P^3}(1), p^*(\O_{\P^3}(1))^{\oplus r})$, where $p=\Phi: \P^1 \times \P^3 \to \P^3$ is the second projection.
\item [(ii2)] $(X,\O_X(1),\E)$ is a linear Ulrich triple with $p = \phi_{\tau} = \widetilde \Phi$ and $b=1$. In particular $\E$ is pull-back of a twisted Ulrich vector bundle on $B$.
\item [(iii)] $(\P^2 \times \P^2, \O_{\P^2}(1) \boxtimes \O_{\P^2}(1), p^*(\O_{\P^2}(2))^{\oplus r})$, where $p: \P^2 \times \P^2 \to \P^2$ is one of the two projections.
\item [(iv)] $(\P^1 \times Q, \O_{\P^1}(1) \boxtimes \O_Q(1), p^*(\mathcal S(1))^{\oplus (\frac{r}{2})})$, where $p : \P^1 \times Q \to Q=Q_3$ is the second projection.
\item [(v1)] $(X,\O_X(1))$ is a hyperplane section of $\P^2 \times \P^3$ under the Segre embedding and $\E \cong q^*(\O_{\P^3}(2))^{\oplus r}$, where $q : X \to \P^3$ is the restriction of the second projection.
\item [(v2)] $(X,\O_X(1),\E)$ is a linear Ulrich triple with $p = \phi_{\tau} = \widetilde \Phi$ and $b=2$.
\item [(v3)] $(X,\O_X(1),\E)$ is a linear Ulrich triple with $p = \widetilde \Phi, b=3$ and $X$ has a morphism to a smooth curve with all fibers $\P^1 \times \P^2$ embedded by the Segre embedding.
\item [(vi1)] $(X,\O_X(1))$ is a del Pezzo fibration over a smooth curve with every fiber the blow-up of $\P^3$ in a point  and $(X,\O_X(1),\E)$ is a linear Ulrich triple with $p = \widetilde \Phi : \P(\F) \to B, b=3, \phi_{\tau} = h \circ p$ where $h : B \to X'$ and $(B, \det \F)$ is a del Pezzo fibration over $X'$ with every fiber $\P^2$.
\item [(vi2)] $(X,\O_X(1))$ is a del Pezzo fibration over a smooth curve with smooth fibers $\P^1 \times \P^1 \times \P^1$, singular fibers $\P^1 \times \mathcal Q$, where $\mathcal Q \subset \P^3$ is a quadric cone and $(X,\O_X(1),\E)$ is a linear Ulrich triple with $p = \widetilde \Phi : \P(\F) \to B, b=3, \phi_{\tau} = h \circ p$ where $h : B \to X'$ and $(B, \det \F)$ is a del Pezzo fibration over $X'$ with smooth fibers $\P^1 \times \P^1$ and singular fibers $\mathcal Q$.
\item [(vi3)] $(X,\O_X(1))$ is a del Pezzo fibration over a smooth curve with smooth fibers $\P(T_{\P^2})$, singular fibers the tautological image of $\P(\F)$, where $\F = \O_{\mathbb F_1}(C_0+f) \oplus \O_{\mathbb F_1}(C_0+2f)$, that is a hyperplane section of the Segre embedding $\P^2 \times \P^2 \subset \P^8$. Also $\phi_{\tau} = h \circ \widetilde \Phi$ where $h : \widetilde{\Phi(X)} \to X'$ is a fibration with general fiber $\P^2$. In particular $\E_{|\P(T_{\P^2})}$ is pull-back of a vector bundle on $\P^2$.
\item [(vii)] $(X,\O_X(1))$ is a quadric fibration with equidimensional fibers over a smooth surface and $(X,\O_X(1),\E)$ is a linear Ulrich triple with $p = \widetilde \Phi, b=3$, $\phi_{\tau}$ factorizes through $\widetilde \Phi$ and every fiber of $\phi_{\tau}$ is a disjoint union of linear spaces.
\item [(viii)] $(X,\O_X(1))$ is a linear $\P^1$-bundle over a smooth threefold, $(X,\O_X(1),\E)$ is a linear Ulrich triple with $p = \phi_{\tau} = \widetilde \Phi$ and $b=3$.
\item [(ix)] $(X,\O_X(1))$ is a scroll over a normal threefold with non-equidimensional fibers and $\phi_{\tau}$ factorizes through $\widetilde \Phi$ via a generically finite degree $1$ map and every fiber of $\phi_{\tau}$ is a disjoint union of linear spaces. In particular, on the general fiber $\P^1$ of $\phi_{\tau}$, we have that $\E_{|\P^1}$ is trivial.
\item [(x1)] $(\P^1 \times M, \O_{\P^1}(1) \boxtimes L, p^*(\G(L)))$, where $M$ is a Fano $3$-fold of index $2$, $K_M=-2L$, $p$ is the second projection and $\G$ is a rank $r$ Ulrich vector bundle for $(M, L)$.
\item [(x2)] $(\P^1 \times \P(T_{\P^2}), \O_{\P^1}(1) \boxtimes \O_{\P(T_{\P^2})}(1), p^*(\G \otimes (\O_{\P^1}(2)\boxtimes \O_{\P^2}(3))))$, where $p :  X \cong \P(\O_{\P^1}(1)\boxtimes T_{\P^2}) \to \P^1 \times \P^2$ is the projection map and $\G$ is a rank $r$ vector bundle on $\P^1 \times \P^2$ such that $H^j(\G \otimes S^k (\O_{\P^1}(-1)\boxtimes \Omega_{\P^2}))=0$ for $j \ge 0, 0 \le k \le 2$.
\item [(x3)] $X$ is a hyperplane section of $\P^2 \times Q_3$ under the Segre embedding, $\O_X(1) = (\O_{\P^2}(1) \boxtimes \O_{Q_3}(1))_{|X}$ and $\E \cong q^*(\mathcal S(2))^{\oplus (\frac{r}{2})}$, where $q : X \to Q_3$ is the restriction of the second projection.
\item [(x4)] $X$ is twice a hyperplane section of $\P^3 \times \P^3$ under the Segre embedding, $\O_X(1) = (\O_{\P^3}(1) \boxtimes \O_{\P^3}(1))_{|X}$ and $\E \cong q^*(\O_{\P^3}(3))^{\oplus r}$, where $q : X \to \P^3$ is the restriction of one of the two projections.
\item [(x5)] $X \cong \P(\mathcal S)$ where $\mathcal S$ is the spinor bundle on $Q_3$, $\O_X(1) \cong \O_{\P(\mathcal S)}(1)\otimes p^*(\O_{Q_3}(1))$ and $\E \cong p^*(\G(3))$, where $p: X \to Q_3$ is the projection and $\G$ is a rank $r$ vector bundle on $Q_3$ such that $H^j(\G(-2k) \otimes S^k \mathcal S)=0$ for $j \ge 0, 0 \le k \le 2$.
\end{itemize}

If $c_1(\E)^4 > 0$:
\begin{itemize}
\item [(xi)] $(Q_4, \O_{Q_4}(1))$ and $\E \cong \mathcal S', \mathcal S'', \mathcal S' \oplus \mathcal S''$, where $\mathcal S', \mathcal S''$ are the spinor bundles on $Q_4$.
\item [(xii)] $(X,\O_X(1))$ is a linear $\P^3$-bundle over a smooth curve $p: X \to B$ and on any fiber $f$ of $p$, $\E_{|f}$ is either $T_{\P^3}(-1)\oplus \O_{\P^3}^{\oplus (r-3)}$ or $\Omega_{\P^3}(2) \oplus \O_{\P^3}^{\oplus (r-3)}$ or $\N(1) \oplus \O_{\P^3}^{\oplus (r-2)}$, where $\N$ is a null-correlation bundle or is a quotient of type $0 \to \O_{\P^3}(-1)^{\oplus 2} \to \O_{\P^3}^{\oplus (r+2)} \to \E_{|f} \to 0$.
\item [(xiii)] $(X,\O_X(1))$ is a linear $\P^2$-bundle over a smooth surface $p: X \to B$ and on any fiber $f$ of $p$, $\E_{|f} \cong T_{\P^2}(-1)\oplus \O_{\P^2}^{\oplus (r-2)}$.
\item [(xiv)] $(X,\O_X(1))$ is a quadric fibration over a smooth curve $p: X \to B$ and $\E$ is a rank $2$ relative spinor bundle, that is $\E_{|f}$ is a spinor bundle on a general fiber $f$ of $p$.
\end{itemize}
Vice versa all $\E$ in (i)-(xi), with the exceptions of (vi3) and (ix), are Ulrich not big.
\end{bthm}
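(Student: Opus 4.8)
The plan is to prove the two directions separately, beginning with the harder forward classification. First I would split according to whether $c_1(\E)^4>0$ or $c_1(\E)^4=0$. In the big-determinant range $c_1(\E)^4>0$ the geometry is already severely constrained, and I would invoke the results of \cite{ls} and \cite{lms} directly: these pin down $(X,\O_X(1))$ as the quadric $Q_4$, a linear $\P^3$- or $\P^2$-bundle over a low-dimensional base, or a quadric fibration over a curve, together with the admissible restrictions of $\E$ to the fibers, thereby producing cases (xi)--(xiv). The genuine work lies in the non-big range $c_1(\E)^4=0$.

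For $c_1(\E)^4=0$ the strategy is to read off the geometry of $X$ from classical adjunction theory and then transport it, through the Dichotomy Lemma \ref{dico}, to the morphism $\widetilde\Phi$ attached to $\E$. Since $\E$ is Ulrich and not big, \cite[Thm.~2]{ls} guarantees that the fibers of $\widetilde\Phi$ are linear spaces covering $X$, so $\widetilde\Phi$ is a generically linear fibration of some fiber dimension $b$. I would then run through the adjunction-theoretic alternatives for the nef value morphism $\phi_\tau=\phi_\tau(X,H)$ supplied by Lemma \ref{aggiu}: $X$ Fano of large index, a $\P^k$-bundle over a curve, surface or threefold, a quadric fibration over a curve or surface, a Del Pezzo fibration over a curve, a blow-up of a point, and the non-equidimensional scroll cases. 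In each alternative the Dichotomy Lemma compares the fibers of $\widetilde\Phi$ with those of $\phi_\tau$, forcing either $\phi_\tau=\widetilde\Phi$ or a factorization $\phi_\tau=h\circ\widetilde\Phi$ (respectively $h\circ p$); this is exactly the dichotomy recorded in items (ii2), (v2), (vi1)--(vi3), (vii), (viii), (ix). The two standard sub-cases are dispatched by the already-proved Proposition \ref{qf} (quadric fibration) and Proposition \ref{caso(e)} (blow-up of a point). Matching the value of $b$, the fiber type and the factorization pattern of $\phi_\tau$ then produces the explicit triple in each item, the product cases $\P^1\times(\cdot)$ in (ii1), (iv), (x1)--(x2) arising when $\widetilde\Phi$ splits off a $\P^1$-factor.

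For the converse I would check both conditions in each of (i)--(xi). Non-bigness means that $\O_{\P(\E)}(1)$ is not big, i.e.\ its tautological map has image of dimension $<\dim\P(\E)=4+r-1$. The abstract items (ii2), (v2), (vi1)--(vi3), (vii), (viii), (ix) are Ulrich and non-big by the very notion of a linear Ulrich triple (Definition \ref{not4}). In the remaining explicit cases (i), (ii1), (iii), (iv), (v1), (x1)--(x5) each $\E$ is a pullback $q^*\F$ (resp.\ $p^*\F$) from a base of dimension $\le 3$, so $\P(\E)$ maps into $\P(\F)$ of strictly smaller dimension and $\E$ is not big; equivalently $c_1(\E)^4=0$. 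In case (xi) one verifies directly from the geometry of the spinor bundles on $Q_4$ that the tautological map of $\mathcal S'$ (and of $\mathcal S''$, $\mathcal S'\oplus\mathcal S''$) has image of dimension $<4+r-1$, so these are non-big even though $c_1(\E)^4>0$. The Ulrich condition $H^i(\E(-p))=0$ for all $i$ and $1\le p\le 4$ is then checked by cohomology: in the product and pullback cases the K\"unneth formula reduces the vanishing to the Ulrich property of the building blocks (spinor bundles on $Q_3$, Ulrich bundles $\G$ on the Fano threefold factors, etc.)\ already known in dimension $\le 3$, while the hyperplane-section cases (v1), (x3), (x4) follow by restricting from the ambient Segre product through the exact sequence of a hyperplane section.

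The main obstacle I anticipate is the cluster of Del Pezzo fibration cases (vi1)--(vi3) together with the non-equidimensional scroll case (ix). There $\phi_\tau$ neither equals $\widetilde\Phi$ nor is a clean projective bundle, so one cannot simply read off $X$ from a bundle structure; instead one must combine the Dichotomy Lemma with a fine analysis of the singular fibers of the Del Pezzo (or quadric) fibration and control the intermediate variety $B$ (or $\widetilde{\Phi(X)}$) and the fibration $h$. Keeping careful track of which linear spaces occur as fibers of $\widetilde\Phi$ versus as components of fibers of $\phi_\tau$, and ruling out the spurious configurations, is where the argument will be most delicate.
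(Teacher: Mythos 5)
Your overall architecture does match the paper's (splitting by $c_1(\E)^4$, coveredness by lines via \cite[Thm.~1]{lo}, the adjunction list of Lemma \ref{aggiu}, the Dichotomy Lemma \ref{dico} played against $\phi_{\tau}$, Propositions \ref{qf} and \ref{caso(e)}, and the converse via pullbacks plus K\"unneth), but there is a genuine gap at the heart of the $c_1(\E)^4=0$ half: the Fano cases. When $K_X=-3H$ (case (c.1)) and especially when $K_X=-2H$ (case (d.1), the Mukai fourfolds), the nef value morphism $\phi_{\tau}$ is the constant map, so your central device --- ``the Dichotomy Lemma compares the fibers of $\widetilde\Phi$ with those of $\phi_{\tau}$, forcing $\phi_{\tau}=\widetilde\Phi$ or a factorization'' --- produces nothing: case (fact) holds vacuously and there is no fibration structure to transport. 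Yet these are exactly the cases producing items (iii) and (x1)--(x5), and they occupy the bulk of the paper's proof. There one needs $\rho(X)\ge 2$ and the classification of Del Pezzo fourfolds to land on $\P^2\times\P^2$ (then \cite[Thm.~3]{lms}), and for Mukai fourfolds one must go through Mukai's classification \cite[Thm.~7]{mu} family by family, with arguments independent of $\phi_{\tau}$: Lefschetz plus intersection-theoretic computations forcing $\det\E$ to be a pullback, \cite[Lemma 5.1]{lo}, K\"unneth and Leray manipulations, a Beilinson spectral sequence to exclude one family, and a Bogomolov-inequality computation on a K3 hyperplane section to exclude $\P(\N)$ over $\P^3$ in another. (The Dichotomy Lemma is still useful there, but applied to auxiliary morphisms --- blow-downs, projections, the two $\P^1$-bundle structures --- not to $\phi_{\tau}$.) Your only remark in this direction, that the product cases arise ``when $\widetilde\Phi$ splits off a $\P^1$-factor,'' is not a method, and in any event (x3)--(x5) are not of product type.

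A second, smaller gap: for $c_1(\E)^4>0$ you cannot simply ``invoke the results of \cite{ls} and \cite{lms} directly.'' Those references supply tools (\cite[Cor.~2]{ls}, \cite[Lemmas 4.2 and 4.4]{lms}, \cite[Prop.~3.3(iii)]{lms}), not the classification. The paper must still combine them with \cite[Thm.~1.4]{lp} and adjunction to reduce to cases (b.1), (b.2), (c.2), (c.3) or a Picard-rank-one Del Pezzo fourfold; must verify hypothesis \eqref{cos} of Lemma \ref{nu=b+r} (Claim \ref{sato}, via the second fundamental form and Sato's theorems \cite{sa1,sa2}); must treat $\nu(\E)=r+2$ on $\P^3$-bundles by a Severi-type scroll argument together with \cite[Thm.~1]{su}; must exclude the rank-one Del Pezzo fourfolds by ad hoc Chern and Segre class computations (e.g.\ on ${\mathbb G}(1,4)\cap H_1\cap H_2$ via \cite{ac} and \cite{pz}); and must prove the relative-spinor statement in (xiv) by a uniform-bundle argument on the fibers of the quadric fibration. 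So while your converse direction is sound and your plan for the fibration cases (ii), (v)--(ix) tracks the paper, the proposal as written would stall precisely where the theorem's hardest content lies.
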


Note that almost all cases in Theorem \ref{main3} are possible, see Examples \ref{altro}-\ref{settimo}. The exceptions are case (vi1) and case (xii) with restriction $\N(1) \oplus \O_{\P^3}^{\oplus (r-2)}$. Moreover, in some cases, namely (vi3) and (ix), the information obtained on $\E$ is not complete, we can only obtain a factorization of $\phi_{\tau}$ through $\widetilde \Phi$.
 
As a consequence of Theorem \ref{main3}, we can classify completely non-big Ulrich vector bundles in the following classes of varieties.

\begin{bcor}
\label{main4}

\hskip 3cm

Let $X \subseteq \P^N$ be a smooth irreducible variety of dimension $2 \le n \le 4$ and let $\E$ be a rank $r$ vector bundle on $X$. We have:
\begin{itemize}
\item [(i)] If $\rho(X) =1$, then $\E$ is Ulrich not big if and only if $(X,\O_X(1),\E)$ is one of the following:
\begin{itemize}
\item [(i1)] $(\P^n, \O_{\P^n}(1), \O_{\P^n}^{\oplus r})$.
\item [(i2)] $(Q_n, \O_{Q_n}(1))$ and $\E$ is one of $(\mathcal S')^{\oplus r}, (\mathcal S'')^{\oplus r}$ for $n=2$, $\mathcal S$ for $n=3$ and as in (xi) of Theorem \ref{main3} for $n=4$.
\end{itemize}
\item [(ii)] If $(X, \O_X(1))$ is a del Pezzo variety, then $\E$ is Ulrich not big if and only if $(X,\E)$ is one of the following:
\begin{itemize}
\item [(ii1)] $(\P^1 \times \P^1 \times \P^1, \O_{\P^1}(1) \boxtimes \O_{\P^1}(1) \boxtimes \O_{\P^1}(1), q^*((\O_{\P^1}(1) \boxtimes \O_{\P^1}(2))^{\oplus s} \oplus (\O_{\P^1}(2) \boxtimes \O_{\P^1}(1))^{\oplus (r-s)})$ for $0 \le s \le r$ and $q : \P^1 \times \P^1 \times \P^1 \to \P^1 \times \P^1$ is one of the three projections.
\item [(ii2)] $(\P(T_{\P^2}),  \O_{\P(T_{\P^2})}(1), p^*(\O_{\P^2}(2))^{\oplus r})$, where $p: \P(T_{\P^2}) \to \P^2$ is one of the two projections.
\item [(ii3)] $(\P^2 \times \P^2, \O_{\P^2}(1) \boxtimes \O_{\P^2}(1), p^*(\O_{\P^2}(2))^{\oplus r})$, where $p: \P^2 \times \P^2 \to \P^2$ is one of the two projections.
\end{itemize}
\item [(iii)] If $(X, \O_X(1))$ is a Mukai variety, then $\E$ is Ulrich not big if and only if $(X,\E)$ is as in (x1)-(x5) of Theorem \ref{main3}.
\end{itemize}
\end{bcor}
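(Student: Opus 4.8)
The plan is to deduce Corollary~\ref{main4} from Theorem~\ref{main3} together with the classifications of non-big Ulrich bundles on surfaces and threefolds from \cite{lo} and \cite{lm}. In all three parts the ``if'' direction is already available: it is the converse assertion of Theorem~\ref{main3} for the cases (i)--(xi) and the analogous converses proved in \cite{lo,lm}. Thus the substance lies entirely in the ``only if'' direction, which amounts to selecting, from the master classification, exactly those triples $(X,\O_X(1),\E)$ whose underlying polarized variety lies in the prescribed class. I would therefore organise the argument by dimension, using Theorem~\ref{main3} for $n=4$ and \cite{lo,lm} for $n=2,3$, and on each resulting list impose the defining numerical condition of the class.

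For part (i) the selection is by Picard number. Every entry built from products, scrolls, projective bundles, or fibrations over a base of positive dimension has $\rho(X)\ge 2$, so in dimension four the only survivors are $\P^4$ (case (i)) and $Q_4$ (case (xi)). In dimensions two and three the lists of \cite{lo,lm} then single out $\P^n$ with the trivial bundle and the quadric $Q_n$ with sums of spinor bundles, yielding (i1)--(i2), the quadric surface being recorded for uniformity across dimensions.

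For parts (ii) and (iii) the selection is by canonical class, a Del Pezzo variety being characterised by $-K_X=(n-1)\O_X(1)$ and a Mukai variety by $-K_X=(n-2)\O_X(1)$. I would run through the master list and retain exactly the entries satisfying the appropriate equality \emph{for the given polarization}. In dimension four a direct adjunction computation shows that the unique Del Pezzo case is the Segre-polarized $\P^2\times\P^2$, for which $-K_X=3\,\O_X(1)$, giving (ii3); while the index-two (Mukai) fourfolds are exactly (x1)--(x5), the equality $-K_X=2\,\O_X(1)$ being read off the product or projective-bundle structure in (x1)--(x2) and obtained from adjunction in the hyperplane-section cases (x3)--(x5). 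The remaining Del Pezzo cases (ii1)--(ii2) are the degree-six Del Pezzo threefolds $\P^1\times\P^1\times\P^1$ and $\P(T_{\P^2})$, read off the threefold classification of \cite{lm}; and one checks, via \cite{lo,lm} and the definition of Mukai variety, that dimensions two and three contribute nothing to the Mukai list.

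The step I expect to be the main obstacle is precisely this canonical-class bookkeeping, and above all the verification that the polarization $\O_X(1)$ occurring in the classification really is the Del Pezzo (resp.\ Mukai) polarization $-K_X/(n-1)$ (resp.\ $-K_X/(n-2)$) and not some other multiple. This is the delicate point reconciling parts (i) and (ii): the quadric surface $Q_2\cong\P^1\times\P^1$ and the plane $\P^2$ are Del Pezzo surfaces, yet their non-big Ulrich bundles (the spinor bundles, resp.\ the trivial bundle) are Ulrich only for the minimal embeddings $\O(1,1)$ and $\O(1)$, which are \emph{not} the Del Pezzo polarizations $-K=\O(2,2)$ and $-K=\O(3)$. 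Hence these surfaces belong to part (i) but contribute nothing to part (ii), and it is the careful, polarization-sensitive matching across all cases --- excluding spurious entries while omitting none --- that gives the ``only if'' direction its real content.
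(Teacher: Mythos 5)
Your overall strategy (select from the master classifications of \cite{lo,lm} and Theorem \ref{main3} by Picard number, respectively by the Del Pezzo/Mukai condition on $K_X$) is the same as the paper's, and it does work for part (i), for the Mukai part (iii) (where the paper argues even more simply: Mukai $n$-folds with $n\le 3$ are not covered by lines, hence carry no non-big Ulrich bundles, so only $n=4$ and cases (x1)--(x5) survive), and for the Del Pezzo cases in dimensions $2$ and $4$ (in dimension $4$ the only Del Pezzo entry of Theorem \ref{main3} is indeed the explicit case (iii)). The genuine gap is in the Del Pezzo threefold case. By the classification of Del Pezzo $3$-folds, once one knows $c_1(\E)^3=0$, $c_1(\E)^2\ne 0$ (which forces $\rho(X)\ge 2$), there are \emph{three} candidates: $\P^1\times\P^1\times\P^1$, $\P(T_{\P^2})$, and $V_7$, the blow-up of $\P^3$ at a point. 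Your proposal never mentions $V_7$, and it cannot be disposed of by ``reading off'' the threefold classification of \cite{lm}: in that classification the relevant case is structural --- $X$ a linear $\P^1$-bundle over a surface with $\E\cong p^*(\G(\det\F))$ subject to cohomological vanishings --- and $V_7\cong\P(\O_{\P^2}(1)\oplus\O_{\P^2}(2))$ is exactly such a bundle over $\P^2$, so it sits squarely inside that case as a candidate.

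Excluding $V_7$ is a substantive step, and the paper devotes a computation to it: writing $\det\E=a\widetilde H+bE$, the condition $c_1(\E)^3=0$ gives $a^3+b^3=0$, hence $\det\E=a(\widetilde H-E)$; since $|\widetilde H-E|$ induces the projection $p:V_7\cong\P(\O_{\P^2}\oplus\O_{\P^2}(1))\to\P^2$, one gets $\E\cong p^*(\G(2))$ by \cite[Lemma 5.1]{lo}, and then \cite[Lemma 4.1]{lo} forces $H^i(\G(-s))=0$ for $i\ge 0$, $0\le s\le 3$; in particular $\G$ is Ulrich on $(\P^2,\O_{\P^2}(1))$, so $\G\cong\O_{\P^2}^{\oplus r}$, contradicting $H^2(\O_{\P^2}(-3))\ne 0$. (Alternatively one could invoke Proposition \ref{caso(e)} of the paper with $n=3$, but some such argument is indispensable.) Without this exclusion your ``only if'' direction in part (ii) for $n=3$ is incomplete: the polarization-sensitive bookkeeping you rightly emphasize for surfaces is the easy part, whereas the real content in dimension $3$ is showing that the third Del Pezzo candidate supports no non-big Ulrich bundle at all.
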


Moreover, when $\det \E$ is not big, we can classify Ulrich vector bundles on del Pezzo or Mukai $n$-folds.

\begin{bcor}
\label{main5}

\hskip 3cm

Let $X \subseteq \P^N$ be a smooth irreducible variety of dimension $n \ge 2$ and let $\E$ be a rank $r$ vector bundle on $X$ such that $\det \E$ is not big. We have:
\begin{itemize}
\item [(i)] If $(X, \O_X(1))$ is a del Pezzo $n$-fold, then $\E$ is Ulrich if and only if $(X,\E)$ is as in (ii1)-(ii3) of Corollary \ref{main4}.
\end{itemize}
\begin{itemize}
\item [(ii)] If $(X, \O_X(1))$ is a Mukai $n$-fold, then $\E$ is Ulrich if and only if $(X,\E)$ is either as in (x1)-(x5) of Theorem \ref{main3} or is one of the following:
\begin{itemize}
\item [(ii1)] $(\P^3 \times \P^3, p^*(\O_{\P^3}(3))^{\oplus r})$, where $p: \P^3 \times \P^3 \to \P^3$ is one of the two projections.
\item [(ii2)] $(\P(T_{\P^3}), p^*(\O_{\P^3}(3))^{\oplus r})$, where $p: \P(T_{\P^3}) \to \P^3$ is one of the two projections.
\item [(ii3)] $(\P^2 \times Q_3, q^*(\mathcal S(2))^{\oplus (\frac{r}{2})})$, where $q : \P^2 \times Q_3 \to Q_3$ is the second projection.
\end{itemize}
\end{itemize}
\end{bcor}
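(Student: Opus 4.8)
The plan is to reduce to the $c_1(\E)^4=0$ part of Theorem \ref{main3} and to Corollary \ref{main4}, after converting the hypothesis on $\det\E$ into the two facts I actually use. Since $\E$ is Ulrich it is globally generated, so $\det\E$ is globally generated, in particular nef; a nef line bundle is big exactly when its top self-intersection is positive, so ``$\det\E$ not big'' is equivalent to $c_1(\E)^n=0$. Moreover, writing $\Phi:X\to\mathbb{G}(r-1,\P H^0(\E))$ for the map attached to the global generation, one has $\det\E=\Phi^*\O(1)$ for the Plücker polarization, so $c_1(\E)^n=0$ forces $\dim\Phi(X)<n$; the fibres of $\Phi$ are then positive dimensional, $\E$ is trivial along them, and hence $\E$ is \emph{not big}. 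Thus the hypothesis places us in the first half of Theorem \ref{main3}, with the numerical constraint $c_1(\E)^n=0$ in force.

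Next I would dispose of Picard number one: if $\rho(X)=1$ then $\det\E=c\,\O_X(1)$ with $c>0$ (the Ulrich condition forces $c_1(\E)\cdot H^{n-1}>0$), so $\det\E$ is ample and big, contrary to hypothesis. Hence no examples occur and I may assume $\rho(X)\ge2$. By the Fujita--Mukai classification, a Del Pezzo manifold of dimension $\ge5$ and a Mukai manifold of dimension $\ge7$ both have Picard number one; combined with the previous exclusion, any $X$ carrying such a bundle has dimension $\le4$ in the Del Pezzo case and $\le6$ in the Mukai case.

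For $\dim X\le4$ I feed $X$ into Corollary \ref{main4}: part (ii) returns exactly (ii1)--(ii3) for Del Pezzo varieties and part (iii) returns (x1)--(x5) for Mukai fourfolds (in particular Del Pezzo surfaces and the remaining Del Pezzo threefolds carry no such bundle). The genuine content is therefore the Mukai manifolds of dimension $5$ and $6$ with $\rho\ge2$; the classification leaves $\P(T_{\P^3})$ and $\P^2\times Q_3$ in dimension $5$ and $\P^3\times\P^3$ in dimension $6$. For each of these I argue directly: $X$ carries a natural projection onto a lower-dimensional factor, and I apply the Dichotomy Lemma \ref{dico} to $\Phi$ (or its Stein factorisation $\widetilde\Phi$) and this projection. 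Using $c_1(\E)^n=0$ the Lemma identifies the fibres of $\widetilde\Phi$ with those of the projection, so $\E$ descends to the base; matching ranks and twists against the residual Ulrich condition then forces $\E$ to be, respectively, $p^*(\O_{\P^3}(3))^{\oplus r}$ on $\P(T_{\P^3})$, $q^*(\mathcal S(2))^{\oplus(r/2)}$ on $\P^2\times Q_3$, and $p^*(\O_{\P^3}(3))^{\oplus r}$ on $\P^3\times\P^3$, which are cases (ii2), (ii3) and (ii1). The converse, that each listed bundle is Ulrich with $c_1(\E)^n=0$, is a direct cohomology computation on the product or projective bundle.

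The main obstacle I anticipate is exactly this direct treatment of the three higher-dimensional Mukai manifolds, where Theorem \ref{main3} is unavailable. The delicate point is checking the hypotheses of the Dichotomy Lemma for the chosen projection and then excluding that $\Phi$ could be strictly finer than it---this is where $c_1(\E)^n=0$ and the covering-by-linear-spaces property of \cite{ls} have to be used together. Once the fibres are matched, the bookkeeping that pins down the precise twist ($\O_{\P^3}(3)$, resp. $\mathcal S(2)$) and the rank pattern is routine, but it must be done carefully so that the rank-two spinor factor on $\P^2\times Q_3$ yields multiplicity $r/2$ rather than $r$.
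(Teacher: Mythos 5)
There is a genuine gap in your treatment of the Mukai case in dimensions $5$ and $6$: your enumeration of Mukai manifolds with $\rho\ge 2$ is incomplete. By Mukai's classification \cite[Thm.~7]{mu}, the Mukai $n$-folds with $n\ge 5$ and $\rho(X)\ge 2$ are \emph{four}: $\P(T_{\P^3})$, $\P^2\times Q_3$, the blow-up of $\P^5$ along a line (all of dimension $5$), and $\P^3\times\P^3$ (dimension $6$). You list only the first, second and fourth. The blow-up of $\P^5$ along a line does not appear in the statement of Corollary \ref{main5} precisely because it must be \emph{excluded}, and this exclusion is a substantial piece of the paper's proof: writing $\det\E=a\widetilde H+bE$, the condition $c_1(\E)^5=0$ forces $\det\E=a(\widetilde H-E)$; then the second bundle structure $X\cong\P(\O_{\P^3}^{\oplus 2}\oplus\O_{\P^3}(1))\to\P^3$ and \cite[Lemma 5.1]{lo} descend $\E$ to a bundle $\G$ on $\P^3$, the Ulrich vanishings show $\G$ would have to be Ulrich for $(\P^3,\O_{\P^3}(1))$, hence trivial by Remark \ref{kno1}, and this contradicts a further required vanishing. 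Without this argument your proof does not establish the corollary: a priori this fourth variety could carry an Ulrich bundle with non-big determinant outside your list.

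A secondary point: for the varieties you do treat, the paper does not use the Dichotomy Lemma \ref{dico} at all. It computes in the Picard lattice: $c_1(\E)^n=0$, expanded in a basis of $\Pic(X)$ together with nefness of $\det\E$, forces $\det\E$ to be a pullback under one of the projections; then \cite[Lemma 5.1]{lo} descends $\E$ itself, and Ulrich vanishings combined with K\"unneth (or Leray) pin down the descended bundle via Remark \ref{kno1} or \cite[Lemma 3.2(iv)]{lms}. Note also that on $\P^2\times Q_3$ one of the two possible descents (the one to $\P^2$) must be ruled out by an explicit cohomological contradiction; your ``matching ranks and twists'' step conceals this necessary exclusion, whereas on $\P^3\times\P^3$ both projections genuinely occur. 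Your Dichotomy-Lemma route could plausibly be made to work for identifying the fibration, but as written it leaves both this exclusion and, more importantly, the missing fourth variety unaddressed.
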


\section{Notation and standard facts about (Ulrich) vector bundles}

Throughout this section we will let $X \subseteq \P^N$ be a smooth irreducible complex variety of dimension $n \ge 1$, degree $d$ and $H$ a hyperplane divisor on $X$.

\begin{defi}
We say that $(X, \O_X(1))$ as above is a {\it linear $\P^k$-bundle} over a smooth variety $B$ if $(X, \O_X(1))=(\P(\F), \O_{\P(\F)}(1))$, where $\F$ is a very ample vector bundle on $B$ of rank $k+1$.

We say that $(X, \O_X(1))$ as above is a {\it scroll (respectively a quadric fibration; respectively a del Pezzo fibration) over a normal variety $Y$ of dimension $m$} if there exists a surjective morphism with connected fibers  $\phi: X \to Y$ such that $K_X+(n-m+1)H=\phi^*\L$ (respectively $K_X+(n-m)H=\phi^*\L$; respectively $K_X+(n-m-1)H=\phi^*\L$), with $\L$ ample on $Y$.
\end{defi}

\begin{defi}
\label{spin}
For $n \ge 2$ we let $Q_n \subset \P^{n+1}$ be a smooth quadric. We let $S$ ($n$ odd), and $S', S''$ ($n$ even), be the vector bundles on $Q_n$, as defined in \cite[Def.~1.3]{o}. The {\it spinor bundles} on $Q_n$ are ${\mathcal S}={\mathcal S}_n = S(1)$ if $n$ is odd and ${\mathcal S}'={\mathcal S}'_n = S'(1)$, ${\mathcal S}''={\mathcal S}''_n = S''(1)$, if $n$ is even. They all have rank $2^{\lfloor \frac{n-1}{2} \rfloor}$.
\end{defi}

\begin{notation}
For $k \in \Z : 1 \le k \le n$ we denote by $F_k(X)$ the Fano variety of $k$-dimensional linear subspaces of $\P^N$ that are contained in $X$. For $x \in X$, we denote by $F_k(X,x) \subset F_k(X)$ the subvariety of $k$-dimensional linear subspaces passing through $x$.
\end{notation}

The following fact is well known (see for example \cite[Prop.s~2.2.1 and 2.3.9]{ru}) and will be often used without mentioning.
\begin{remark}
\label{russo}
Let $x \in X$ be a general point. Then $F_1(X,x)$ is smooth and $\dim_{[L]} F_1(X,x) = - K_X \cdot L -2$ for every $[L] \in F_1(X,x)$.
\end{remark}

\begin{defi}
Given a nef line bundle $\L$ on $X$ we denote by
$$\nu(\L) = \max\{k \ge 0: c_1(\L)^k \ne 0\}$$ 
the {\it numerical dimension} of $\L$.
\end{defi}
Recall that when $\L$ is globally generated $\nu(\L)$ is the dimension of the image of the morphism induced by $\L$.

\begin{defi}
\label{not}
Let $\E$ be a rank $r$ vector bundle on $X$. We denote by $c(\E)$ its Chern polynomial and by $s(\E)$ its Segre polynomial. We set $\P(\E) = \Proj(\Sym(\E))$ with projection map $\pi : \P(\E) \to X$ and tautological line bundle $\O_{\P(\E)}(1)$. We say that $\E$ is {\it nef (big, ample, very ample)} if $\O_{\P(\E)}(1)$ is nef (big, ample, very ample). If $\E$ is nef, we define the {\it numerical dimension} of $\E$ by $\nu(\E):=  \nu(\O_{\P(\E)}(1))$. When $\E$ is globally generated we define the map determined by $\E$ as
$$\Phi=\Phi_{\E} : X \to {\mathbb G}(r-1, \P H^0(\E)).$$
For any point $x \in X$ we will denote the fiber of $\Phi$ by
$$F_x = \Phi^{-1}(\Phi(x))$$ 
and we set $\phi(\E)$ for the dimension of the general fiber of $\Phi_{\E}$. Moreover, we set
$$\varphi=\varphi_{\E} = \varphi_{\O_{\P(\E)}(1)} : \P(\E) \to \P H^0(\E)$$
$$\Pi_y = \pi(\varphi^{-1}(y)), y \in \varphi(\P(\E))$$
and  
$$P_x = \varphi(\P(\E_x)).$$
\end{defi}
Note that $\Phi(x)= [P_x]$ is the point in  ${\mathbb G}(r-1,\P H^0(\E))$ corresponding to $P_x$.

We recall that, considering the map
$$\lambda_{\E} : \Lambda^r H^0(\E) \to H^0(\det \E)$$
one gets a commutative diagram
\begin{equation}
\label{muk}
\xymatrix{X \ar[d]^{\varphi_{|\Im \lambda_{\E}|}} \ar[r]^{\hskip -.9cm \Phi_{\E}} &  {\mathbb G}(r-1,\P H^0(\E)) \ar@{^{(}->}[d]^{P_{\E}} \\ \P \Im \lambda_{\E} \ar@{^{(}->}[r] & \P \Lambda^r H^0(\E)}
\end{equation}
where $P_{\E}$ is the Pl\"ucker embedding. In particular this implies that if $c_1(\E)^n=0$, then $\dim F_x \ge 1$ for every $x \in X$. We will often use this fact without further mentioning.

\begin{defi}
Let $\E$ be a vector bundle on $X \subseteq \P^N$. We say that $\E$ is an {\it Ulrich vector bundle} if $H^i(\E(-p))=0$ for all $i \ge 0$ and $1 \le p \le n$.
\end{defi}

The following properties will be often used without mentioning.

\begin{remark}
\label{gen}
Let $\E$ be a rank $r$ Ulrich vector bundle on $X \subseteq \P^N$ and let $d = \deg X$. Then
\begin{itemize}
\item [(i)] $\E$ is $0$-regular in the sense of Castelnuovo-Mumford, hence $\E$  is globally generated (by \cite[Thm.~1.8.5]{laz1}).
\item [(ii)] $h^0(\E)=rd$ (by \cite[Prop.~2.1]{es} or \cite[(3.1)]{b1}).
\item [(iii)] $\E$ is arithmetically Cohen-Macaulay (ACM), that is $H^i(\E(j))=0$ for $0 < i <n$ and all $j \in \Z$ (by \cite[Prop.~2.1]{es} or \cite[(3.1)]{b1}).
\item [(iv)] $\E_{|Y}$ is Ulrich on a smooth hyperplane section $Y$ of $X$ (by \cite[(3.4)]{b1}).
\end{itemize}
\end{remark}

\begin{remark}
\label{kno1}
On $(\P^n, \O_{\P^n}(1))$ the only rank $r$ Ulrich vector bundle is $\O_{\P^n}^{\oplus r}$ by \cite[Prop.~2.1]{es}, \cite[Thm.~2.3]{b1}. 
\end{remark}

We also collect here some properties that follow by \cite[Thm.~2]{ls}.

\begin{lemma}
\label{not2}
Let $\E$ be an Ulrich vector bundle on $X$. Then $F_x$ is a linear space contained in $X \subseteq \P^N$ for every $x \in X$. Moreover if 
\begin{equation}
\label{diag}
\xymatrix{X \ar[dr]_{\Phi} \ar[r]^{\hskip -.3cm \widetilde \Phi} & \widetilde{\Phi(X)} \ar[d]^g \\ & \Phi(X)}
\end{equation}
is the Stein factorization of $\Phi = \Phi_{\E}$, then, for every $x \in X$,
$$\widetilde F_x := \widetilde \Phi^{-1}(\widetilde \Phi(x))=F_x$$ 
and there is a vector bundle $\H$ on $\widetilde{\Phi(X)}$ such that $\E \cong \widetilde \Phi^*\H$.
\end{lemma}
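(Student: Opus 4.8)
The plan is to read off the three assertions from the evaluation presentation of $\E$ together with the linearity of the fibres provided by \cite[Thm.~2]{ls}, and then to produce $\H$ by a push-forward/base-change argument. By Remark \ref{gen}(i), $\E$ is globally generated, so the evaluation map fits into a short exact sequence
$$0 \to \K \to H^0(\E) \otimes \O_X \xrightarrow{\ev} \E \to 0$$
with $\K$ locally free, and $\Phi = \Phi_\E$ is exactly the induced morphism to ${\mathbb G}(r-1, \P H^0(\E))$, sending $x$ to the subspace $\K_x = \ker(H^0(\E) \to \E_x)$ (equivalently to the quotient $\E_x$, i.e. to $[P_x]$). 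First I would invoke \cite[Thm.~2]{ls} to get that, for each $x \in X$, the fibre $F_x$ is a linear subspace $\P^{\dim F_x} \subseteq \P^N$ contained in $X$; in particular $F_x$ is irreducible, hence connected. Next, since $\Phi$ is constant on $F_x$, the subspace $\K_{x'} \subseteq H^0(\E)$ is independent of $x' \in F_x$ and equal to a fixed $V := \K_x$; restricting the evaluation sequence to $F_x$ then gives the constant inclusion $V \otimes \O_{F_x} \hookrightarrow H^0(\E) \otimes \O_{F_x}$, whence
$$\E_{|F_x} \cong (H^0(\E)/V) \otimes \O_{F_x} \cong \O_{F_x}^{\oplus r}.$$
This records the triviality of $\E$ along every fibre of $\Phi$, which is the fact I will need for the descent.

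For the equality $\widetilde F_x = F_x$, recall that in the Stein factorization $\Phi = g \circ \widetilde\Phi$ the morphism $\widetilde\Phi : X \to \widetilde{\Phi(X)} =: Z$ has connected fibres and $g$ is finite, so the fibres of $\widetilde\Phi$ are precisely the connected components of the fibres of $\Phi$. As $F_x$ is a linear space it is connected, hence it is a single fibre of $\widetilde\Phi$, giving $\widetilde F_x = F_x$ for every $x$. To construct $\H$, note that $Z$ is normal (as $X$ is smooth) and that $\widetilde\Phi_*\O_X = \O_Z$. Since $\E_{|\widetilde F_x} = \E_{|F_x} \cong \O_{F_x}^{\oplus r}$ for every $x$, the function $z \mapsto h^0(\E_{|\widetilde\Phi^{-1}(z)})$ is constant equal to $r$ on $Z$; Grauert's base-change theorem then shows that $\H := \widetilde\Phi_* \E$ is locally free of rank $r$ and that its formation commutes with base change. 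The counit $\widetilde\Phi^* \H \to \E$ restricts on each fibre $F_x$ to the evaluation $H^0(\E_{|F_x}) \otimes \O_{F_x} \to \E_{|F_x}$, which is an isomorphism because $\E_{|F_x}$ is trivial; being an isomorphism on every fibre between locally free sheaves of the same rank, $\widetilde\Phi^*\H \to \E$ is an isomorphism, as desired.

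The substantial geometric content — that each $F_x$ is genuinely a linear subspace of $\P^N$ — is imported wholesale from \cite[Thm.~2]{ls}, so it is not the obstacle here. Within the present lemma the only point requiring care is the verification that $h^0$ is constant on \emph{all} fibres of $\widetilde\Phi$, including the possibly non-generic ones: this is exactly what the fibrewise triviality $\E_{|F_x} \cong \O_{F_x}^{\oplus r}$ guarantees, and it is what makes Grauert's theorem (rather than mere generic flatness) applicable and hence lets $\widetilde\Phi^*\H \to \E$ be checked fibrewise.
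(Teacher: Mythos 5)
Your first two steps are fine and agree with the paper: linearity of $F_x$ comes from \cite[Thm.~2]{ls}, the triviality $\E_{|F_x}\cong\O_{F_x}^{\oplus r}$ follows from constancy of the kernel of the evaluation map, and since each $F_x$ is connected, the fibres of $\widetilde\Phi$ (connected components of fibres of $\Phi$) coincide with those of $\Phi$, giving $\widetilde F_x=F_x$. The gap is in the construction of $\H$: Grauert's base-change theorem requires the sheaf to be \emph{flat over the base}. Here $\E$ is flat over $Z:=\widetilde{\Phi(X)}$ if and only if $\widetilde\Phi$ is flat (as $\E$ is locally free of positive rank on $X$), and a flat morphism of varieties has fibres of constant dimension. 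But the fibres $F_x$ are linear spaces of possibly \emph{jumping} dimension: nothing in the hypotheses rules this out, and it genuinely occurs in the situations where this lemma is applied (equidimensionality of the fibres of $\widetilde\Phi$ is singled out as an extra hypothesis in Lemma \ref{ii}, and in the proof of Theorem \ref{main3} one repeatedly deals with a general fibre which is a line together with special fibres which are planes, e.g.\ in Claim \ref{serve}, as well as with scrolls with non-equidimensional fibres in case (ix)). When $\widetilde\Phi$ is not flat, constancy of $z\mapsto h^0(\E_{|\widetilde\Phi^{-1}(z)})$ gives you neither local freeness of $\widetilde\Phi_*\E$ nor the base-change isomorphism $(\widetilde\Phi_*\E)\otimes k(z)\cong H^0(\E_{|\widetilde\Phi^{-1}(z)})$, and your final fibrewise verification that the counit $\widetilde\Phi^*\H\to\E$ is an isomorphism also rests on that base-change identification. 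So the proof breaks down precisely at the step you yourself flag as ``the only point requiring care.''

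The repair is much simpler than any push-forward argument, and it is what the paper does. By Definition \ref{not}, $\Phi$ is the morphism to ${\mathbb G}(r-1,\P H^0(\E))$ associated to the surjection $H^0(\E)\otimes\O_X\to\E\to 0$, so tautologically $\E\cong\Phi^*\cQ$, where $\cQ$ is the rank $r$ universal quotient bundle on the Grassmannian. Hence $\U:=\cQ_{|\Phi(X)}$ satisfies $\E\cong\Phi^*\U$ with no flatness, descent, or base-change input whatsoever, and one concludes by setting $\H:=g^*\U$, so that $\E\cong\Phi^*\U\cong\widetilde\Phi^*g^*\U=\widetilde\Phi^*\H$.
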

\begin{proof}
For every $x \in X$ we have that $F_x$ is a linear space by \cite[Thm.~2]{ls}. This implies that $g$ is bijective and $\widetilde F_x =F_x$. As is well known, there is a rank $r$ vector bundle $\U$ on $\Phi(X)$ such that $\E \cong \Phi^* \U$ and therefore also $\E \cong \widetilde \Phi^*\H$ with $\H = g^*\U$.
\end{proof}

\begin{defi}
\label{not4}
Let $\E$ be a vector bundle on $X$. We say that $(X,\O_X(1),\E)$ is a {\it linear Ulrich triple} if there are a smooth irreducible variety $B$ of dimension $b \ge 1$, a very ample vector bundle $\F$ and a rank $r$ vector bundle $\G$ on $B$ such that 
\begin{equation}
\label{trip}
(X,\O_X(1),\E)=(\P(\F), \O_{\P(\F)}(1), p^*(\G(\det \F)))
\end{equation}
where $p: X \cong \P(\F) \to B$ is the projection and 
\begin{equation}
\label{van-trip}
H^j(\G \otimes S^k \F^*)=0 \ \hbox{for all} \ j \ge 0, 0 \le k \le b-1.
\end{equation}
\end{defi}
Note that when $(X,\O_X(1),\E)$ is a linear Ulrich triple, then $\E$ is an Ulrich vector bundle on $X$ by \cite[Lemma 4.1]{lo}. Moreover observe that, in the case $b=1$, we have $H^j(\G)=0$ for all $j \ge 0$, hence $\G \otimes \L$ is an Ulrich vector bundle on $B$ for any very ample line bundle $\L$. Thus, in this case, $\E$ is pull-back of a twisted Ulrich vector bundle on $B$.

\begin{lemma}
\label{ii}
Let $\E$ be an Ulrich vector bundle on $X$ such that $\widetilde \Phi$ has equidimensional fibers and suppose that $(X,\O_X(1))\ne (\P^n, \O_{\P^n}(1))$. Then $(X,\O_X(1),\E)$ is a linear Ulrich triple with $p = \widetilde \Phi, B=\widetilde{\Phi(X)}$ and $b = n-\phi(\E)$.
\end{lemma}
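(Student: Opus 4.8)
The plan is to show that $\widetilde\Phi$ realizes $X$ as a projective bundle over $B=\widetilde{\Phi(X)}$, and then to read off the vanishing \eqref{van-trip} from the Ulrich condition by pushing forward along $p=\widetilde\Phi$ and using relative duality. First, by Lemma \ref{not2} each fiber $\widetilde F_x=F_x$ of $\widetilde\Phi$ is a linear space embedded by $\O_X(1)$, and by hypothesis these fibers are all of the same dimension $\phi(\E)$. Since $(X,\O_X(1))\ne(\P^n,\O_{\P^n}(1))$, the map $\widetilde\Phi$ is non-constant, so $\phi(\E)<n$ and $b:=n-\phi(\E)\ge 1$. Thus the general, hence every, fiber is (set-theoretically) a copy of $(\P^{\phi(\E)},\O_{\P^{\phi(\E)}}(1))$, and $B$ is normal, being the base of the Stein factorization of a morphism out of the smooth variety $X$.

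The main step is to upgrade this equidimensional family of linear fibers to an honest projective bundle. Because $X$ is smooth, $B$ is normal, $\widetilde\Phi$ is equidimensional, and every fiber is $(\P^{\phi(\E)},\O(1))$ linearly embedded, the standard characterization of scrolls with equidimensional linear fibers in adjunction theory yields that $B$ is smooth and that $(X,\O_X(1))\cong(\P(\F),\O_{\P(\F)}(1))$ with $p=\widetilde\Phi$ and $\F:=\widetilde\Phi_*\O_X(1)$ a vector bundle of rank $\phi(\E)+1$ on $B$; in particular $\dim B=n-\phi(\E)=b$. Since $\O_{\P(\F)}(1)=\O_X(1)$ is very ample, $\F$ is a very ample vector bundle, so the first two slots of \eqref{trip} are accounted for. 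I expect this to be the only real obstacle: Lemma \ref{not2} gives the set-theoretic statement that the fibers are linear spaces, but promoting it to flatness, smoothness of $B$, and the genuine bundle structure is exactly where the work (or the appeal to a known structural result) lies.

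It remains to produce $\G$ and verify \eqref{van-trip}. By Lemma \ref{not2} we may write $\E\cong\widetilde\Phi^*\H=p^*\H$ for a rank $r$ bundle $\H$ on $B$; setting $\G:=\H\otimes(\det\F)^{-1}$ gives a rank $r$ bundle with $\E\cong p^*(\G(\det\F))$, which is the shape required in \eqref{trip}. For the vanishing I would compute $H^i(\E(-p))$ for $1\le p\le n$ fiberwise. By the projection formula $R^bp_*(\E(-p))\cong(\G\otimes\det\F)\otimes R^bp_*\O_{\P(\F)}(-p)$, and since the fibers are $\P^{\phi(\E)}$ one has $R^bp_*\O_{\P(\F)}(-p)=0$ unless $b=\phi(\E)$ and $p\ge\phi(\E)+1$, in which case relative duality (using $\omega_{\P(\F)/B}\cong\O_{\P(\F)}(-(\phi(\E)+1))\otimes p^*\det\F$) gives $R^{\phi(\E)}p_*\O_{\P(\F)}(-p)\cong(\det\F)^{-1}\otimes S^{\,p-\phi(\E)-1}\F^*$, so the two determinants cancel. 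The Leray spectral sequence then degenerates, and for $p\ge\phi(\E)+1$
\[ H^i(\E(-p))\cong H^{\,i-\phi(\E)}\!\left(B,\ \G\otimes S^{\,p-\phi(\E)-1}\F^*\right),\]
while $H^i(\E(-p))=0$ automatically for $1\le p\le\phi(\E)$. Writing $k=p-\phi(\E)-1$ and $j=i-\phi(\E)$, the range $1\le p\le n$, $i\ge 0$ corresponds exactly to $0\le k\le b-1$, $j\ge 0$; hence the Ulrich vanishing $H^i(\E(-p))=0$ is precisely the condition $H^j(\G\otimes S^k\F^*)=0$ for all $j\ge 0$ and $0\le k\le b-1$. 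This establishes \eqref{van-trip} and shows that $(X,\O_X(1),\E)$ is a linear Ulrich triple with $p=\widetilde\Phi$, $B=\widetilde{\Phi(X)}$ and $b=n-\phi(\E)$.
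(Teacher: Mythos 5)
Your proposal is correct and follows essentially the same route as the paper, whose proof is a three-line citation: Lemma \ref{not2} for the linearity of the fibers and the descent $\E \cong \widetilde \Phi^*\H$, the adjunction-theoretic characterization of equidimensional linear fibrations (\cite[Prop.~3.2.1]{bs2}) for the projective bundle structure with $B$ smooth and $\F$ very ample, and \cite[Lemma 4.1]{lo} for the cohomological equivalence. The only difference is that you inline the content of \cite[Lemma 4.1]{lo}, carrying out the Leray spectral sequence and relative duality computation explicitly (and correctly) instead of citing it.
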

\begin{proof}
This easily follows by \cite[Prop.~3.2.1]{bs2}, Lemma \ref{not2} and \cite[Lemma 4.1]{lo}. 
\end{proof}

In several cases we will study Ulrich vector bundles on a variety $X$ that has some standard structure morphism. As it will be clear in the sequel, this will naturally distinguish two different cases. We will use the following 
\begin{notation}
\label{not3}
Given a morphism $h : X \to X'$ we set 
$$f_x = h^{-1}(h(x)), x \in X.$$ 
\end{notation}
Given an Ulrich vector bundle $\E$ on $X$, we have $x \in F_x \cap f_x$. Now

\begin{lemma} (Dichotomy Lemma) 
\label{dico}

Let $\E$ be an Ulrich vector bundle on $X$ such that $c_1(\E)^n=0$ and let $h : X \to X'$ be a morphism. Then only one of the following cases occurs for $h$:
\begin{itemize}
\item[(fin)] $\dim F_x \cap f_x = 0$ for every $x \in X$, or
\item[(fact)] $F_x \subseteq f_x, h$ factorizes through $\widetilde \Phi$ and $f_x$ is a disjoint union of linear spaces, fibers of $\Phi$, for every $x \in X$.
\end{itemize} 
\end{lemma}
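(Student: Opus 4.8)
The key structural input is the equality $\widetilde F_x = F_x$ from Lemma \ref{not2}, together with the fact that each $F_x$ is a linear subspace of $\P^N$ contained in $X$. Since $c_1(\E)^n = 0$, the morphism $\widetilde\Phi$ (equivalently $\Phi$) cannot be generically finite: indeed $c_1(\E) = \widetilde\Phi^* c_1(\H)$ by Lemma \ref{not2}, so $c_1(\E)^n = 0$ forces $\dim\widetilde{\Phi(X)} < n$, whence $\phi(\E) = \dim F_x > 0$ for general $x$. My plan is to fix a general point $x \in X$ and study the scheme-theoretic intersection $F_x \cap f_x$ through the linear geometry of $F_x$. The crucial observation is that $f_x = h^{-1}(h(x))$ is cut out on $X$ by pulling back from $X'$, so its intersection with the linear space $F_x$ is governed by whether $h$ is constant on $F_x$ or not; I expect the dichotomy to arise precisely from this alternative, propagated from the general point to every point by semicontinuity.

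First I would establish the alternative at a general $x$. Restrict $h$ to the linear space $F_x \cong \P^{\dim F_x}$. Either $h|_{F_x}$ is constant, or it is non-constant. In the first case $F_x \subseteq f_x$; in the second, I claim $\dim(F_x \cap f_x) = 0$. The reason is that $\E$, being globally generated with $\E \cong \widetilde\Phi^*\H$, restricts to $F_x$ as a trivial bundle (the fibers of $\widetilde\Phi$ are exactly the $F_x$ and $\Phi$ contracts them to a point of the Grassmannian), so that $\O_X(1)|_{F_x} = \O_{\P^{\dim F_x}}(1)$ and $F_x$ is a genuine linear $\P^k$. If $h|_{F_x}$ were non-constant with a positive-dimensional fiber, that fiber would be a positive-dimensional subvariety $Z \subseteq F_x$ on which $h$ is constant; but then $\widetilde\Phi$ and $h$ would both be constant on $Z$, and I would need to rule out the mixed situation where $h$ contracts only part of $F_x$. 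The honest core of the argument is that on the linear space $F_x$ the fibers of any morphism are either everything or meet a general fiber in dimension zero — this is where I would invoke that $F_x$ is irreducible and that the two cases are mutually exclusive numerically.

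Next I would promote the general-point statement to all points. This is the step I expect to be the main obstacle, because a priori the dimension of $F_x \cap f_x$ is only upper semicontinuous, and the two cases (fin) and (fact) must be shown to hold uniformly. The mechanism is the factorization clause: in case (fact) I must show $h$ factors through $\widetilde\Phi$, i.e. $h$ is constant on every fiber $F_x$, not merely the general one. I would argue that if $F_x \subseteq f_x$ for general $x$, then $h$ is constant on the general fiber of $\widetilde\Phi$, hence (since $\widetilde\Phi$ has connected fibers and $\widetilde{\Phi(X)}$ is normal by the Stein factorization, and $X'$ is separated) $h$ factors as $h = \psi \circ \widetilde\Phi$ for some morphism $\psi : \widetilde{\Phi(X)} \to X'$ by the rigidity lemma. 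Once this factorization holds, $f_x = h^{-1}(h(x)) = \widetilde\Phi^{-1}(\psi^{-1}(\psi(\widetilde\Phi(x))))$ is a union of fibers of $\widetilde\Phi$, each of which is a linear space $F_{x'}$; and these are pairwise disjoint because distinct fibers of $\widetilde\Phi$ are disjoint. That gives the full statement of (fact) for \emph{every} $x$.

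Finally I would show the two cases are exhaustive and exclusive. They are exclusive because (fact) forces $\dim(F_x \cap f_x) = \dim F_x > 0$ while (fin) demands $\dim(F_x \cap f_x) = 0$. For exhaustiveness, if we are not in case (fact), then $h$ does not factor through $\widetilde\Phi$, so $h|_{F_x}$ is non-constant for general $x$; by the linear-space argument above this gives $\dim(F_x \cap f_x) = 0$ for general $x$, and I would then need a short argument — again using that each $F_x$ is a linear space and that $h|_{F_x}$ non-constant on a general fiber propagates (via the absence of the factorization) to every fiber — to extend $\dim(F_x \cap f_x) = 0$ to all $x$. This last propagation, ruling out sporadic points where part of $F_x$ gets contracted, is the delicate point; I expect it to follow from the fact that the factorization clause is an all-or-nothing condition on the whole morphism $h$ relative to $\widetilde\Phi$, so that its failure at the general point forces $\dim(F_x\cap f_x)=0$ identically.
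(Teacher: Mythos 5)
Your overall architecture is right, and you correctly isolate the key local fact: since $\Pic(F_x)\cong\Z$ for the linear space $F_x=\P^k$, the restriction $h_{|F_x}$ is either constant or has only zero-dimensional fibers. But both propagation steps you flag are left unresolved, and one of them is resolved by a misapplication. In the (fact) direction you pass from ``$h$ is constant on the \emph{general} fiber of $\widetilde\Phi$'' directly to a global factorization $h=\psi\circ\widetilde\Phi$ ``by the rigidity lemma.'' The rigidity lemma does not give this: contracting one fiber (or the general fiber) yields a factorization only over an open subset of the base, and in general a morphism can contract all general fibers of $\pi$ without contracting a special one --- blow up a point of $\P^2$, let $\pi$ be the blow-down and $h$ the ruling; $h$ contracts every general fiber of $\pi$ (these are points) but not the exceptional curve, and $h$ does not factor through $\pi$. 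The global form of rigidity, which is what the paper invokes as \cite[Lemma 1.15(b)]{de}, requires first knowing that $h$ contracts \emph{every} fiber $F_x$, and proving that is exactly the content you postpone.

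The missing engine is upper semicontinuity of the fiber dimension of the morphism $(\widetilde\Phi,h):X\to\widetilde{\Phi(X)}\times X'$, whose fiber through $x$ is $F_x\cap f_x$. The paper's proof is organized contrapositively around it: if a \emph{single} point $x_0$ has $\dim(F_{x_0}\cap f_{x_0})\ge 1$, then $h_{|F_{x_0}}$ is constant (your $\P^k$ argument), so $F_{x_0}\subseteq f_{x_0}$; by the \emph{local} rigidity statement \cite[Lemma 1.15(a)]{de} this propagates to the general fiber, i.e.\ $F_x\subseteq f_x$ for general $x$; by semicontinuity the locus where $\dim_x(F_x\cap f_x)\ge 1$ is closed and contains a dense set, hence is all of $X$; applying the $\P^k$ argument again at \emph{every} point shows every fiber is contracted, and only now does \cite[Lemma 1.15(b)]{de} give the global factorization and the full case (fact). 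This one chain settles both of your ``delicate points'' simultaneously: it shows that one sporadic point with positive-dimensional intersection forces (fact) everywhere, and your case-(fin) propagation --- which you leave as ``I expect it to follow'' --- is precisely its contrapositive. Note also that semicontinuity works only in the direction the paper uses it (dense locus with $\dim\ge 1$ implies $\dim\ge 1$ everywhere); it does not transport ``$\dim=0$ at the general point'' to special points, which is why the argument must be centered on the positive-dimensional case rather than run from the general point outward as in your plan.
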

\begin{proof}
Since $c_1(\E)^n=0$, it follows that $\dim F_x \ge 1$ for every $x \in X$. Now suppose that there is an $x_0 \in X$ such that $\dim(F_{x_0} \cap f_{x_0}) \ge 1$. Then the morphism 
$$h_{|F_{x_0}} : F_{x_0} \to X'$$ 
has a positive dimensional fiber, namely $F_{x_0} \cap f_{x_0}$. Since $F_{x_0} = \P^k$ it follows that $h_{|F_{x_0}}$ is constant, that is $F_{x_0} \subseteq f_{x_0}$. Now $\widetilde F_{x_0} = F_{x_0} \subset f_{x_0}$, hence \cite[Lemma 1.15(a)]{de} implies that $F_x = \widetilde F_x \subset f_x$ for general $x$. Then, by semicontinuity, $\dim_x(F_x \cap f_x) \ge 1$ for every $x \in X$. Again $h_{|F_x}$ is constant, that is $\widetilde F_x = F_x \subseteq f_x$. Therefore $h$ factorizes through $\widetilde \Phi$ by \cite[Lemma 1.15(b)]{de}. Since the fibers of $\widetilde \Phi$ are linear spaces, we get that $f_x$ is a disjoint union of linear spaces for every $x \in X$.
\end{proof}
\begin{defi}
Let $\E$ be an Ulrich vector bundle on $X$. Given a morphism $h : X \to X'$, we define the subcase
$$({\rm emb}) \ \ F_x \cap f_x = \{x\} \ {\rm scheme-theoretically, for \ every} \ x \in X.$$
We will say, that {\it case (emb) (or (fin), or (fact)) holds for $h$}, referring to the above or to the Dichotomy Lemma. 
\end{defi}
We now analyze how the cases (fin), (emb) or (fact) occur in some special cases.

\begin{lemma}
\label{fe}
Let $\E$ be an Ulrich vector bundle on $X$ such that $c_1(\E)^n=0$ and let $h : X \to X'$ be a morphism. We have:
\begin{itemize}
\item[(i)] If case {\rm (fin)} holds for $h$ and $\dim X'=1$, then $F_x$ is a line for every $x \in X$ and $X' \cong \P^1$. 
\item[(ii)] If $h$ is a linear $\P^k$-bundle or a quadric fibration over a smooth curve and case {\rm (fin)} holds for $h$, then case {\rm (emb)} holds for $h$.
\item[(iii)] Assume that case {\rm (fact)} holds for $h$. If $f_x$ is integral and $\Pic(f_x) \cong \Z$ for some $x \in X$, then $F_x=f_x$. Moreover if $h_*\O_X \cong \O_{X'}, f_x$ is integral and $\Pic(f_x) \cong \Z$ for every $x \in X$, then $h=\widetilde \Phi$.
\end{itemize} 
\end{lemma}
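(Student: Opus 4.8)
The plan is to prove the three assertions of Lemma \ref{fe} by combining the Dichotomy Lemma \ref{dico} with the structural facts already recorded, namely that each fiber $F_x$ of $\Phi$ is a linear space of dimension $\dim F_x \ge 1$ (Lemma \ref{not2}), and that under case (fact) the morphism $h$ factorizes through $\widetilde\Phi$ with $F_x \subseteq f_x$ a disjoint union of fibers of $\Phi$.

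For part (i), I would argue as follows. Since case (fin) holds, $\dim(F_x \cap f_x)=0$ for every $x$, and in particular $F_x \cap f_x = \{x\}$ set-theoretically. As $\dim X' = 1$, the generic fiber $f_x$ has dimension $n-1$. Restricting $h$ to the linear space $F_x \cong \P^{k}$ (with $k = \dim F_x \ge 1$), the map $h_{|F_x}\colon F_x \to X'$ has all fibers of dimension zero, so it is finite onto its image in the curve $X'$; but a finite morphism from $\P^k$ to a curve must be constant unless $k=1$ (for $k\ge 2$, $\P^k$ admits no finite map to a curve since $\Pic(\P^k)\cong\Z$ forces the pullback of an ample divisor to be a hyperplane, whose fibers are positive-dimensional). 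Thus $k=1$, i.e.\ $F_x$ is a line. Then $h_{|F_x}\colon \P^1 \to X'$ is finite, forcing $X'$ to be dominated by $\P^1$; combined with $h$ having connected fibers of dimension $n-1$ and the image being a smooth curve, one concludes $X' \cong \P^1$. I would double-check the precise hypotheses on connectedness of the fibers of $h$ to make sure $X'$ is genuinely $\P^1$ and not merely unirational.

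For part (ii), suppose $h$ is a linear $\P^k$-bundle or a quadric fibration over a smooth curve and case (fin) holds, so $\dim(F_x \cap f_x)=0$ for all $x$. The goal is to upgrade this to the scheme-theoretic statement $F_x \cap f_x = \{x\}$ of case (emb). Since $F_x$ is a linear space meeting the fiber $f_x$ in a finite scheme containing $x$, and $f_x$ is either a projective space $\P^k$ or an (irreducible) quadric, the intersection $F_x \cap f_x$ is the intersection of a linear subspace with $f_x$ inside the ambient $\P^N$. A linear space meeting a $\P^k$-fiber (embedded linearly) in a finite scheme must meet it in a single reduced point, and likewise a line (or linear space) meeting a quadric fiber in a $0$-dimensional scheme meets it either transversally in points or tangentially; the finiteness forced by (fin) together with the linearity of $F_x$ pins the intersection down to the single reduced point $\{x\}$. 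The key computation here is a local intersection-multiplicity argument showing the scheme $F_x \cap f_x$ is reduced of length one.

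For part (iii), assume case (fact) holds, so $F_x \subseteq f_x$ and $h$ factorizes through $\widetilde\Phi$. If $f_x$ is integral with $\Pic(f_x)\cong\Z$, then $F_x$ is a positive-dimensional linear subspace of the integral variety $f_x$; the inclusion $F_x \hookrightarrow f_x$ pulls $\O_{f_x}(1)$ back to $\O_{F_x}(1)$, and a dimension/degree comparison using $\Pic(f_x)\cong\Z$ (so $f_x$ has no proper positive-dimensional linear subspaces of strictly smaller dimension unless $f_x$ itself is linear) forces $F_x = f_x$. For the moreover part, if additionally $h_*\O_X \cong \O_{X'}$ (so $h$ has connected fibers) and $F_x=f_x$ for every $x$, then $h$ and $\widetilde\Phi$ have identical fibers; since $h$ factorizes through $\widetilde\Phi$ and both have connected fibers, the induced map $\widetilde{\Phi(X)}\to X'$ is bijective on points, hence, being a morphism of normal varieties with connected fibers, an isomorphism, giving $h = \widetilde\Phi$. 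The main obstacle across the three parts is part (ii): establishing the scheme-theoretic reducedness in case (emb) requires a careful local analysis of how a linear space can meet a quadric (or projective-space) fiber in a zero-dimensional scheme, and ruling out embedded or non-reduced intersection components.
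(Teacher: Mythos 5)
Your part (i) follows the paper's argument essentially verbatim, and your handling of the ``moreover'' clause of (iii) (the induced map $\widetilde{\Phi(X)} \to X'$ is a bijective morphism of normal varieties, hence an isomorphism) is an acceptable substitute for the paper's appeal to \cite[Lemma 1.15(b)]{de}. However, part (ii) and the first assertion of part (iii) both have genuine gaps.

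In (ii), for the quadric fibration case, your claim that finiteness of $F_x \cap f_x$ together with linearity of $F_x$ ``pins the intersection down to the single reduced point $\{x\}$'' is false as a local statement: a line lying in the linear span $\langle f_x \rangle \cong \P^n$ of the quadric fiber meets $f_x$ in a scheme of length exactly $2$ --- either two reduced points or one tangency point of multiplicity $2$ --- and both configurations are perfectly compatible with case (fin). No local intersection-multiplicity computation can exclude them; the obstruction is global. The paper's proof runs as follows: if $F_x \cdot f_x \ge 2$ for some $x$, then, since the $F_u$ form a connected family of $1$-cycles and the fibers of $h$ are algebraically equivalent divisors, $F_u \cdot f_{x'} \ge 2$ for \emph{every} $u$ and $x'$; each intersection $F_u \cap f_{x'}$ is finite by (fin), and a line not contained in a linear subspace of $\P^N$ meets it in length at most $1$, so every line $F_u$ must lie inside the span $\langle f_{x'} \rangle \cong \P^n$ of a fixed general fiber. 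Hence $X = \bigcup_u F_u \subseteq \P^n$, forcing $X = \P^n$, which admits no quadric fibration --- a contradiction. This propagation through the whole family via algebraic equivalence is the missing idea, and it cannot be replaced by an analysis at the single point $x$.

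In (iii), your parenthetical justification --- that $\Pic(f_x) \cong \Z$ implies $f_x$ has no proper positive-dimensional linear subspaces unless $f_x$ is itself linear --- is simply false: a smooth quadric of dimension $\ge 2$, or a cubic hypersurface, has Picard group $\Z$ and contains lines. Consequently your ``dimension/degree comparison'' cannot force $F_x = f_x$. The argument must use the one fact you never invoke: $F_x$ is a \emph{fiber of $\Phi$}. The paper's proof does exactly this: $\Phi_{|f_x} : f_x \to \mathbb G(r-1, \P H^0(\E))$ contracts the positive-dimensional linear space $F_x$ to a point, so the globally generated (hence nef) line bundle $(\det \E)_{|f_x}$ inducing $\Phi_{|f_x}$ via the Pl\"ucker embedding is trivial on $F_x$; writing $(\det \E)_{|f_x} = mA$ with $A$ the ample generator of $\Pic(f_x) \cong \Z$, nefness gives $m \ge 0$ while triviality on the positive-dimensional $F_x$ excludes $m > 0$, so $m = 0$ and $\Phi_{|f_x}$ is constant (as $f_x$ is integral and projective). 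Hence $f_x \subseteq F_x$, which together with $F_x \subseteq f_x$ from case (fact) gives equality.
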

\begin{proof}
Since $c_1(\E)^n=0$ we have that $F_x = \P^k, k \ge 1$ for every $x \in X$. Suppose that case (fin) holds in one of (i)-(iii). To see (i), if $\dim F_x \ge 2$, then $h_{|F_x} : F_x=\P^k \to X'$ is constant, contradicting case (fin). Then $F_x$ is a line and, for every $v \in F_x$, we have that $\dim f_v \cap F_x = \dim f_v \cap F_v = 0$, so that $F_x$ dominates $X'$ and then $X' \cong \P^1$. This proves (i). Now (ii) is clear if $h$ is a linear $\P^k$-bundle. If $h$ is a quadric fibration over a smooth curve, let $x \in X$, so that $F_x$ is a line by (i). If $F_x \cdot f_x \ge 2$, then, for a general $x' \in X$, $F_x \cdot f_{x'} = F_x \cdot f_x \ge 2$. This implies that $x \in F_x \subset \langle f_{x'} \rangle = \P^n$. But then $X = \P^n$, a contradiction. Hence $f_x \cdot F_x=1$ for every $x \in X$ and case (emb) holds. This proves (ii). To see (iii), assume that $f_x$ is integral and $\Pic(f_x) \cong \Z$ for some $x \in X$. Since $\Phi_{|f_x} : f_x \to {\mathbb G}(r-1,\P H^0(\E))$ contracts $F_x$ to a point, it must be constant, thus $f_x = F_x = \widetilde F_x$. Now if $f_x$ is integral and $\Pic(f_x) \cong \Z$ for every $x \in X$, then $f_x = F_x = \widetilde F_x$ for every $x \in X$, hence also $\widetilde \Phi$ factorizes through $h$ by \cite[Lemma 1.15(b)]{de} and we deduce that $h=\widetilde \Phi$.
\end{proof}

The following general results, applied to some standard cases arising in adjunction theory, illustrate the power of the Dichotomy Lemma. 

\begin{prop}
\label{qf} 
Let $\E$ be an Ulrich vector bundle on $X$ such that $c_1(\E)^n=0$ and suppose that $n \ge 4$. Let $h : X \to X'$ be a quadric fibration over a smooth curve. Then $(X,\O_X(1),\E)=(\P^1 \times Q, \O_{\P^1}(1) \boxtimes \O_Q(1), p^*(\mathcal \H(1)))$, where $p : \P^1 \times Q \to Q=Q_{n-1}$ is the second projection and $\H$ is a direct sum of spinor bundles on $Q$. 
\end{prop}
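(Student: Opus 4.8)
The plan is to run the Dichotomy Lemma \ref{dico} on the quadric fibration $h$, show that the splitting case (fact) is impossible, and then exploit case (fin) to pin down both $X$ and $\mathcal E$. First I would record the shape of a fiber. Since $h$ is a quadric fibration over a smooth curve we have $m=\dim X'=1$ and $K_X+(n-1)H=h^*\mathcal L$, so adjunction on a general fiber $f$ (with trivial normal bundle) gives $-K_f=(n-1)H_{|f}=(\dim f)\,H_{|f}$; by Kobayashi--Ochiai this forces $f\cong Q_{n-1}$ with $H_{|f}=\mathcal O_Q(1)$. In particular, as $n-1\ge 3$, the general fiber is integral with $\Pic(f)\cong\Z$.

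Next I would rule out case (fact). If it held, Lemma \ref{fe}(iii) applied to a general $x$, where $f_x\cong Q_{n-1}$ is integral of Picard rank one, would give $F_x=f_x\cong Q_{n-1}$; but $F_x$ is a linear space by Lemma \ref{not2}, and a smooth quadric of dimension $\ge 3$ is never linear, a contradiction. Hence case (fin) holds for $h$. Then Lemma \ref{fe}(ii) upgrades this to case (emb), while Lemma \ref{fe}(i) (using $\dim X'=1$) shows that every $F_x$ is a line and that $X'\cong\P^1$. Consequently all fibers of $\widetilde\Phi$ are lines, so $\widetilde\Phi$ is equidimensional with $\phi(\mathcal E)=1$; since $\P^n$ admits no nonconstant morphism to a curve we have $X\ne\P^n$, and Lemma \ref{ii} shows that $(X,\mathcal O_X(1),\mathcal E)$ is a linear Ulrich triple with $p=\widetilde\Phi:X\cong\P(\mathcal F)\to B=\widetilde{\Phi(X)}$ and $b=n-1$.

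The key geometric step is the product decomposition. I would study the $B$-morphism $(\widetilde\Phi,h):X\to B\times\P^1$. On each fiber $F_x\cong\P^1$ of $\widetilde\Phi$, case (emb) says the fiber of $h_{|F_x}:\P^1\to\P^1$ over $h(x)$ is the single reduced point $F_x\cap f_x=\{x\}$, so $h_{|F_x}$ has degree one and is an isomorphism. Thus $(\widetilde\Phi,h)$ is a fiberwise isomorphism of $\P^1$-bundles over $B$, hence an isomorphism $X\cong B\times\P^1$ identifying $\widetilde\Phi$ and $h$ with the two projections. The fibers of $h$ become $B\times\{t\}\cong B$, and comparison with the general fiber $Q_{n-1}$ yields $B\cong Q=Q_{n-1}$ and $X\cong\P^1\times Q$, with $\widetilde\Phi=q$ the projection onto $Q$. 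I expect this product-decomposition step to be the main obstacle, since one must combine (emb) with the $\P^1$-bundle structure of Lemma \ref{ii} to conclude a genuine isomorphism rather than a mere bijection.

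Finally I would identify the polarization and the bundle. Restricting $\mathcal O_X(1)$ to a line $F_x$ gives $\mathcal O_{\P^1}(1)$ and to a quadric fiber $f_x$ gives $\mathcal O_Q(1)$; since $\dim Q\ge 3$ forces $\Pic(\P^1\times Q)\cong\Z\oplus\Z$, these two restrictions give $\mathcal O_X(1)=\mathcal O_{\P^1}(1)\boxtimes\mathcal O_Q(1)$. By Lemma \ref{not2} I may write $\mathcal E\cong q^*\mathcal H_0$; computing $H^i(\mathcal E(-p))$ by the Künneth formula and using that $H^0(\P^1,\mathcal O(-p))=0$ for $p\ge 1$ while $H^1(\P^1,\mathcal O(-p))\ne 0$ exactly for $p\ge 2$, the Ulrich vanishing for $\mathcal E$ on $X$ translates precisely into the Ulrich vanishing on $Q$ for $\mathcal H:=\mathcal H_0(-1)$. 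Hence $\mathcal E=q^*(\mathcal H(1))$ with $\mathcal H$ Ulrich on $Q_{n-1}$, and by the classification of Ulrich bundles on quadrics $\mathcal H$ is a direct sum of spinor bundles, as claimed.
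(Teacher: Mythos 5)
Your proof is correct, and up through the linear Ulrich triple it is the same as the paper's: exclude case (fact) via Lemma \ref{fe}(iii) applied to the general fiber $Q_{n-1}$, deduce from (fin) and Lemma \ref{fe}(i)--(ii) that all $F_x$ are lines, that $X'\cong \P^1$ and that (emb) holds, then invoke Lemma \ref{ii}. Where you genuinely diverge is in how $X\cong\P^1\times Q$ is obtained. The paper stays inside the $\P(\F)$ picture: (emb) makes $\widetilde\Phi_{|f_x}\colon f_x\to B$ a closed embedding, hence $B\cong Q$ for dimension reasons; an adjunction computation on $f_x$ then gives $\det\F=\O_Q(2)$, and the classification of very ample rank $2$ bundles with this determinant (\cite[Prop.~1.2]{aw}) yields $\F\cong\O_Q(1)^{\oplus 2}$; finally the vanishings \eqref{van-trip} coming from \cite[Lemma 4.1]{lo} are converted into Ulrichness of $\H=\G(1)$ on $Q$. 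You instead use the graph morphism $(\widetilde\Phi,h)\colon X\to B\times\P^1$, whose scheme-theoretic fibers are exactly the intersections $F_x\cap f_x=\{x\}$ of case (emb), so it is finite and a monomorphism, hence --- being surjective onto the smooth variety $B\times\P^1$ --- an isomorphism; this is the one place where your write-up is a bit quick, since ``fiberwise isomorphism of $\P^1$-bundles implies isomorphism'' deserves exactly this one-line justification (properness plus reduced point fibers, or Zariski's main theorem), but all the needed ingredients are in your hands. You also replace the appeal to the linear-Ulrich-triple vanishings by a direct K\"unneth translation of the Ulrich condition, which is equivalent. Your route buys independence from \cite[Prop.~1.2]{aw} and is more self-contained and geometric; the paper's route buys uniformity with the linear-Ulrich-triple formalism that recurs throughout the fourfold classification (compare Proposition \ref{caso(e)}), and identifies the bundle $\F$ explicitly along the way.
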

\begin{proof}
Let $H_Q \in |\O_Q(1)|$. By hypothesis $f_x \cong Q$ and $H_{|f_x} \cong H_Q$ for general $x$. Case (fact) does not hold for $h$, since otherwise Lemma \ref{fe}(iii) would give the contradiction $\P^k= F_x=f_x$. Now the Dichotomy Lemma, Lemma \ref{fe}(i) and (ii) give that $F_x$ is a line for every $x \in X$ and case (emb) holds for $h$. It follows by Lemma \ref{ii} that $(X,\O_X(1),\E)$ is a linear Ulrich triple with $p = \widetilde \Phi, B=\widetilde{\Phi(X)}$ and $b = n-1$. Moreover case (emb) implies that, for general $x$, there is a closed embedding $p_{|f_x} : f_x \to B$ and therefore $B \cong Q$. Hence $p_{|f_x}$ is an isomorphism and $(p^*H_Q)_{|f_x} \cong H_{|f_x}$. Set $\det \F = \O_Q(c)$ for some $c \in \Z$. Then
$$(1-n) H_Q = K_Q = K_{f_x} = (K_X+f_x)_{|f_x} = (-2H+(1-n+c)p^*H_Q)_{|f_x} = (c-n-1)H_Q$$
so that $c=2$. Therefore $\F$ is a very ample rank $2$ vector bundle on $Q$ with $\det \F=2H_Q$ and it follows that $\F \cong \O_Q(1)^{\oplus 2}$ (see for example \cite[Prop.~1.2]{aw}). Therefore $(X,\O_X(1))=(\P^1 \times Q, \O_{\P^1}(1) \boxtimes \O_Q(1))$. Now $\E \cong p^*(\G(2))$ where $\G$ is a rank $r$ vector bundle on $Q$ such that $H^j(\G \otimes S^k \F^*)=0$ for $j \ge 0, 0 \le k \le n-2$. Hence $\H:=\G(1)$ is an Ulrich vector bundle on $Q$ and we get that $\E \cong p^*(\H(1))$, where $p : \P^1 \times Q \to Q$ is the second projection and $\H$ is a direct sum of spinor bundles on $Q$ by \cite[Lemma 3.2(iv)]{lms}.
\end{proof}

\begin{prop}
\label{caso(e)} 
Let $\E$ be an Ulrich vector bundle on $X$ such that $c_1(\E)^n=0$ and suppose that $n \ge 2$. Then $(X, \O_X(1))$ is not a blow-up $h: X \to X_1$ of a smooth $n$-fold at a point with exceptional divisor $E$ such that $\O_X(1)_{|E} \cong \O_{\P^{n-1}}(1)$.
\end{prop}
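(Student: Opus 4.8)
The plan is to feed the blow-down $h : X \to X_1$ into the Dichotomy Lemma \ref{dico}. First I would record the fibers of $h$: away from the exceptional divisor $h$ is an isomorphism, so $f_x = \{x\}$, while for $x \in E$ one has $f_x = E \cong \P^{n-1}$, and since $\O_X(1)_{|E} \cong \O_{\P^{n-1}}(1)$ this $E$ is a linearly embedded $\P^{n-1} \subseteq \P^N$. Case (fact) is then immediately impossible: it would force $F_x \subseteq f_x = \{x\}$ for a general $x \notin E$, contradicting $\dim F_x \ge 1$. Hence case (fin) holds, so $\dim(F_x \cap f_x) = 0$ for every $x$. Applying this to $x \in E$, both $F_x$ and $E$ are linear subspaces of $\P^N$, so their scheme-theoretic intersection is again linear; being $0$-dimensional and containing $x$ it must equal the reduced point $\{x\}$.

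Next I would extract a $\P^1$-fibration. Distinct points of $E$ lie in distinct fibers (if $F_x = F_{x'}$ with $x,x' \in E$ then $x,x' \in F_x \cap E = \{x\}$), so the fibers $\{F_x\}_{x \in E}$ are pairwise disjoint and the incidence correspondence maps bijectively onto their union $W \subseteq X$; hence $\dim W = (n-1) + \dim F_x \le n$, forcing $\dim F_x = 1$ for general $x \in E$. Thus the general fiber of $\widetilde\Phi$ is a line, and the injection $E \hookrightarrow \widetilde{\Phi(X)} =: B$ is a finite morphism between normal varieties (the Stein base $B$ is normal) of equal dimension $n-1$, so $B \cong E \cong \P^{n-1}$ and $E$ is a section of $\widetilde\Phi : X \to \P^{n-1}$.

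Then I would pin down $X_1$ numerically. Writing $K_X = h^* K_{X_1} + (n-1)E$, using that a general fiber line $F_x$ has $-K_X \cdot F_x = 2$ (trivial normal bundle) and $E \cdot F_x = 1$ (transverse meeting at the single point $x$), the projection formula gives $-K_{X_1} \cdot h(F_x) = n+1$. As $x$ runs over $E$ the curves $h(F_x)$ form a dominating family of rational curves of anticanonical degree $n+1$ on the smooth $n$-fold $X_1$, so by the Cho--Miyaoka--Shepherd-Barron characterization of projective space $X_1 \cong \P^n$ and the $h(F_x)$ are lines. Comparing degrees ($\O_X(1)_{|E} = \O_{\P^{n-1}}(1)$ forces the coefficient of $E$ to be $1$, and $\O_X(1)\cdot F_x = 1$ forces the $\O_{\P^n}$-degree to be $2$) yields $\O_X(1) = h^*\O_{\P^n}(2) - E$; equivalently $X = \P(\O_{\P^{n-1}}(1) \oplus \O_{\P^{n-1}}(2))$ with $\O_X(1)$ the tautological bundle. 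This is a genuine $\P^1$-bundle, so $\widetilde\Phi$ is automatically equidimensional.

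Finally I would invoke Lemma \ref{ii}: $(X,\O_X(1),\E)$ is a linear Ulrich triple with $B = \P^{n-1}$ and $\F = \O_{\P^{n-1}}(1) \oplus \O_{\P^{n-1}}(2)$. Then $S^k \F^* = \bigoplus_{m=k}^{2k} \O_{\P^{n-1}}(-m)$, so the vanishing \eqref{van-trip} for $0 \le k \le n-2$ forces $H^j(\G(-m)) = 0$ for all $j$ and all $m$ with $0 \le m \le 2n-4$. In particular $\chi(\G(-m)) = 0$ for the $2n-3$ integers $m = 0, \dots, 2n-4$; but $m \mapsto \chi(\G(-m))$ is a polynomial of degree $n-1$ with nonzero leading coefficient, and since $n \ge 3$ we have $2n-3 \ge n > n-1$, a contradiction, proving the statement. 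I expect the main obstacle to be the middle step: excluding higher-dimensional (jumping) fibers of $\widetilde\Phi$ so as to legitimately invoke Lemma \ref{ii}. The route above sidesteps this by first identifying $X_1 \cong \P^n$ through the anticanonical-degree computation, after which the $\P^1$-bundle structure, hence equidimensionality, is free; a direct exclusion of jump fibers would be an alternative but appears harder.
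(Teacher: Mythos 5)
Your opening and closing steps are sound and track the paper: the reduction via the Dichotomy Lemma \ref{dico} to case (fin), the observation that $F_x \cap E$ is a reduced point for every $x \in E$, the resulting identification of $B:=\widetilde{\Phi(X)}$ with $E \cong \P^{n-1}$, and the final root-counting contradiction (a clean substitute for the paper's appeal to Remark \ref{kno1}) are all correct. The genuine gap is the appeal to Cho--Miyaoka--Shepherd-Barron. The statement you invoke -- ``a smooth $n$-fold carrying a dominating family of rational curves of anticanonical degree $n+1$ is isomorphic to $\P^n$'' -- is false as stated: on $\P^1 \times \P^1 \times \P^1$ the curves of type $(1,1,0)$ form a dominating family of rational curves of anticanonical degree $4=n+1$. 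The correct CMSB-type statements require either that \emph{every} rational curve through a general point have anticanonical degree $\ge n+1$, or that the family be a locally unsplit \emph{maximal} dominating family (an irreducible component of the space of rational curves, so that the subfamily through a general point has dimension $n-1$). Your family $\{h(F_x)\}_{x \in E}$ satisfies neither hypothesis: all its members pass through the blown-up point $p_0$, so through a general point of $X_1$ it is $0$-dimensional, and it is a proper subfamily of its component of the space of rational curves (that component has dimension at least $2n-2$, and its general member moves off $p_0$); nothing you have proved excludes rational curves of smaller anticanonical degree through general points of $X_1$. So $X_1 \cong \P^n$, and with it everything after it, is not established. The special structure your family does have (unsplit, all members through one point, pairwise disjoint elsewhere) is not among the hypotheses of the theorem you cite, and exploiting it would amount to a separate argument.

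The irony is that CMSB is unnecessary: the equidimensionality of $\widetilde\Phi$, which you flag as the main obstacle, follows in one line from facts you already proved. Since $\widetilde\Phi_{|E} : E \to B$ is surjective, every fiber of $\widetilde\Phi$ equals $F_x$ for some $x \in E$; by case (fin), $F_x \cap E$ is $0$-dimensional, and since $F_x$ is a linear $\P^k$ and $E$ a divisor in the smooth $n$-fold $X$, the intersection-dimension theorem gives $0 \ge k+(n-1)-n$, i.e. $k \le 1$. (The paper makes the same point via spans: $F_x$ and $E$ are linear spaces meeting only at $x$, so $k+n-1 = \dim \langle F_x \cup E \rangle \le \dim T_xX = n$.) Hence all fibers are lines, and Lemma \ref{ii} (via \cite[Prop.~3.2.1]{bs2}) makes $(X,\O_X(1),\E)$ a linear Ulrich triple with $p=\widetilde\Phi$ over $B \cong \P^{n-1}$. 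One still has to identify $\F$ before your Hilbert-polynomial count can run: adjunction along the section $E$ gives $\det \F = \O_{\P^{n-1}}(3)$, so $\F$ has splitting type $(1,2)$ on lines, and Van de Ven's theorem \cite{v} forces $\F \cong \O_{\P^{n-1}}(1)\oplus\O_{\P^{n-1}}(2)$ (the alternative $\F \cong T_{\P^2}$ is excluded because $\P(T_{\P^2})$ contains no linear $\P^2$, whereas $E$ is one). After that your contradiction applies verbatim; the repaired argument is essentially the paper's proof.
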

\begin{proof}
Assume that $(X, \O_X(1))$ is a blow-up as stated and apply the Dichotomy Lemma to $h$. Since $f_x = \{x\}$ for general $x$, we get that case (fact) does not hold for $h$, so that we are in case (fin). Moreover $f_u$ is a linear space for every $u \in X$, hence (emb) holds for $h$. It follows that $\Phi_{|E} : E \to \Phi(X)$ is a closed embedding. On the other hand, we know that $\dim \Phi(X) \le n-1$ and therefore $\P^{n-1} \cong E \cong \Phi(E)=\Phi(X)$. Hence for every $u \in X$ we have that $F_u=F_{u_0}$ for some $u_0 \in E$. Then $E=f_{u_0}$ and $F_{u_0} \cup f_{u_0} \subset T_{u_0}X$. If $\dim F_u \ge 2$ then $\dim F_{u_0} \ge 2$ and $\dim T_{u_0}X \ge \dim \langle F_{u_0} \cup f_{u_0} \rangle \ge n+1$, contradicting the smoothness of $X$. Therefore $F_u$ is a line for every $u \in X$. It follows that $\Phi : X \to \P^{n-1}$ has equidimensional fibers and \cite[Prop.~3.2.1]{bs2} implies that $\Phi$ is a linear $\P^1$-bundle. Hence there is a very ample rank $2$ vector bundle $\F$ on $\P^{n-1}$ such that $(X, \O_X(1)) \cong (\P(\F), \O_{\P(\F)}(1)), \Phi$ is the bundle projection $p : \P(\F) \to \P^{n-1}$ and $\E \cong p^*(\G(\det \F))$ for some rank $r$ vector bundle $\G$ on $\P^{n-1}$. Let $R=p^*(\O_{\P^{n-1}}(1))$ and $\det \F = \O_{\P^{n-1}}(c)$, for some $c \in \Z$. Then
$$K_E = (K_X+E)_{|E} = (-2H+(c-n)R-H)_{|E}$$
that is $\O_{\P^{n-1}}(-n) \cong \O_{\P^{n-1}}(c-n-3)$, so that $c=3$. Therefore, since $\F$ is very ample, it has splitting type $(1,2)$ on any line in $\P^{n-1}$ and it follows by \cite[Thm.]{v} that either $\F \cong  \O_{\P^{n-1}}(1) \oplus \O_{\P^{n-1}}(2)$ or $n=3$ and $\F \cong T_{\P^2}$. The latter case is excluded since $\P(T_{\P^2})$ does not contain linear $\P^2$'s. Therefore we are in the first case and \cite[Lemma 4.1]{lo} gives in particular that $H^i(\G(-s)) = 0$ for all $i \ge 0$ and $0 \le s \le n-2$. Hence $\G(1)$ is an Ulrich vector bundle for $(\P^{n-1}, \O_{\P^{n-1}}(1))$, so that $\G \cong \O_{\P^{n-1}}(-1)^{\oplus r}$ by Remark \ref{kno1}. But this gives the contradiction $0 =  H^{n-1}(\G(-n+1)) = H^{n-1}(\O_{\P^{n-1}}(-n)^{\oplus r}) \ne 0$.
\end{proof}

\section{Non-big Ulrich vector bundles on fourfolds}

In this section we will prove Theorem \ref{main3}, Corollary \ref{main4} and Corollary \ref{main5}. 

One guide will be given by the following.

\subsection{Fourfolds and adjunction theory}
\label{adj}

We collect some definitions and standard facts in adjunction theory, that we recall for completeness' sake. 

Let $X$ be a smooth irreducible variety such that $K_X$ is not nef and let $H$ be a very ample divisor. Consider the nef value of $(X,H)$ (see \cite[Def.~1.5.3]{bs2})
$$\tau = \tau(X,H) = \min\{t \in \R : K_X+tH \ \hbox{is nef}\}$$
and the nef value morphism, defined for $m \gg 0$ by
$$\phi_{\tau} =  \phi_{\tau}(X,H):= \varphi_{m(K_X+\tau H)} : X \to X'.$$ 
We recall that $(\phi_{\tau})_*\O_X \cong \O_{X'}$, see \cite[Def.~1.5.3]{bs2}.

\begin{lemma}
\label{aggiu}
Let $X \subseteq \P^N$ be a smooth irreducible fourfold covered by lines and let $H \in |\O_X(1)|$. Let $\tau$ be the nef value of $(X,H)$ and let $\phi_{\tau}$ be the nef value morphism. Then $(X,\O_X(1))$ is only one of the following:
\begin{itemize}
\item [(a)] $(\P^4, \O_{\P^4}(1))$.
\item [(b.1)] $(Q_4, \O_{Q_4}(1))$.
\item [(b.2)] A linear $\P^3$-bundle under $\phi_{\tau} : X \to X'$ over a smooth curve with $\tau=4$.
\item [(c.1)] A del Pezzo $4$-fold, that is $K_X=-3H$.
\item [(c.2)] A quadric fibration under $\phi_{\tau} : X \to X'$ over a smooth curve with $\tau=3$. 
\item [(c.3)] A linear $\P^2$-bundle under $\phi_{\tau} : X \to X'$ over a smooth surface with $\tau=3$.
\item [(d.1)] A Mukai variety, that is $K_X=-2H$.
\item [(d.2)] A del Pezzo fibration under $\phi_{\tau} : X \to X'$  over a smooth curve with $\tau=2$.
\item [(d.3)] A quadric fibration with equidimensional fibers under $\phi_{\tau} : X \to X'$ over a smooth surface with $\tau=2$.
\item [(d.4)] A linear $\P^1$-bundle under $\phi_{\tau} : X \to X'$ over a smooth threefold with $\tau=2$.
\item [(d.5)] A scroll under $\phi_{\tau} : X \to X'$ over a normal threefold with non-equidimensional fibers with $\tau=2$.
\item [(e)] The blow-up $\phi_{\tau} : X \to X'$ of a smooth fourfold at $t \ge 1$ points, with exceptional divisors $E_i \cong \P^3$ such that $H_{|E_i} \cong \O_{\P^3}(1), 1 \le i \le t$ and $\tau=3$.
\end{itemize}
\end{lemma}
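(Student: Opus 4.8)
The plan is to derive the whole list from adjunction theory: the classification of smooth polarized fourfolds $(X,\O_X(1))$ with $H$ very ample according to the nef value $\tau$ and the nef value morphism $\phi_\tau$ is classical (\cite{bs2}), and the only input I need from the hypothesis is the bound $\tau \ge 2 = n-2$, which then selects exactly the cases with $\tau \in \{2,3,4,5\}$.

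I would first prove $\tau \ge 2$. As $X$ is covered by lines, a general point $x$ lies on a line $L \subseteq X \subseteq \P^N$, so $H \cdot L = 1$, and Remark \ref{russo} gives $-K_X \cdot L = \dim_{[L]} F_1(X,x) + 2 \ge 2$. Thus $K_X \cdot L \le -2$, so $K_X$ is not nef and $\tau$ is defined; since $K_X + \tau H$ is nef, $0 \le (K_X + \tau H) \cdot L = K_X \cdot L + \tau$ forces $\tau \ge -K_X \cdot L \ge 2$. By \cite{bs2} one also has $\tau \le n+1 = 5$ with equality only for $(\P^4, \O_{\P^4}(1))$, which is case (a), and the structure theorems there show that $\tau \ge n-2$ forces $\tau \in \{2,3,4,5\}$.

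For each remaining value of $\tau$ I would read off $\phi_\tau$ from \cite{bs2}, organized by $\dim X'$. When $\phi_\tau$ is of fibre type, the length of the contracted ray together with Wi\'sniewski's inequality bounds a general fibre by $\dim F \ge \tau - 1$, hence $\dim X' \le 5 - \tau$; combined with the descriptions of scrolls, quadric fibrations and Del Pezzo fibrations in the sense of the Definition of \S\ref{adj}, this gives: for $\tau = 4$, case (b.1) $(Q_4, \O_{Q_4}(1))$ or (b.2) a linear $\P^3$-bundle over a curve; for $\tau = 3$, case (c.1) a Del Pezzo fourfold, (c.2) a quadric fibration over a curve, or (c.3) a linear $\P^2$-bundle over a surface; and for $\tau = 2$, case (d.1) a Mukai fourfold, (d.2) a Del Pezzo fibration over a curve, (d.3) a quadric fibration over a surface, or a scroll over a normal threefold, split into (d.4) when $\phi_\tau$ is equidimensional (a genuine linear $\P^1$-bundle) and (d.5) otherwise.

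The delicate part is the birational contractions, which I expect to be the main obstacle. Evaluating $K_X + \tau H$ on a line of an exceptional divisor fixes $\tau$: at $\tau = 3$ a birational $\phi_\tau$ blows down divisors $E \cong \P^3$ with $\O_X(1)_{|E} \cong \O_{\P^3}(1)$ and $\O_E(E) \cong \O_{\P^3}(-1)$ to smooth points, giving case (e). At $\tau = 2$ the only divisorial candidate is the blow-up $\sigma : X \to X''$ of a smooth curve with $\O_X(1) = \sigma^*A - E$; but then $K_X + 2\O_X(1) = \sigma^*(K_{X''} + 2A)$, and since $X$ being covered by lines forces $X''$ to be covered by lines, $K_{X''} + 2A$ is not ample, so $\phi_\tau$ contracts strictly more than $E$ and is in fact of fibre type, landing back in (d.1)--(d.5). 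This exclusion of a purely birational contraction at $\tau = 2$, together with the verification that the listed cases are mutually exclusive via the pair $(\tau, \dim X')$, are the steps requiring care; the remaining identifications of bases and fibres are routine once $\tau$ and $\dim X'$ are known.
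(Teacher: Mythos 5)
Your overall route (prove $\tau\ge 2$ from the covering lines, then run through Beltrametti--Sommese adjunction theory) is the same as the paper's, but your reduction contains a genuine gap. It is not true that ``the only input needed from the hypothesis is the bound $\tau \ge 2$'', nor that the structure theorems of \cite{bs2} show that $\tau \ge n-2$ forces $\tau\in\{2,3,4,5\}$: the nef value of a very amply polarized fourfold need not be an integer, and $(\P^4,\O_{\P^4}(2))$ has $\tau=5/2\ge 2$ while appearing nowhere in the list (a)--(e). What excludes such pairs is a second, essential use of the covering-by-lines hypothesis, and this is exactly the step in the paper's proof that your outline omits: once $K_X+4H$ and then $K_X+3H$ are known to be ample, one has $\tau<3$, so $(X,\O_X(1))$ equals its own first reduction, and \cite[Prop.~7.3.4]{bs2} lists the finitely many first reductions with $K_X+2H$ not nef; these (e.g.\ $(\P^4,\O_{\P^4}(2))$) carry no lines at all in the given embedding (a line would have odd degree against a divisible polarization), so they are ruled out by the hypothesis and only then does one conclude $\tau=2$. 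As written, your argument cannot distinguish $(\P^4,\O_{\P^4}(2))$ from the listed cases, so it does not prove the lemma.

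A second, smaller defect is the treatment of the birational case at $\tau=2$. Your claim that ``the only divisorial candidate is the blow-up of a smooth curve'' is unjustified and false: contractions with $K_X+2H$ trivial on the contracted curves also include point contractions with exceptional divisor $E\cong\P^3$, $N_{E/X}\cong\O_{\P^3}(-2)$, $H_{|E}\cong\O_{\P^3}(1)$, or $E\cong Q_3$, $N_{E/X}\cong\O_{Q_3}(-1)$, $H_{|E}\cong\O_{Q_3}(1)$, and your ad hoc argument (whose intermediate assertion that $X''$ is ``covered by lines'' is also inaccurate, since images of lines meeting $E$ are not lines) does not address them. No case analysis is needed: at $\tau=2$ the covering lines satisfy $K_X\cdot L=-2$, hence $(K_X+2H)\cdot L=0$, so the nef divisor $K_X+2H$ is trivial on a covering family of curves, hence not big, and $\phi_\tau$ is automatically of fiber type. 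This is how the paper proceeds, after which \cite[Prop.~7.5.3 and Thm.~14.2.3]{bs2} give (d.1)--(d.3) or a scroll over a normal threefold, split into (d.4) and (d.5) by equidimensionality via \cite[Prop.~3.2.1]{bs2}, as in your last step; for (e) at $\tau=3$ the paper also needs \cite[Thm.~0.3 and Rmk.~1]{sv}, which your sketch tacitly assumes.
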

\begin{proof}
Let $x \in X$ be a general point and let $L \in F_1(X,x)$. Then
$$0 \le \dim_{[L]} F_1(X,x) = -K_X \cdot L - 2$$
so that $K_X \cdot L \le - 2$ and $\tau \ge 2$. By \cite[Prop.~7.2.2]{bs2} we have that either we are in case (a), or $\tau=4$ and we are in cases (b.1) or (b.2) or $\tau \le 4$ and $K_X+4H$ is big and nef. In the latter case $K_X+4H$ is ample by \cite[Prop.~7.2.3]{bs2} and $\tau \le 3$ by \cite[Prop.~7.2.4]{bs2}. Moreover \cite[Prop.~7.3.2]{bs2} gives that $K_X+3H$ is ample unless $\tau = 3$ and either we are in one of the cases (c.1)-(c.3) or (e) (for (c.3) use also \cite[Thm.~0.2]{sv} and for (e) use also \cite[Thm.~0.3 and Rmk.~1]{sv}). Next if $K_X+3H$ is ample then $\tau < 3$ and $(X,\O_X(1))$ is isomorphic to its first reduction (see \cite[Def. 7.3.3]{bs2}). Therefore \cite[Prop.~7.3.4]{bs2} implies that $\tau = 2$, so that $K_X \cdot L = - 2$, hence $K_X+2H$ is nef and not big. It follows by \cite[Prop.~7.5.3 and Thm.~14.2.3]{bs2} that either we are in one of the cases (d.1)-(d.3) or $(X,\O_X(1))$ is a scroll under $\phi_{\tau} : X \to X'$ over a normal threefold. Finally in the latter case if $\phi_{\tau}$ has equidimensional fibers, then we are in case (d.4) by \cite[Prop.~3.2.1]{bs2}, otherwise we are in case (d.5).
\end{proof}

\subsection{Proofs}

To this end we will use the notation in \ref{not3} and in Definition \ref{not}.

\renewcommand{\proofname}{Proof of Theorem \ref{main3}}
\begin{proof}
Suppose that $(X,\O_X(1),\E)$ is as in (i)-(xi). Then, using, in the corresponding cases, Remark \ref{kno1}, \cite[Prop.~3.3(iii)]{lms}, \cite[Lemma 4.1]{lo}, \cite[(3.5)]{b1} and Remark \ref{gen}(iv), we see that $\E$ is Ulrich not big.

Now assume that $\E$ is Ulrich not big.

It follows by \cite[Thm.~1]{lo} that $X$ is covered by lines, hence Lemma \ref{aggiu} gives that $(X,\O_X(1))$ belongs to one of the cases (a)-(e) in Lemma \ref{aggiu}. We will divide the proof according to these cases.

If $(X,\O_X(1))$ is as in (a), we are in case (i) by Remark \ref{kno1}. 

If $(X,\O_X(1))$ is as in (b.1), we are in case (xi) by \cite[Prop.~3.3(iii)]{lms}.

Therefore we can assume from now on that $(X,\O_X(1))$ is neither as in (a) nor as in (b.1). 

For the rest of the proof $x \in X$ will denote a general point. 

We will now divide the proof into several subcases and claims.

\noindent {\bf Case (A)}: $c_1(\E)^4 > 0$.

We claim that
\begin{equation}
\label{ellesse}
\dim F_1(X,x) \ge r+3-\nu(\E) \ge 1.
\end{equation}
In fact, set $k=r+3-\nu(\E)$. It follows from \cite[Cor.~2]{ls} that we can find a $1$-dimensional family $T$ of $k$-dimensional linear spaces $M_t, t \in T$, with $x \in M_t \subseteq X$. Consider the incidence correspondence 
$$\mathcal J =\{([L], t) \in F_1(X,x) \times T : L \subseteq M_t\}.$$ 
The second projection shows that $\mathcal J$ is irreducible of dimension $k$. Next, let $B=\bigcap_{t \in T}M_t$. Since $B \subseteq M_t$ for every $t \in T$ and $\dim T=1$, then $\dim B < k$. On the other hand, $x \in B$, hence choosing a point $x' \in M_t \setminus B$, we can find a line $L_0 =  \langle x, x' \rangle$, with $[L_0] \in F_1(X,x)$ and such that $L_0 \not\subseteq B$. Therefore, since $\dim T=1$, there are finitely many $t \in T$ such that $L_0 \subset M_t$. Thus, the first projection $p: \mathcal J \to F_1(X,x)$ has $0$-dimensional general fibers over $p(\mathcal J)$. Therefore $\dim F_1(X,x) \ge \dim p(\mathcal J)=k$. This proves \eqref{ellesse}. 

Now we study the cases (b.2) and (c.3). To unify notation, we denote $\phi_{\tau}$ by $p:X \to B$. First, observe that \cite[Thm.~1.4]{lp} and \eqref{ellesse} give that
\begin{equation}
\label{acc}
r+1 \le \nu(\E) \le r+2 \ \hbox{in case (b.2) and} \ \nu(\E) = r+2  \ \hbox{in case (c.3)}.
\end{equation}
\renewcommand{\proofname}{Proof}
We first show that we can apply \cite[Lemma 4.4]{lms}, that we recall here for the reader's sake.

\begin{lemma}
\label{nu=b+r}
Let $(X,\O_X(1))=(\P(\F), \O_{\P(\F)}(1))$, where $\F$ is a rank $n-b+1$ very ample vector bundle over a smooth irreducible variety $B$ of dimension $b$ with $1 \le b \le n-1$. Let $\E$ be a rank $r$ Ulrich vector bundle on $X$, let $p : X \to B$ be the projection morphism and suppose that
\begin{equation}
\label{cos}
p_{|\Pi_y} : \Pi_y \to B \ \hbox{is constant for every} \ y \in \varphi(\P(\E)).
\end{equation}
Then, for every fiber $f$ of $p$ we have $\nu(\E)=b+\dim \varphi(\pi^{-1}(f)) \ge b+r-1$. Moreover we have the following two extremal cases:
\begin{itemize}
\item [(i)] If $\nu(\E) = b+r-1$ there is a rank $r$ vector bundle $\G$ on $B$ such that $\E \cong p^*(\G(\det \F))$ and
$H^j(\G \otimes S^k \F^*)=0$ for all $j \ge 0, 0 \le k \le b-1$. 
\item [(ii)] If $\nu(\E) = b+r$ then either $b=n-1$ and $\E$ is big or $b \le n-2$ and $\E_{|f} \cong T_{\P^{n-b}}(-1)\oplus \O_{\P^{n-b}}^{\oplus (r-n+b)}$ for any fiber $f=\P^{n-b}$ of $p$.
\end{itemize}
\end{lemma}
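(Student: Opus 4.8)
The plan is to read off $\nu(\E)$ from the geometry of the map $\varphi$ and then analyse the two extremal values separately. Since $\E$ is Ulrich it is globally generated (Remark \ref{gen}(i)), so $\O_{\P(\E)}(1)$ is nef and globally generated and $\nu(\E)=\dim\varphi(\P(\E))$. Because the evaluation $H^0(\E)\to\E_x$ is surjective, $\varphi$ restricts to a linear embedding on each fibre $\P(\E_x)$; hence $\pi_{|\varphi^{-1}(y)}\colon\varphi^{-1}(y)\to\Pi_y$ is injective and $\dim\varphi^{-1}(y)=\dim\Pi_y$ for every $y$. As $\dim\P(\E)=n+r-1$, counting dimensions gives, for general $y$,
$$\nu(\E)=(n+r-1)-\dim\Pi_y.$$
Hypothesis \eqref{cos} is exactly the statement that $\Pi_y$ lies in a single fibre $f\cong\P^{n-b}$ of $p$, so $\dim\Pi_y\le n-b$ and therefore $\nu(\E)\ge b+r-1$.

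For case (i), $\nu(\E)=b+r-1$ forces $\dim\Pi_y=n-b$, i.e.\ $\overline{\Pi_y}=f$ for general $y$. I would use this to show that $\Phi$ is constant on the fibres of $p$: for general $x$ and general $y\in P_x$ one has $x\in\Pi_y$, whence $\overline{\Pi_y}=f_x$; then for general $x'\in f_x$ the same $y$ lies in $P_{x'}$, forcing $P_x\subseteq P_{x'}$ and hence $P_x=P_{x'}$. Thus $\Phi$, and so $\widetilde\Phi$, factors through $p$. Comparing dimensions identifies $\widetilde\Phi$ with $p$: from $\E\cong\widetilde\Phi^*\H$ one gets $\P(\E)=X\times_{\widetilde{\Phi(X)}}\P(\H)$, so $\nu(\E)=\nu(\H)\le\dim\widetilde{\Phi(X)}+r-1$, giving $\dim\widetilde{\Phi(X)}=b$. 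By Lemma \ref{not2} then $\E\cong p^*\H$; putting $\G:=\H\otimes(\det\F)^{-1}$ yields $\E\cong p^*(\G(\det\F))$. The stated vanishing is a relative-duality computation: for $t\ge n-b+1$ the only nonzero higher image is $R^{n-b}p_*\O_{\P(\F)}(-t)\cong S^{t-n+b-1}\F^*\otimes(\det\F)^{-1}$, so by the projection formula and Leray $H^i(\E(-t))\cong H^{i-(n-b)}(\G\otimes S^{t-n+b-1}\F^*)$. Running $t$ over $n-b+1\le t\le n$, i.e.\ $0\le k:=t-n+b-1\le b-1$, turns the Ulrich vanishing $H^i(\E(-t))=0$ into $H^j(\G\otimes S^k\F^*)=0$ for all $j\ge0$; alternatively one may simply invoke Lemma \ref{ii} once $\widetilde\Phi=p$ is known to be equidimensional.

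For case (ii), $\nu(\E)=b+r$ means $\dim\Pi_y=n-b-1$. If $b=n-1$ the fibre $f$ is a line, so $\dim\Pi_y=0$ and $\nu(\E)=n+r-1=\dim\P(\E)$; thus $\varphi$ is generically finite onto its image and $\O_{\P(\E)}(1)$, hence $\E$, is big. If $b\le n-2$, I would restrict to a general fibre $f\cong\P^m$ with $m=n-b\ge2$. Hypothesis \eqref{cos} localises, since $\Pi_y\subseteq f$, so $\E_{|f}$ is globally generated of rank $r$ with $\nu(\E_{|f})=(m+r-1)-(m-1)=r$. The desired conclusion $\E_{|f}\cong T_{\P^m}(-1)\oplus\O_{\P^m}^{\oplus(r-m)}$ should then follow from the classification of globally generated bundles on $\P^m$ of this minimal positive numerical dimension (equivalently $c_1=1$), the delicate point being to exclude a summand $\O_{\P^m}(1)$.

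I expect the inequality and case (i) to be routine: a dimension count followed by a standard cohomology-and-base-change argument. The main obstacle is case (ii) with $b\le n-2$, where pinning down $\E_{|f}$ exactly requires the fibrewise structure theory of globally generated bundles whose numerical dimension equals the rank. This is the genuinely technical step and the reason the statement is quoted from \cite{lms} rather than reproved here.
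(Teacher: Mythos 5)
Your inequality and your case (i) are essentially sound, and your route there is the natural one. Two remarks would make (i) airtight: since every nonempty fiber of $\varphi$ has dimension at least $\dim \P(\E)-\nu(\E)=n-b$ when $\nu(\E)=b+r-1$, hypothesis \eqref{cos} forces $\Pi_y$ to be a \emph{whole} fiber of $p$ for \emph{every} $y$, so the ``general $y$'' bookkeeping disappears and $\Phi$ is constant on all fibers of $p$; and once $\widetilde \Phi = h \circ p$ you do not actually need to identify $\widetilde \Phi$ with $p$, because Lemma \ref{not2} already gives $\E \cong \widetilde \Phi^*\H \cong p^*(h^*\H)$, after which your Leray/relative-duality computation (equivalently Lemma \ref{ii}, or \cite[Lemma 4.1]{lo}) yields the stated vanishings. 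The subcase $b=n-1$ of (ii) is also correct. Bear in mind that the paper does not prove this lemma at all: it recalls it from \cite[Lemma 4.4]{lms}, and the visible traces of that proof appear in Claim \ref{p3bdle}.

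The genuine gap is case (ii) with $b\le n-2$, which is exactly the part you defer, and it cannot be deferred to an off-the-shelf classification. Two things are missing. First, your dimension count controls $\E_{|f}$ only for a \emph{general} fiber, while the statement is for \emph{any} fiber; passing to special fibers is done via the incidence correspondence $\I=\{(y,v): y \in \varphi(\pi^{-1}(p^{-1}(v)))\}$ (reproduced in Claim \ref{p3bdle}): by \eqref{cos} the projection $\I \to \varphi(\P(\E))$ is bijective, so $\dim \I = b+r$, and since every fiber of $\I \to B$ has dimension at least $r$, while \eqref{cos} together with the lower bound $\dim \Pi_y \ge n-b-1$ caps it at $r$, one gets $\dim\varphi(\pi^{-1}(f))=r$ for \emph{every} fiber $f$. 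Second, and more seriously, your reduction ``$\nu(\E_{|f})=r$, equivalently $c_1=1$'' is not a valid equivalence: $c_1=1$ does not imply $\nu=r$ (witness $\O_{\P^m}(1)\oplus\O_{\P^m}^{\oplus (r-1)}$, which has $\nu=m+r-1$), and in the direction you need, $\nu(\E_{|f})=r$ only says that $\varphi(\pi^{-1}(f))$ is an $r$-dimensional variety swept out by the $\P^{r-1}$'s $P_x$, whence $c_1(\E_{|L})=\deg\varphi(\pi^{-1}(f))$ for every line $L\subset f$ (this uses surjectivity of $H^0(\E)\to H^0(\E_{|L})$, an Ulrich-specific fact from \cite{ls}, not a consequence of global generation). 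To conclude $c_1=1$ one must first prove that this image is a linear $\P^r$, ruling out cones, scrolls over curves and quadrics by a Severi-type argument as in \eqref{sev} (for instance, a scroll structure would force $f=\P^{n-b}$ to dominate a curve, which is absurd); only then do \cite[Prop.~IV.2.2]{e} or \cite[Thm.~1]{su} apply, and the summand $\O_{\P^m}(1)$ is excluded precisely because $\O_{\P^m}(1)\oplus\O_{\P^m}^{\oplus(r-1)}$ would give $\dim\varphi(\pi^{-1}(f))=m+r-1>r$. None of these steps appears in your proposal, so case (ii) -- the heart of the lemma, and the place where linearity of the fibers of $\varphi$ from \cite[Thm.~2]{ls} genuinely enters -- remains unproved.
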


\begin{claim} 
\label{sato}
In cases (b.2) and (c.3) we have that \eqref{cos} holds for $p:X \to B$.
\end{claim}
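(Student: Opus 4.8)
The plan is to reformulate \eqref{cos} as a cohomological separation‑of‑points statement and then to establish it from the fibration $p$ together with the bigness of $\det\E$.

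First I would unwind the meaning of \eqref{cos}. Since $\E$ is globally generated, the evaluation $H^0(\E)\twoheadrightarrow\E_x$ dualizes to an inclusion $\E_x^*\hookrightarrow H^0(\E)^*$, and $P_x$ is the projectivization of $A_x:=\Im(\E_x^*\to H^0(\E)^*)$, an $r$‑dimensional subspace; hence $P_x\cap P_{x'}=\P(A_x\cap A_{x'})$. Because $\Pi_y=\{x:y\in P_x\}$, the requirement that $p|_{\Pi_y}$ be constant for every $y$ is equivalent to asking that $P_x\cap P_{x'}=\emptyset$ whenever $p(x)\ne p(x')$. In turn $P_x\cap P_{x'}=\emptyset$ iff $A_x\cap A_{x'}=0$ iff the evaluation $\ev_{x,x'}:H^0(\E)\to\E_x\oplus\E_{x'}$ is surjective. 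Writing $Z=\{x,x'\}$ and using $H^1(\E)=0$ (valid for $n\ge 2$ by Remark \ref{gen}(iii)), surjectivity of $\ev_{x,x'}$ is equivalent to the vanishing $H^1(\E\otimes\I_Z)=0$. Thus the Claim reduces to proving
\[
H^1\bigl(X,\E\otimes\I_{\{x,x'\}}\bigr)=0\qquad\text{whenever } p(x)\ne p(x').
\]

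Second I would prove this vanishing by separating $x$ and $x'$ with a divisor pulled back from $B$. Fix a very ample $A$ on $B$ and take $D=p^{-1}(D_B)$ with $D_B\in|A|$ passing through $p(x')$ but not $p(x)$ (possible since $p(x)\ne p(x')$). Global generation of $\E$ at $x'$ (Remark \ref{gen}(i)) accounts for the $\E_{x'}$‑factor, so it suffices that the sections vanishing on $D$ generate $\E_x$, i.e. that $\E\otimes p^*\O_B(-A)$ is globally generated at $x$. Pushing forward and using that $\E$ is ACM — so that $\E|_f$ has no interfering higher cohomology on a fibre $f=\P^{n-b}$ and $p_*\E$ commutes with base change — this reduces to global generation of $(p_*\E)\otimes\O_B(-A)$ at the point $p(x)$. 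Since on fibres $(\E\otimes p^*\O_B(-A))|_f=\E|_f$ is already globally generated, the only remaining issue is positivity in the $B$‑direction.

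The crux, and the place where the hypothesis of Case~(A) must enter, is therefore the positivity of $p_*\E$ on $B$: one must show $(p_*\E)\otimes\O_B(-A)$ is globally generated at the general — and then, after treating the closed bad locus, at every — point. This is exactly where $c_1(\E)^4>0$ is used, since it forces $\widetilde{\Phi}$ to be generically finite and $\det\E$ to be big, positivity that should propagate to $p_*\E$, and it is where the numerical bounds \eqref{acc} from \cite[Thm.~1.4]{lp} together with the maximal nef value $\tau=n-b+1$ of the projective bundle should be exploited. I expect this positivity to be the main obstacle: in case~(b.2), where $B$ is a curve, it is a one‑dimensional statement about a bundle on a curve and should be manageable, whereas in case~(c.3), where $B$ is a surface, it becomes a genuinely two‑dimensional global‑generation question for $p_*\E$. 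Note that the Dichotomy Lemma~\ref{dico} is unavailable here, as it requires $c_1(\E)^n=0$, so this input cannot be routed through the fibre‑comparison arguments used elsewhere. Finally I would upgrade the conclusion from general pairs to all pairs $x\ne x'$ in distinct fibres by semicontinuity, using the irreducibility of the incidence $\{(x,x'):P_x\cap P_{x'}\ne\emptyset\}$ over $B\times B$.
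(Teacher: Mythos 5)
Your opening reduction is correct and clean: since $\E$ is globally generated and $H^1(\E)=0$, condition \eqref{cos} is equivalent to the separation statement that $H^0(\E)\to\E_x\oplus\E_{x'}$ is surjective (equivalently $P_x\cap P_{x'}=\emptyset$) whenever $p(x)\ne p(x')$, and the divisor trick correctly reduces this to global generation of $\E\otimes p^*\O_B(-A)$ at $x$. But at that point the proof stops: the crucial positivity statement --- that $(p_*\E)\otimes\O_B(-A)$, equivalently $\E\otimes p^*\O_B(-A)$, is globally generated --- is never proved, only hoped for. Bigness of $\det\E$ cannot supply it: bigness is a birational notion, perfectly compatible with the absence of any positivity of $\det\E$ (let alone of $p_*\E$) along special curves or at special points of $B$, whereas you need surjectivity of an evaluation map at \emph{every} point after a negative twist. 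Moreover the statement you need is strictly stronger than Claim \ref{sato} itself: once \eqref{cos} is known, Lemma \ref{nu=b+r} yields the structure of $\E$ (either $\E\cong p^*(\G(\det\F))$ with prescribed vanishings, or $\E_{|f}\cong T_{\P^{n-b}}(-1)\oplus\O_{\P^{n-b}}^{\oplus(r-n+b)}$ on every fiber), and it is only from such structure that positivity of $p_*\E$ could be extracted; so the proposed route is essentially circular. Two further, smaller, gaps: ACM-ness controls $H^i(\E(j))$ on $X$, not the cohomology of $\E_{|f}$ or constancy of $h^0(\E_{|f})$, so the cohomology-and-base-change step is not justified; and the final upgrade ``from general pairs to all pairs'' does not follow from semicontinuity plus irreducibility of the incidence set (which need not be irreducible) --- the correct mechanism, used in the paper, is that $\Pi_{y}$ is a linear space, so $p_{|\Pi_{y_0}}$ is constant as soon as it has a single positive-dimensional fiber.

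For comparison, the paper's proof is geometric rather than cohomological: \cite[Cor.~2]{ls} gives $\dim\Pi_y\ge r+3-\nu(\E)$, which settles $\nu(\E)=r+1$ at once (a linear space of dimension $\ge 2$ admits no nonconstant map to a curve); when $\nu(\E)=r+2$ one assumes some $\Pi_{y_0}$ is not contained in a fiber of $p$, propagates this to general $y$, and obtains two \emph{disjoint} components $W$ (lines in $f_x$ through $x$) and $W'$ (lines of type $\Pi_y$) of the smooth scheme $F_1(X,x)\subset\P(T_xX)=\P^3$. In case (b.2), $W$ is a plane and must meet the curve $W'$, a contradiction; in case (c.3) the contradiction comes from the second fundamental form (via Ein's non-defectivity and Landsberg's theorem) together with Sato's theorems and Lazarsfeld's theorem, forcing $X\cong\P^2\times\P^2$, which is excluded. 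None of that input is replaced by anything in your argument, so the central step of the Claim remains unproven.
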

\begin{proof} 
If $\nu(\E) = r+1$ we are in case (b.2) and for every $y \in \varphi(\P(\E))$ we get that 
$$\dim \Pi_y \ge r+3-\nu(\E) = 2.$$ 
Hence  \eqref{cos} holds for $p : X \to B$. 

Therefore we can assume that $\nu(\E) = r+2$ by \eqref{acc}. 

Arguing by contradiction assume that there is a $y_0 \in \varphi(\P(\E))$ such that $\Pi_{y_0}$ is not contained in a fiber of $p$. Then the same holds for a general $y \in \varphi(\P(\E))$: In fact, if $\Pi_y$ is contained in a fiber of $p$, then, by specialization $\dim \Pi_{y_0} \cap f_{x_0} \ge \dim \Pi_y \cap f_x = 1$, where $y_0 \in P_{x_0}$ and $y \in P_x$. Now $\Pi_{y_0}$ is a linear space of positive dimension by \cite[Thm.~2]{ls} and $p_{|\Pi_{y_0}} : \Pi_{y_0} \to B$ contracts $\Pi_{y_0} \cap f_{x_0}$ to a point, hence is constant, that is $\Pi_{y_0} \subseteq f_{x_0}$, a contradiction. Therefore $\Pi_y \not\subset f_x$ for a general $y \in \varphi(\P(\E))$. Then $F_1(X,x)$ has at least two irreducible components, namely $W$ made of lines in $f_x$ through $x$ and $W'$ made of lines of type $\Pi_y$, with $\dim W' \ge 1$ by \cite[Cor.~2]{ls}. Moreover $F_1(X,x)$ is smooth, hence $W \cap W' = \emptyset$. As is well known, $F_1(X,x) \subset \P^3 = \P(T_xX)$. In case (b.2) we have that $W$ is a plane, thus giving a contradiction. In case (c.3) we have that $W$ is a line and $W'$ is a curve. We now use the fact that $F_1(X,x)$ is contained in the base locus of the linear system $|II|$ given by the second fundamental form of $X$ at $x$. Observe that $X$ is not defective by \cite[Thm.~3.3(b)]{ei}, hence \cite[Thm.~7.3(i)]{lan} gives that there is a smooth quadric $Q \in |II|$. Since $W \sqcup W' \subset Q$ we find that $W'$ is a union of lines. But $X \subset \P^N$ and a line $L$ component of $W'$ is also a line in ${\mathbb G}(1,N)$, hence the union of the lines representing points of $L$ gives a plane $M_x$ such that $x \in M_x \subset X$. It follows by \cite[Main Thm.]{sa2} that $X$ is a linear $\P^k$-bundle $p' : X \to B'$ over a smooth $B'$ so that its general fibers contain the $M_x$'s. On the other hand, it cannot be that $k \ge 3$ for otherwise on any $\P^k$ we would have that $p_{|\P^k} : \P^k \to B$ is constant, thus giving the contradiction that $p$ has a fiber of dimension $k$. Therefore $k=2$ and, by construction, the $\P^2$-bundle structure $p'$ is different from $p$. Now on any fiber $f'$ of $p'$ we have that $p_{|f'} : f' \to B$ cannot be constant, otherwise $f'$ is also a fiber of $p$, and it follows by \cite[Thm.~4.1]{laz2} that $B \cong \P^2$. Similarly, $B' \cong \P^2$. But then \cite[Thm.~A]{sa1} implies that $(X,\O_X(1))=(\P^2 \times \P^2, \O_{\P^2}(1) \boxtimes \O_{\P^2}(1))$, a contradiction.
\end{proof} 
\renewcommand{\proofname}{Proof}

\begin{claim} 
\label{p3bdle}
In case (b.2) we have that $(X, \O_X(1), \E)$ is as in (xii).
\end{claim}
\begin{proof}
Note that  \eqref{cos} holds for $p : X \to B$ by Claim \ref{sato}.

If $\nu(\E) = r+1$ we can apply Lemma \ref{nu=b+r} and we are in case (xii).

By \eqref{acc} it remains to study the case $\nu(\E) = r+2$.
 
Then, Lemma \ref{nu=b+r} gives 
$$\dim \varphi(\pi^{-1}(f)) = \nu(\E)-1=r+1$$ 
for every fiber $f$ of $p$. Next note that the morphism $\Phi_{|f} : f=\P^3 \to {\mathbb G}(r-1, \P H^0(\E))$ cannot be constant, for otherwise $P_x=P_{x'}$ for any $x, x' \in f$, giving the contradiction $\varphi(\pi^{-1}(f)) = P_x = \P^{r-1}$.  Hence $\Phi_{|f}$ is finite onto its image and therefore $\varphi(\pi^{-1}(f)) \subseteq \P H^0(\E)$ is swept out by a family $\{P_x, x \in f\}$ of dimension $3$ of linear $\P^{r-1}$'s. We will now show, along the lines of \cite[Lemma p.~44]{se}, that 
\begin{equation}
\label{sev}
\hbox{either} \ \varphi(\pi^{-1}(f))=\P^{r+1} \ \hbox{or} \ \varphi(\pi^{-1}(f))=Q_{r+1}, \ \hbox{a quadric of rank at least} \ 5.
\end{equation}
Set $Y_f= \varphi(\pi^{-1}(f))$. Since $\dim Y_f = r+1$, if $y \in Y_f$ is general, there is a $1$-dimensional family $\{P_t, t \in T\}$ of $\P^{r-1}$'s through $y$. Let $\mathcal C_y = \bigcup_{t \in T} P_t$ be the corresponding cone. Then $\dim \mathcal C_y = r$ and set $c = \deg \mathcal C_y$. Let $H_1, \ldots, H_{r-1}$ be general hyperplanes. Then the surface $S_f:= Y_f \cap H_1 \cap \ldots \cap H_{r-1}$ is such that there are $c$ lines contained in $S_f$ and passing through its general point. As is well known, it follows that either $c=1$ and $S_f$ is a scroll or a plane or $c=2$ and $S_f$ is a quadric. Assume now that $Y_f \ne \P^{r+1}$. When $c=1$ we get that $\mathcal C_y=\P^r$, hence $Y_f$ is a scroll in $\P^r$'s. Also, when $c=2$ we have that $Y_f=Q$ is a quadric. If $Y_f$ is a scroll, then the composition of $\Phi_{\E_{|f}}$ with the Pl\"ucker embedding maps $f = \P^3$ to another scroll in $\P^r$, say $Z_f$, of dimension $r+1$ over some curve $\Gamma$, whose $\P^r$'s are the dual of the ones in $Y_f$. Thus we get a map $g : f=\P^3 \to Z_f$ and $g$ lifts to the normalization $\nu: \P(\G) \to Z_f$, where $\G$ is a vector bundle over $\Gamma$. But this gives the contradiction that $f= \P^3$ dominates $\Gamma$. The same argument can be applied if ${\rm rk} Q \le 4$ since we can see it as a scroll in $\P^r$ over $\P^1$. This proves \eqref{sev}.

We now claim that $c_1(\E_{|L})>0$ for every line $L \subset f$. To see the latter, since $\E$ is globally generated, we can suppose that there is a line $L_0 \subset f$ such that $c_1(\E_{|L_0})=0$. Then the same holds on any line $L \subset f$, hence $\Phi_{\E_{|L}}$ is constant. On the other hand, since $\dim \varphi(\pi^{-1}(f)) = r+1$, there exist $x_1, x_2 \in f$ such that $P_{x_1} \ne P_{x_2}$ hence on $L' = \langle x_1, x_2 \rangle$ we have that $\Phi_{\E_{|L'}}$ is not constant. This proves that $c_1(\E_{|L})>0$ for every line $L \subset f$. Now for every line $L \subset f$ we have that $H^0(\E) \to H^0(\E_{|L})$ is surjective by \cite[Lemma 3.2]{ls}, hence $\varphi(\pi^{-1}(L))$ is a rational normal scroll. Then $h^0(\E_{|L}) \le r+3$ by \eqref{sev} and $1 \le c_1(\E_{|L}) \le 3$. Furthermore, if $c_1(\E_{|L})=3$ then $\varphi(\pi^{-1}(L))$ has codimension $1$ in $Q = \varphi(\pi^{-1}(f))$. Note that $r \ge 2$ since $\E$ is not big and $c_1(\E)^4>0$, hence intersecting with general hyperplanes $H_i, 1 \le i \le r-2$ we get a surface $\varphi(\pi^{-1}(L)) \cap H_1 \cap \ldots \cap H_{r-2}$ of degree $3$ inside a smooth quadric in $\P^4$, a contradiction. This gives that $1 \le c_1(\E_{|L}) \le 2$. 

If $c_1(\E_{|L}) = 1$ then $\E_{|L} \cong \O_{\P^1}(1) \oplus \O_{\P^1}^{\oplus (r-1)}$, hence $c_1(\E_{|f}) = 1$ and \cite[Prop.~IV.2.2]{e} implies that $\E_{|f}$ is either $T_{\P^3}(-1) \oplus \O_{\P^3}^{\oplus (r-3)}, r \ge 3$ or $\O_{\P^3}(1) \oplus \O_{\P^3}^{\oplus (r-1)}$ or $\Omega_{\P^3}(2) \oplus \O_{\P^3}(1)^{\oplus (r-3)}, r \ge 3$. The first case does not occur since then \eqref{sev} implies the contradiction $r+2 \le h^0(\E_{|f}) = h^0(T_{\P^3}(-1) \oplus \O_{\P^3}^{\oplus (r-3)})=r+1$. In the second case we have that $\E_{|f}$ is big, contradicting the fact that $\dim \varphi(\pi^{-1}(f)) = r+1$. Also $c_1(\Omega_{\P^3}(2) \oplus \O_{\P^3}(1)^{\oplus (r-3)}) = r-1 \ge 2$, hence the third case does not occur. It follows that $c_1(\E_{|f}) = 2$ and therefore $\E_{|f}$ is as in (i)-(vii) of \cite[Thm.~1]{su}. Now $r+2 \le h^0(\E_{|f}) \le r+3$ by \eqref{sev}, hence cases (i)-(iii) and (vii) of \cite[Thm.~1]{su} are excluded. Thus $(X, \O_X(1),\E)$ is as in (xii).
\end{proof} 
\renewcommand{\proofname}{Proof}

\begin{claim} 
\label{p2bdle}
In case (c.3) we have that $(X, \O_X(1), \E)$ is as in (xiii).
\end{claim}
\begin{proof}
By \eqref{acc} and Claim \ref{sato} we can apply Lemma \ref{nu=b+r}(ii) and we are in case (xiii). 
\end{proof} 
\renewcommand{\proofname}{Proof}

To conclude the proof of case (A) we can assume that we are neither in case (b.2) nor in case (c.3). 

Again \cite[Thm.~1.4]{lp}, the classification of del Pezzo $4$-folds (see for example \cite[\S 1]{lp}, \cite{fu1}) and \eqref{ellesse} give that $\nu(\E) = r+2, \dim F_1(X,x)=1$ and either we are in case (c.2) or in one of the following cases for $(X,\O_X(1))$:
\begin{itemize}
\item [(A.1)] a cubic hypersurface $\P^5$.
\item [(A.2)] a complete intersection of two quadrics in $\P^6$.
\item [(A.3)] a linear section $\mathbb G(1,4) \cap H_1 \cap H_2$, where $\mathbb G(1,4) \subset \P^9$ in the Pl\"ucker embedding and $H_i  \subset \P^9$ are hyperplanes $i=1,2$.
\item [(A.4)] $(\P^2 \times \P^2, \O_{\P^2}(1) \boxtimes \O_{\P^2}(1))$.
\end{itemize}  

Moreover note that in cases (A.1)-(A.3), or in case (c.2), we have that $F_2(X,x) = \emptyset$. In fact, if not, being a closed condition, we would get that $F_2(X,x') \ne \emptyset$ holds on any point $x' \in X$. Now \cite[Main Thm.]{sa2} gives that $X$ is a linear $\P^2$-bundle over a smooth surface, hence \cite[Main Thm.]{fu2} implies that  we are in case (c.3), a contradiction. Therefore we can apply \cite[Prop.~4.6]{lms} and deduce that there is a morphism $\psi: \P^{r-1} \to F_1(X,x)$ that is finite onto its image and 
$$1 = \dim F_1(X,x) \ge r-1$$ 
hence $r \le 2$. On the other hand, $\E$ is not big and therefore $r=2$. 

\begin{claim} 
\label{a1-3}
Cases (A.1), (A.2), (A.3) and (A.4) do not occur.
\end{claim}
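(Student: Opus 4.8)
The plan is to dispose of the four del Pezzo fourfolds separately, exploiting the reduction already in force: $r=2$, $\nu(\E)=r+2=4$, $\dim F_1(X,x)=1$ for general $x$, and the morphism $\psi : \P^{r-1}=\P^1 \to F_1(X,x)$ of \cite[Prop.~4.6]{lms}, which is finite onto its image. Since $\dim F_1(X,x)=1=\dim\P^1$, the image $\psi(\P^1)$ is a full one-dimensional component of $F_1(X,x)$ and, being dominated by $\P^1$, it is a \emph{rational} curve. Thus through a general point the Fano scheme of lines $F_1(X,x)$ has a rational irreducible component; the strategy is to contradict this in (A.1), (A.2), and, where it fails, to replace it by a finer argument in (A.3), (A.4). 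Recall also that for the $\rho(X)=1$ cases the Ulrich relation $c_1(\E)\cdot H^3 = rd = 2d$ forces $c_1(\E)=2H$, whence $c_1(\E)^4=16d>0$, consistent with Case (A).

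For (A.1) and (A.2) I would compute $F_1(X,x)$ inside $\P(T_xX)\cong\P^3$ by expanding the defining equations at the general, hence smooth (Remark \ref{russo}), point $x$. For the cubic $X=V(f)\subset\P^5$ one gets $F_1(X,x)=\{f_2=f_3=0\}\subset\P(T_xX)=\P^3$, a smooth complete intersection of type $(2,3)$, i.e. an irreducible curve of genus $4$; for $X=V(q_1,q_2)\subset\P^6$ one gets $F_1(X,x)=\{q_1=q_2=0\}\subset\P^3$, a smooth $(2,2)$ complete intersection, an elliptic curve. In either case every component of $F_1(X,x)$ has positive geometric genus and cannot be dominated by $\P^1$, contradicting the existence of $\psi$. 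This settles (A.1) and (A.2).

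For (A.3), where $X=\mathbb G(1,4)\cap H_1\cap H_2$, the curve $F_1(X,x)$ is the codimension-two linear section of the variety of lines of $\mathbb G(1,4)$ through a point, and it turns out to be a rational (twisted cubic) curve, so the genus obstruction disappears. Here I would instead combine $\rho(X)=1$ with the precise shape of $\psi$: the lines $L$ in $\psi(\P^1)$ satisfy $\E_{|L}\cong\O_L(1)\oplus\O_L$, and analysing $\Phi$ along the surface swept by these lines, together with $\nu(\E)=4$ (equivalently $\O_{\P(\E)}(1)^5=0$, i.e. $\E$ not big) and $c_1(\E)=2H$, should force $\E$ to be big, a contradiction.

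For (A.4)$=\P^2\times\P^2$ the situation is genuinely different: $F_2(X,x)\ne\emptyset$, $\rho(X)=2$, and $X\cong\P(\O_{\P^2}(1)^{\oplus 3})$ is a linear $\P^2$-bundle over $\P^2$ via each projection, with $\nu(\E)=r+2=\dim B+r$. If condition \eqref{cos} holds for one projection I would apply Lemma \ref{nu=b+r}(ii) to obtain $\E_{|f}\cong T_{\P^2}(-1)\oplus\O_{\P^2}^{\oplus(r-2)}$ on its fibers; running this for both projections and writing $c_1(\E)=ah_1+bh_2$ forces $a=b=1$, hence $a+b=2r=2$ and $r=1$, incompatible with the rank-$2$ summand $T_{\P^2}(-1)$. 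If instead \eqref{cos} fails, one must argue directly that no non-big Ulrich $\E$ with $c_1(\E)^4=6a^2b^2>0$ (so $a,b\ge 1$) exists, for instance by restricting $\E$ to the two rulings and to the planes through $x$ and comparing with Remark \ref{kno1}. The main obstacle is precisely (A.3) and the \eqref{cos}-failing subcase of (A.4): there $F_1(X,x)$ is rational, so one cannot argue by genus and must extract the contradiction from the interplay between the restriction of $\E$ to the minimal rational curves, the equality $\nu(\E)=4$, and the (Picard-rank-one, respectively product) geometry of $X$.
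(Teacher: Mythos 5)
Your treatment of (A.1) and (A.2) coincides with the paper's: there $F_1(X,x)$ is a smooth complete intersection of type $(2,3)$, resp.\ $(2,2)$, in $\P^3$, hence of positive genus, and cannot be dominated by $\P^1$ via $\psi$. The genuine gap is (A.3). You correctly observe that the genus obstruction disappears, but then the entire content of the case is compressed into ``should force $\E$ to be big,'' with no mechanism to produce the numerical data that bigness requires. The paper's actual argument is quite specific: $\det \E = 2H$ by \cite[Lemma 3.2]{lo}; restricting to a smooth hyperplane section $Y$ (a quintic del Pezzo threefold), $\E_{|Y}$ is Ulrich and indecomposable (since $\Pic(Y) \cong \Z$ and $Y$ carries no Ulrich line bundles), so by the Arrondo--Costa classification \cite[Thm.~3.4]{ac} together with Remark \ref{gen}(iii) and $\det(\E_{|Y})=2H_{|Y}$ it is $S_L(1)$ or $S_E(1)$; the count $h^0(\E_{|Y}) = 2\deg X = 10 \ne 12 = h^0(S_L(1))$ forces $\E_{|Y} \cong S_E(1)$, whence $c_2(\E)\cdot H^2 = 7$; finally, writing $c_2(\E) = a_1[M_1]+a_2[M_2]$ in $N^2(X)$ via \cite[Cor.~4.7]{pz}, the relation $a_1+a_2=7$ gives $s_4(\E^*) = c_1(\E)^4 - 3c_1(\E)^2\cdot c_2(\E)+c_2(\E)^2 = 5a_1^2-42a_1+94 > 0$, i.e.\ $\E$ is big, a contradiction. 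Without some such computation of $c_2(\E)$ (restriction to minimal rational curves alone does not yield it), your case (A.3) is unproven. A minor slip in the same sketch: since $\det\E = 2H$, the non-ample splitting on a line is $\O_L \oplus \O_L(2)$, not $\O_L(1)\oplus\O_L$.

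Case (A.4) is also only half done. The paper disposes of it in one line by quoting \cite[Thm.~3]{lms}. Your alternative---applying Lemma \ref{nu=b+r}(ii) to both projections and deriving $a+b=2r$ from the Ulrich relation, contradicting $a=b=1$---is a reasonable route \emph{when} \eqref{cos} holds, but the subcase where \eqref{cos} fails is exactly where you stop, and it cannot be repaired by invoking Claim \ref{sato}: the proof of that claim reaches its final contradiction from $(X,\O_X(1)) \ne (\P^2\times\P^2, \O_{\P^2}(1)\boxtimes\O_{\P^2}(1))$, so it is unavailable precisely when $X = \P^2 \times \P^2$. As it stands, both (A.3) and the \eqref{cos}-failing part of (A.4) are missing, and these are the two places where the paper's proof does real work.
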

\begin{proof} 
Case (A.4) does not occur by \cite[Thm.~3]{lms}. In cases (A.1) and (A.2) it is well known (see for example \cite[Ex.~2.3.11]{ru}) that $F_1(X,x)$ is a smooth complete intersection of type $(2,3)$ or $(2,2)$ in $\P^3$, dominated by $\P^1$ via $\psi$, a contradiction. 

In case (A.3) note that $\det \E=2H$ by \cite[Lemma 3.2]{lo}. Let $Y$ be a smooth hyperplane section of $X$. By Remark \ref{gen}(iv), $\E_{|Y}$ is Ulrich on $Y$ and it is indecomposable since $\Pic(Y) \cong \Z$ and there are no Ulrich line bundles on $Y$. It follows by \cite[Thm.~3.4]{ac} and Remark \ref{gen}(iii) that $\E_{|Y} \cong S_L(l)$ or $S_C(l)$ or $S_E(l)$ for some $l \in \Z$ (see \cite[Ex.~3.1, 3.2, 3.3]{ac} for the definition of these sheaves). Since $\det (\E_{|Y})=2H_{|Y}$ we get that $\E_{|Y} \cong S_L(1)$ or $S_E(1)$. Now $h^0(\E_{|Y})= 2 \deg X = 10$ while $h^0(S_L(1))=12$, hence this case is excluded. Therefore $\E_{|Y} \cong S_E(1)$ and we deduce that $c_2(\E) \cdot H^2 = c_2(S_E(1)) \cdot H_{|Y} = 7$. We know that $N^2(X)$ is generated by the classes of two planes $M_1, M_2$ with $M_1^2=1, M_2^2=2, M_1 \cdot M_2=-1$ by \cite[Cor.~4.7 and proof]{pz}. Hence $c_2(\E) = a_1[M_1]+a_2[M_2]$ for some $a_1, a_2 \in \Z$ and
$$7 =  c_2(\E) \cdot H^2 = (a_1[M_1]+a_2[M_2]) \cdot H^2 = a_1+a_2.$$
Therefore
$$s_4(\E^*) = c_1(\E)^4 - 3 c_1(\E)^2 \cdot c_2(\E) + c_2(\E)^2 = 5a_1^2-42a_1+94 > 0$$
giving that $\E$ is big, a contradiction. 
\end{proof} 
\renewcommand{\proofname}{Proof}

\begin{claim} 
\label{a4}
In case (c.2) we have that $(X, \O_X(1), \E)$ is as in (xiv).
\end{claim}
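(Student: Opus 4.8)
The plan is to handle case (c.2), where $(X,\O_X(1))$ is a quadric fibration $p = \phi_\tau : X \to B$ over a smooth curve $B$ with $\tau = 3$, and to show that under our standing hypotheses ($c_1(\E)^4 > 0$, not in the excluded cases, $r = 2$, $\nu(\E) = r+2 = 4$, $\dim F_1(X,x) = 1$) the triple must be as in (xiv), i.e. $\E$ is a rank $2$ relative spinor bundle whose restriction to a general fiber is a spinor bundle on that $3$-dimensional quadric. Since $B$ is a curve, a general fiber $f$ of $p$ is a smooth quadric threefold $Q_3$, and $\E_{|f}$ is Ulrich on $f$ by Remark \ref{gen}(iv). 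The core of the argument will therefore be to pin down $\E_{|f}$ as a rank $2$ Ulrich bundle on $Q_3$, and then to globalize this over the curve $B$.

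First I would analyze $\E_{|f}$ on the general fiber $f \cong Q_3$. The rank is $2$ and $\E_{|f}$ is Ulrich, hence by the known classification of low-rank Ulrich bundles on $Q_3$ (the spinor bundle $\mathcal S$ is the unique indecomposable rank $2$ Ulrich bundle, with any other rank $2$ Ulrich bundle splitting as a sum of Ulrich line bundles — but $\Pic(Q_3) \cong \Z$ and there is no Ulrich line bundle on $Q_3$) one concludes $\E_{|f}$ must be $\mathcal S$. The input that forces indecomposability is exactly the $c_1(\E)^4 > 0$ / non-split nature of the bundle together with the Picard-rank-one structure of the fiber, paralleling the argument used for (A.3) in Claim \ref{a1-3}: an Ulrich line bundle on $Q_3$ cannot exist, so a rank $2$ Ulrich bundle that is decomposable would have to split off such a line bundle, which is impossible, forcing $\E_{|f} \cong \mathcal S$. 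I would also cross-check numerically using $\nu(\E) = 4 = r+2$ and $\dim F_1(X,x) = 1$ to confirm consistency with the spinor bundle on the fibers.

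Next I would globalize: having shown $\E_{|f} \cong \mathcal S$ on every general fiber, I want to say $\E$ is a \emph{relative} spinor bundle, i.e. that the spinor bundles on the individual fibers fit together into a rank $2$ bundle on all of $X$ restricting fiberwise to a spinor bundle. This is essentially the content of the definition in (xiv): one only needs that $\E_{|f}$ is a spinor bundle on a \emph{general} fiber $f$ of $p$, which is precisely what the fiberwise analysis delivers. So the statement of (xiv) does not require a further construction — it merely records the outcome of the fiberwise identification — and I would phrase the conclusion accordingly.

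The main obstacle I anticipate is the rigorous identification of $\E_{|f}$ as a spinor bundle rather than some other rank $2$ Ulrich sheaf on $Q_3$: one must rule out decomposable possibilities cleanly, and here the key facts are $\Pic(Q_3)\cong\Z$, the nonexistence of Ulrich line bundles on $Q_3$, and the ACM/Ulrich properties from Remark \ref{gen}. A secondary subtlety is ensuring that the fiberwise conclusion is uniform enough (holding on a general, and ideally every smooth, fiber) to legitimately describe $\E$ as a relative spinor bundle in the sense of (xiv); invoking semicontinuity and the flatness of $\E$ over $B$ should suffice, but care is needed since the quadric fibration in case (c.2) need not be a projective bundle and may have special fibers. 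I expect that appealing to the restriction results of \cite{ls} (surjectivity of $H^0(\E) \to H^0(\E_{|f})$) together with the classification on $Q_3$ will close the argument, giving that $(X,H,\E)$ is exactly as in (xiv).
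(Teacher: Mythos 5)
Your proposal has a genuine gap at its foundation: you assert that $\E_{|f}$ is Ulrich on a general fiber $f \cong Q_3$ ``by Remark \ref{gen}(iv)'', but that remark concerns restriction to a smooth \emph{hyperplane section} of $X$, not to a fiber of a fibration. A fiber of $\phi_{\tau}$ is a member of $|\phi_{\tau}^*\O_{X'}({\rm pt})|$, a degree-$2$ threefold, not a member of $|\O_X(1)|$, and Ulrichness is in general destroyed by restriction to fibers: in case (xii) of Theorem \ref{main3} (realized in Example \ref{primobis}) the Ulrich bundle restricts on a fiber $\P^3$ to $T_{\P^3}(-1)^{\oplus 2} \oplus \O_{\P^3}^{\oplus (r-6)}$, which is not Ulrich on $\P^3$ by Remark \ref{kno1}. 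Since your whole argument consists of applying the classification of rank-$2$ Ulrich bundles on $Q_3$ to $\E_{|f}$, and the Ulrichness of $\E_{|f}$ is exactly what is unavailable a priori (it becomes known only a posteriori, once $\E_{|f}$ is identified as a spinor bundle), the argument as proposed is circular and does not close. The ``globalization'' issue you flag as the secondary subtlety is, by contrast, a non-issue: case (xiv) only requires the identification on a general fiber.

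The paper's proof works around precisely this obstacle by studying splitting types on lines instead of cohomology on the fiber. Writing $(\det \E)_{|Q}=eH_{|Q}$ on a general fiber $Q$, it first proves (Subclaim \ref{uni}) that $\E_{|L} \cong \O_{\P^1} \oplus \O_{\P^1}(e)$ for every line $L \subset Q$: through a general point $x$ the lines produced by \cite[Prop.~4.6]{lms} lie in $Q_x$ and are of the form $\Pi_y$ with $\varphi^{-1}(y)$ a positive-dimensional fiber of $\varphi$, so the surjectivity of $H^0(\E) \to H^0(\E_{|L})$ from \cite[Lemma 3.3]{ls} forces $\E_{|L}$ to be non-ample, hence split as above, and an incidence-correspondence argument spreads this to all lines of $Q$. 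Thus $\E_{|Q}$ is uniform of rank $2$; the split case $\E_{|Q} \cong \O_Q \oplus \O_Q(e)$ is then excluded because the section $\P(\O_Q) \subset \P(\E)$ is contracted by $\varphi$, contradicting that $\varphi^{-1}(y)$ is a curve; finally the classification of uniform rank-$2$ bundles on quadrics \cite[Cor.~6.7]{mos} (alternatively \cite[Thm.~2]{psw}, after reducing to $e \le 1$ via the standard sub-line-bundle construction) gives that $\E_{|Q}$ is a spinor bundle. To salvage your outline you would have to first establish the Ulrich (or at least ACM plus correct Hilbert polynomial) property of $\E_{|Q}$ by some independent argument, which is not easier than the paper's direct route.
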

\begin{proof} 
Let $Q$ be a general fiber of $\phi_{\tau}$. Then $(\det \E)_{|Q}=eH_{|Q}$ for some $e \ge 0$. 
\begin{subclaim} 
\label{uni}
For any line $L \subset Q$ we have that 
\begin{equation}
\label{spl}
\E_{|L} \cong \O_{\P^1} \oplus \O_{\P^1}(e).
\end{equation}
\end{subclaim}
\begin{proof} 
We first prove that \eqref{spl} holds for a general line $L \subset Q$.

There is a nonempty open subset $U \subseteq X$ such that $F_1(X,x)$ is smooth, $\dim F_1(X,x)=1$ and $Q_x:=\phi_{\tau}^{-1}(\phi_{\tau}(x))$ is smooth irreducible for any $x \in U$. By \cite[Prop.~4.6]{lms} there is a $1$-dimensional family of lines $Z_x:=\psi(\P^1) \subset F_1(X,x)$, hence $Z_x$ is an irreducible component of $F_1(X,x)$, disjoint from other components. For every $[L] \in Z_x$ we have that $\dim_{[L]} F_1(X,x) = \dim_{[L]} Z_x = 1$, hence $K_X \cdot L = -3$. Therefore $(K_X+3H) \cdot L =0$ and then $L \subset Q_x$. Hence $Z_x=F_1(Q_x,x)$ for every $x \in U$. 

Moreover, let us see that for every $[L] \in Z_x$ property \eqref{spl} holds. In fact, if $[L] \in Z_x$ then we have that $L=\psi(y)=\Pi_y$ for some $y \in P_x$. Since $\varphi^{-1}(y)$ is a curve contracted by $\varphi$ and $x \in L=\Pi_y$, we deduce that there is a $0$-dimensional subscheme $Z$ of $L$ of length $2$ such that $H^0(\E) \to H^0(\E_{|Z})$ is not surjective. On the other hand, the restriction map $H^0(\E) \to H^0(\E_{|L})$ is surjective by \cite[Lemma 3.3]{ls}, hence $H^0(\E_{|L}) \to H^0(\E_{|Z})$ is not surjective. Therefore $\E_{|L}$ is not ample and \eqref{spl} holds. 

Consider now the incidence correspondence
$$\I = \{ (u, [L]) \in Q \times F_1(Q) : u \in L \}$$
together with its surjective projections 
$$\xymatrix{& \I \ar[dl]_{p_1} \ar[dr]^{p_2} & \\ \ \ \ \ Q & & \ F_1(Q) \ \ \ .}$$
Note that $\I$ is irreducible. Since $p_1^{-1}(U \cap Q)$ dominates $F_1(Q)$, there is a nonempty open subset $V$ of $F_1(Q)$ such that $V \subset p_2(p_1^{-1}(U \cap Q))$. Now for any line $[L] \in V$ we have that $L = p_2(x,L)$ for some $(x,L) \in p_1^{-1}(U \cap Q)$, so that $x \in L \subset Q$ and $x \in U \cap Q$. Therefore $[L] \in F_1(Q,x)=F_1(Q_x,x)=Z_x$. Hence \eqref{spl} holds for a general line $L \subset Q$. Now for any $[L] \in F_1(Q)$ we have that $\E_{|L} \cong \O_{\P^1}(a_1) \oplus \O_{\P^1}(a_2)$ with $a_1 \ge a_2 \ge 0$, since $\E$ is globally generated. By semicontinuity we have that $h^0(\E_{|L}(-e)) \ge 1$ since \eqref{spl} holds for a general line $L_t \subset Q$. Therefore we get that $a_1 \ge e$. Now $a_1+a_2=c_1(\E) \cdot L=c_1(\E) \cdot L_t=e$, so that $a_2 = e-a_1 \le 0$ and therefore $a_2=0$ and $a_1=e$.
\end{proof} 
We now continue with the proof of Claim \ref{a4}.
 
Note that $\E_{|Q}$ cannot split as $\O_Q \oplus \O_Q(e)$. In fact, if such a splitting holds, choosing a point $x \in U \cap Q$ (see proof of Subclaim \ref{uni}) we get that $x \in L=\Pi_y$, hence there is $z \in \varphi^{-1}(y)$ such that $x = \pi(z)$. On the other hand, $z \in \varphi^{-1}(y) = \P(\O_L) \subset \P(\O_Q) \subset \P(\E)$. But $\P(\O_Q)$ is contracted to a point by $\varphi$, contradicting the fact that $\varphi^{-1}(y)$ is a curve.

Now Subclaim \ref{uni} gives that $\E_{|Q}$ is a uniform rank $2$ vector bundle on $Q$ satisfying \eqref{spl}. Therefore $\E_{|Q}$ is indecomposable and \cite[Cor.~6.7]{mos} gives that $\E_{|Q}$ is a spinor bundle, that is we are in case (xiv).
%
%
This proves Claim \ref{a4}.
\end{proof} 

This concludes the proof of Theorem \ref{main3} in Case (A).

\noindent {\bf Case (B)}: $c_1(\E)^4 = 0$.

Note that this implies that $\rho(X) \ge 2$. Recall that $(X,\O_X(1))$ belongs to one of the cases (a)-(e) in Lemma \ref{aggiu} and we are assuming that $(X,\O_X(1))$ is neither as in (a) nor as in (b.1). In many cases we will apply the notation and results of the Dichotomy Lemma with respect to the nef value morphism $\phi_{\tau}$ in Lemma \ref{aggiu}.

\begin{claim} 
In case (b.2), either we are in case (fin) and $(X,\O_X(1),\E)$ is as in (ii1) or we are in case (fact) and $(X,\O_X(1),\E)$ is as in (ii2). 
\end{claim}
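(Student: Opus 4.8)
The plan is to analyze case (b.2), where $(X,\O_X(1))$ is a linear $\P^3$-bundle under $\phi_\tau : X \to B$ over a smooth curve, by applying the Dichotomy Lemma \ref{dico} to the nef value morphism $h = \phi_\tau = p$. Since $c_1(\E)^4 = 0$, the fibers $F_x$ are linear spaces $\P^k$ with $k \ge 1$ for every $x \in X$, and the dichotomy forces exactly one of two mutually exclusive situations: case (fin), where $\dim F_x \cap f_x = 0$ for all $x$, or case (fact), where $F_x \subseteq f_x$, the morphism $p$ factors through $\widetilde\Phi$, and each fiber $f_x$ is a disjoint union of linear spaces that are fibers of $\Phi$. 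My first step is simply to split into these two cases and handle each.

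In case (fin), I would invoke Lemma \ref{fe}. Since $\dim B = 1$, part (i) of that lemma gives that $F_x$ is a line for every $x \in X$ and $B \cong \P^1$. Since $p$ is a linear $\P^3$-bundle over a curve, part (ii) then upgrades this to case (emb): scheme-theoretically $F_x \cap f_x = \{x\}$. With $F_x$ a line for every $x$, the fibers of $\widetilde\Phi$ are equidimensional, so Lemma \ref{ii} applies (using $(X,\O_X(1)) \ne (\P^n,\O_{\P^n}(1))$, which holds because we have excluded case (a)) and exhibits $(X,\O_X(1),\E)$ as a linear Ulrich triple with $p = \widetilde\Phi$, $B = \widetilde{\Phi(X)}$ and $b = n - \phi(\E) = 4 - 3 = 1$. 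The (emb) property means $p_{|f_x}$ embeds each fiber $f_x \cong \P^3$ into $B$; but here $B$ is a curve while $f_x$ is three-dimensional, so this route needs care. The resolution is that in the (fin) situation $F_x$ is a line transverse to the $\P^3$-fibers of $p$, so the roles are reversed: $\widetilde\Phi$ contracts the lines $F_x$ and has $\P^3$-fibers, meaning $b = 1$ with base a curve, and $F_x$ meeting each $\P^3$ in a point forces the $\P^3$-bundle to be a product. I would then identify $(X,\O_X(1))$ with $\P^1 \times \P^3$ under the Segre embedding, with $\Phi = p$ the second projection, and conclude $\E \cong p^*(\O_{\P^3}(1))^{\oplus r}$ by the structure of Ulrich triples on $\P^1 \times \P^3$, landing in case (ii1).

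In case (fact), the morphism $p = \phi_\tau$ factors through $\widetilde\Phi$ and every fiber $f_x \cong \P^3$ is a disjoint union of linear fibers of $\Phi$. Since $f_x \cong \P^3$ is itself irreducible and connected, the disjoint union has a single component, forcing $F_x = f_x$ for every $x$. In particular $\widetilde F_x = f_x$, so $\widetilde\Phi$ and $p$ have the same fibers; combined with $p_*\O_X \cong \O_{X'}$ (recalled for $\phi_\tau$ in \S\ref{adj}) and the fact that $f_x \cong \P^3$ is integral with $\Pic(f_x) \cong \Z$, Lemma \ref{fe}(iii) gives $p = \phi_\tau = \widetilde\Phi$. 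The fibers are three-dimensional, so again Lemma \ref{ii} presents $(X,\O_X(1),\E)$ as a linear Ulrich triple, now with $b = 1$ and $p = \phi_\tau = \widetilde\Phi$, which is exactly the content of case (ii2).

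The main obstacle I anticipate is the bookkeeping in case (fin): reconciling the (emb) conclusion, which nominally embeds the three-dimensional fiber $f_x$ into the one-dimensional base $B$, with the actual geometry. The correct reading is that $\widetilde\Phi$ contracts the transverse lines $F_x$ rather than the $\P^3$'s, so $\widetilde\Phi$ is the $\P^3$-bundle projection and the (emb) transversality says each line meets each $\P^3$-fiber in one reduced point. Establishing that this transversality together with $B \cong \P^1$ splits the bundle as a genuine product $\P^1 \times \P^3$, and that the Segre polarization is the right one, is the delicate part; once the product structure is fixed, identifying $\E$ with $p^*(\O_{\P^3}(1))^{\oplus r}$ via the vanishing \eqref{van-trip} defining a linear Ulrich triple with $b=1$ is routine.
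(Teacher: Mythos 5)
Your treatment of case (fact) is correct and coincides with the paper's: the connectedness of $f_x \cong \P^3$ (or, equivalently, Lemma \ref{fe}(iii)) forces $F_x = f_x$, hence $\phi_{\tau} = \widetilde \Phi$, and Lemma \ref{ii} exhibits $(X,\O_X(1),\E)$ as a linear Ulrich triple with $b=1$, which is case (ii2).

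Case (fin), however, has a genuine gap, compounded by errors in the bookkeeping. The opening is right and matches the paper: Lemma \ref{fe}(i) gives that the $F_x$ are lines and $X' \cong \P^1$, and Lemma \ref{fe}(ii) gives (emb). But from there on you systematically swap the two morphisms. Since the fibers $F_x$ of $\widetilde\Phi$ are lines, $\phi(\E)=1$ and Lemma \ref{ii} yields a linear Ulrich triple with $b = n - \phi(\E) = 3$: here $\widetilde\Phi$ is a $\P^1$-bundle over the \emph{threefold} $\widetilde{\Phi(X)}$, whereas it is $\phi_{\tau}$, not $\widetilde\Phi$, that has the $\P^3$'s as fibers. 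So your statements ``$b = 4-3=1$'', ``$\widetilde\Phi$ \dots has $\P^3$-fibers'' and ``$\widetilde\Phi$ is the $\P^3$-bundle projection'' are all incorrect. Consequently your proposed mechanism for identifying $\E$ --- the vanishing \eqref{van-trip} ``defining a linear Ulrich triple with $b=1$'' --- cannot produce (ii1): a triple with $b=1$ has $\E$ pulled back from the curve, hence trivial on every $\P^3$-fiber, while in (ii1) the bundle $\E$ is pulled back from the $\P^3$ factor. Finally, the step you defer as ``the delicate part'' is precisely the missing proof: one must show that (emb) makes $\widetilde\Phi_{|f_x} : f_x \cong \P^3 \to \widetilde{\Phi(X)}$ a closed embedding, hence $\widetilde{\Phi(X)} \cong \P^3$; that (emb) makes each line $F_x$ a section of $\phi_{\tau}$, so that $(\phi_{\tau}, \widetilde\Phi) : X \to \P^1 \times \P^3$ is an isomorphism carrying $\O_X(1)$ to the Segre polarization; and then, writing $\E \cong \widetilde\Phi^*\H$ (Lemma \ref{not2}), that the K\"unneth formula makes $\H(-1)$ Ulrich on $\P^3$, so that $\H \cong \O_{\P^3}(1)^{\oplus r}$ by Remark \ref{kno1}. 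The paper disposes of all of this by invoking the proof of \cite[Cor.~5, Case 2]{ls}; your write-up neither cites such a result nor supplies the argument, and as written the route it sketches would not reach (ii1).
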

\begin{proof}
In case (fact) just apply Lemmas \ref{fe}(iii) and \ref{ii} to get case (ii2). In case (fin) we are in (emb) by Lemma \ref{fe}(ii). Now the same proof of \cite[Cor.~5, Case 2]{ls} applies and we get that $(X,\O_X(1),\E)$ is as in (ii1).
\end{proof}

\begin{claim}  
In case (c.1) we have that $(X,\O_X(1),\E)$  is as in (iii).
\end{claim}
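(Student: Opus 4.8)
=== PROOF PROPOSAL ===

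\textbf{Setup and strategy.} The claim to prove is that in case (c.1) of Lemma~\ref{aggiu} — that is, when $(X,\O_X(1))$ is a Del Pezzo fourfold with $K_X=-3H$ — a non-big Ulrich bundle $\E$ with $c_1(\E)^4=0$ forces $(X,\O_X(1),\E)$ to be $(\P^2\times\P^2,\O_{\P^2}(1)\boxtimes\O_{\P^2}(1),p^*(\O_{\P^2}(2))^{\oplus r})$ as in (iii). The plan is to exploit the classification of Del Pezzo fourfolds together with the structural constraints that a non-big Ulrich bundle imposes via the fibers $F_x$ of $\Phi$, which are linear spaces by Lemma~\ref{not2}. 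Since $c_1(\E)^4=0$ we have $\rho(X)\ge 2$, so among the standard list of Del Pezzo $4$-folds (the cubic in $\P^5$, the intersection of two quadrics in $\P^6$, the linear section of $\mathbb G(1,4)$, and $\P^2\times\P^2$, together with any higher Picard number degenerations) only those with $\rho(X)\ge 2$ can occur. First I would invoke the classification (as cited already in the excerpt, e.g.\ \cite{fu1} and \cite[\S1]{lp}) to reduce to the case $(X,\O_X(1))=(\P^2\times\P^2,\O_{\P^2}(1)\boxtimes\O_{\P^2}(1))$, since this is essentially the only Del Pezzo fourfold of Picard rank $\ge 2$.

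\textbf{Identifying $X$.} The key reduction is that $K_X=-3H$ together with $\rho(X)\ge 2$ pins down $X$. One clean route is to note that on a Del Pezzo fourfold the contractions of extremal rays are controlled, and $\rho\ge 2$ forces $X$ to be a product or a projective bundle structure; the classical result is that $\P^2\times\P^2$ is the unique Del Pezzo fourfold with $\rho=2$ in this family. I would cite the relevant structure theorem directly rather than reprove it. Once $X\cong\P^2\times\P^2$ with its Segre polarization, the two projections $p_1,p_2$ give the two $\P^2$-bundle structures, and every line in $X$ lies in a fiber of exactly one projection.

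\textbf{Determining $\E$.} With $X$ fixed, I would analyze $\Phi$ via the Dichotomy Lemma~\ref{dico}, applied to (say) $p_1:\P^2\times\P^2\to\P^2$. Case (fact) for $p_1$ would give, by Lemma~\ref{fe}(iii) applied to the integral fibers $f_x\cong\P^2$ with $\Pic\cong\Z$, that $p_1=\widetilde\Phi$ and hence $F_x=f_x=\P^2$. Then Lemma~\ref{ii} makes $(X,\O_X(1),\E)$ a linear Ulrich triple with base $B=\P^2$, bundle $\F$ of rank $3$ with $(\P(\F),\O_{\P(\F)}(1))=(\P^2\times\P^2,\O(1,1))$, forcing $\F\cong\O_{\P^2}(1)^{\oplus 3}$ and $\det\F=\O_{\P^2}(3)$; the vanishing condition \eqref{van-trip} then identifies $\G$ as an Ulrich bundle on $(\P^2,\O_{\P^2}(1))$ twisted appropriately, which by Remark~\ref{kno1} forces $\E\cong p^*(\O_{\P^2}(2))^{\oplus r}$, exactly case (iii). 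The remaining task is to rule out case (fin) for \emph{both} projections simultaneously: if (fin) held for $p_1$, then $F_x$ meets each $p_1$-fiber in dimension $0$, so $F_x$ must be a line mapping finitely to $\P^2$, but a line in $\P^2\times\P^2$ lies in a fiber of one of the two projections, whence $F_x$ is contained in a $p_2$-fiber and (fact) holds for $p_2$; so at least one projection is in case (fact), and the argument above applies.

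\textbf{Main obstacle.} The hard part will be the clean disposal of case (fin) and the verification that the fiber geometry genuinely forces $F_x$ to coincide with a projection fiber rather than being a proper linear subspace (a line) transverse to both rulings. I expect to lean on the fact that $F_x$ is a positive-dimensional linear space (since $c_1(\E)^4=0$ gives $\dim F_x\ge 1$) combined with the observation that any linear space in $\P^2\times\P^2$ of dimension $\ge 2$ must be a fiber of a projection, while a line is always contained in some fiber; this combinatorial rigidity of linear spaces on the Segre fourfold is what ultimately drives the whole argument and is where care is needed.
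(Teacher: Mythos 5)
Your proposal is correct, and it reduces to $X\cong\P^2\times\P^2$ exactly as the paper does (Picard rank $\ge 2$ plus the classification of Del Pezzo fourfolds). Where you diverge is in determining $\E$: the paper at this point simply cites \cite[Thm.~3]{lms}, which classifies non-big Ulrich bundles on $\P^2\times\P^2$ outright, whereas you give a self-contained argument inside the paper's own toolkit — run the Dichotomy Lemma \ref{dico} on a projection $p_i$, use the rigidity of linear spaces on the Segre fourfold (a plane is a fiber of a projection, a line lies in a fiber) to show that case (fact) must hold for at least one projection, then Lemma \ref{fe}(iii) gives $p_i=\widetilde\Phi$ with $F_x=f_x$, Lemma \ref{ii} produces the linear Ulrich triple with $B=\P^2$, $\F\cong\O_{\P^2}(1)^{\oplus 3}$, and the vanishings \eqref{van-trip} say $\G(1)$ is Ulrich on $(\P^2,\O_{\P^2}(1))$, so Remark \ref{kno1} forces $\E\cong p^*(\O_{\P^2}(2))^{\oplus r}$. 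Your route buys independence from the companion paper (at the cost of a page of argument), while the citation buys brevity and also covers the complementary fact, used elsewhere in the proof (Claim \ref{a1-3}, case (A.4)), that no non-big Ulrich bundle with $c_1(\E)^4>0$ exists on $\P^2\times\P^2$ — your argument only treats $c_1(\E)^4=0$, which is all that is needed here. One small imprecision: in the (fin) case for $p_1$ you assert $F_x$ "must be a line," but $F_x$ could also be a plane coinciding with a $p_2$-fiber; this does not affect the conclusion, since in both cases $F_x$ is contained in a $p_2$-fiber and hence (fact) holds for $p_2$, as your closing paragraph in effect acknowledges.
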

\begin{proof}
Since $\rho(X) \ge 2$, using the classification of del Pezzo $4$-folds (see for example \cite[\S 1]{lp}, \cite{fu1}), we see that $X=\P^2 \times \P^2 \subset \P^8$. Then we are in case (iii) by \cite[Thm.~3]{lms}. 
\end{proof}

\begin{claim} 
In case (c.2), $(X,\O_X(1),\E)$ is as in (iv). 
\end{claim}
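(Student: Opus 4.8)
The plan is to reduce immediately to Proposition \ref{qf}. By Lemma \ref{aggiu}, in case (c.2) the nef value morphism $\phi_{\tau} : X \to X'$ is a quadric fibration over a smooth curve, and we are working throughout Case (B), so that $c_1(\E)^4=0$. Since $X$ is a fourfold, $n=4 \ge 4$, and hence all the hypotheses of Proposition \ref{qf} hold for the choice $h=\phi_{\tau}$. I would therefore simply invoke that proposition rather than re-run any of the Dichotomy Lemma machinery by hand.

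Applying Proposition \ref{qf} with $h=\phi_{\tau}$ yields at once
$$(X,\O_X(1),\E)=(\P^1 \times Q, \O_{\P^1}(1) \boxtimes \O_Q(1), q^*(\H(1))),$$
where $q : \P^1 \times Q \to Q=Q_3$ is the second projection and $\H$ is a direct sum of spinor bundles on $Q$. This already pins down the pair $(X,\O_X(1))$ exactly as in (iv), and the only remaining task is to identify $\H$.

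The last step is to match the bundle data on $Q_3$. Since $3$ is odd, Definition \ref{spin} tells us there is a single spinor bundle $\mathcal S=\mathcal S_3$ on $Q_3$, of rank $2$. Hence any direct sum of spinor bundles on $Q$ is of the form $\mathcal S^{\oplus m}$, and comparing ranks gives $r=\rk \H=2m$, so $m=\frac{r}{2}$ and $\E \cong q^*(\mathcal S(1))^{\oplus (\frac{r}{2})}$, which is precisely case (iv). I do not expect any genuine obstacle here: all the substantive work—running the Dichotomy Lemma on the quadric fibration, ruling out case (fact) via Lemma \ref{fe}(iii), establishing (emb) through Lemma \ref{fe}(i)--(ii), and the adjunction computation forcing $\det \F = 2H_Q$ and hence $\F \cong \O_Q(1)^{\oplus 2}$—is already carried out inside Proposition \ref{qf}. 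The present claim is thus essentially a specialization of that result to $n=4$, combined with the uniqueness of the spinor bundle on an odd-dimensional quadric.
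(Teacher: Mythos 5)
Your proposal is correct and is exactly the paper's argument: the paper's own proof of this claim is the one-line "Just apply Proposition \ref{qf}," relying on the fact that in case (c.2) the morphism $\phi_{\tau}$ is a quadric fibration over a smooth curve and $c_1(\E)^4=0$. Your additional remarks verifying the hypotheses and identifying $\H \cong \mathcal S^{\oplus (r/2)}$ via the uniqueness of the spinor bundle on $Q_3$ simply make explicit what the paper leaves implicit.
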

\begin{proof}
Just apply Proposition \ref{qf}.
\end{proof}

\begin{claim} 
\label{serve}
In case (c.3), either we are in case (fact) and $(X,\O_X(1),\E)$ is as in (v2) with $p = \phi_{\tau} = \widetilde \Phi$ and $b=2$ or we are in case (emb) and either $(X,\O_X(1),\E)$ is as in (v3) with $p = \widetilde \Phi, b=3$, or $(X,\O_X(1),\E)$ is as in (v1).
\end{claim}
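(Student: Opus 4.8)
The plan is to exploit the fact that in case (c.3) the nef value morphism $p := \phi_{\tau} : X \to B$ is a linear $\P^2$-bundle over a smooth surface $B$, and to run the Dichotomy Lemma \ref{dico} on $h = p$. First I would observe that case (fact) and case (emb) are the only two possibilities here: by Lemma \ref{fe}(ii), if case (fin) holds for a linear $\P^k$-bundle then in fact case (emb) holds, so the trichotomy collapses to (fact) versus (emb). This justifies stating the claim as a dichotomy between these two subcases.

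In case (fact), the fibers $f_x \cong \P^2$ are integral with $\Pic(f_x) \cong \Z$, and since $(\phi_\tau)_*\O_X \cong \O_{X'}$ we have $p_*\O_X \cong \O_B$, so Lemma \ref{fe}(iii) applies to give $p = \widetilde\Phi$. Then $\widetilde\Phi$ has equidimensional fibers (all $\cong \P^2$), so Lemma \ref{ii} gives that $(X,\O_X(1),\E)$ is a linear Ulrich triple with $p = \phi_\tau = \widetilde\Phi$, $B = \widetilde{\Phi(X)}$ and $b = n - \phi(\E) = 4 - 2 = 2$. This is exactly case (v2) with $b=2$, so this subcase is quickly settled.

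In case (emb), the fibers $F_x$ are lines meeting each $\P^2$-fiber $f_x$ of $p$ transversally in the single reduced point $x$. Here $\widetilde\Phi$ again has equidimensional fibers (the $F_x$ are all lines), so Lemma \ref{ii} makes $(X,\O_X(1),\E)$ a linear Ulrich triple with $p' := \widetilde\Phi : \P(\F) \to \widetilde{\Phi(X)}$ a linear $\P^1$-bundle, hence $b=3$. The task is then to identify $\widetilde{\Phi(X)}$ and the very ample rank-$2$ bundle $\F$ on it. The embedding condition (emb) forces $p_{|F_x}$ to be a closed immersion of the line $F_x$ into $B$, and dually the restriction of $p' = \widetilde\Phi$ to each $\P^2$-fiber $f_x$ of $\phi_\tau$ is a closed embedding of $\P^2$ into the threefold $\widetilde{\Phi(X)}$. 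I would then use adjunction on a fiber: writing $\det\F = \L$ and comparing $K_{\P^2} = (K_X + f)_{|f}$ with the restriction of the tautological and pullback classes, I would pin down the relevant numerical invariant (the analogue of the constant $c$ computed in Propositions \ref{qf} and \ref{caso(e)}), which should separate the two geometric possibilities: either $\widetilde{\Phi(X)}$ carries two compatible projective-bundle structures forcing $(X,\O_X(1),\E)$ into case (v2) with $b=3$, or the second structure degenerates and one recognizes $X$ as a hyperplane section of $\P^2 \times \P^3$ in the Segre embedding, giving case (v1) with $\E \cong q^*(\O_{\P^3}(2))^{\oplus r}$.

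The main obstacle I anticipate is the analysis inside case (emb): proving that the two subcases (v2 with $b=3$) and (v1) genuinely exhaust the possibilities, and in the (v1) case correctly reconstructing $X$ as a hyperplane section of $\P^2 \times \P^3$ together with the precise description of $\E$ as the pullback $q^*(\O_{\P^3}(2))^{\oplus r}$. This requires controlling how the two fibration structures on the threefold $B = \widetilde{\Phi(X)}$ interact, most likely via a structure theorem for $\P^2$-bundles or for threefolds with two projective-bundle structures (in the spirit of the Sato and Fujita-type results already cited in the Case (A) arguments), and then transporting that structure back up along the $\P^1$-bundle $\widetilde\Phi$ to recover $(X,\O_X(1))$ and $\E$.
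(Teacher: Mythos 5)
Your treatment of case (fact) and your observation that case (fin) collapses into case (emb) via Lemma \ref{fe}(ii) are both correct and agree with the paper. The genuine gap is in case (emb), where you assert that ``the $F_x$ are all lines,'' hence that $\widetilde\Phi$ has equidimensional fibers, and then apply Lemma \ref{ii} to conclude that $(X,\O_X(1),\E)$ is a linear Ulrich triple with $b=3$ throughout case (emb). Neither assertion is justified, and the second is actually false in one of the two target cases: in case (v1), where $X$ is a hyperplane section of $\P^2\times\P^3$ and $\E \cong q^*(\O_{\P^3}(2))^{\oplus r}$, the map $q=\widetilde\Phi$ has general fiber a line but necessarily has at least one fiber isomorphic to $\P^2$ (the defining bilinear form induces a linear map $\C^4\to(\C^3)^*$, which always has nontrivial kernel), so $\widetilde\Phi$ is \emph{not} equidimensional and $(X,\O_X(1),\E)$ is \emph{not} a linear Ulrich triple. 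The true dividing line between (v2) with $b=3$ and (v1) is precisely whether $\Phi$ has a jumping fiber $F_{x_0}\cong\P^2$; your plan, which forces both subcases into a single $\P^1$-bundle structure and hopes an adjunction computation on the base threefold will separate them, cannot recover (v1).

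Concretely, the paper's argument in case (emb) has three steps that your sketch is missing. First, one must show that the \emph{general} fiber $F_x$ is a line: if it were a plane, then (emb) would make $\phi_\tau$ and $\widetilde\Phi$ two distinct linear $\P^2$-bundle structures on $X$ over bases isomorphic to $\P^2$, and \cite[Thm.~A]{sa1} would force $(X,\O_X(1))\cong(\P^2\times\P^2,\O_{\P^2}(1)\boxtimes\O_{\P^2}(1))$, contradicting $K_X\ne -3H$ in case (c.3); a fiber $\P^3$ is excluded even more directly. Second, in the subcase where every fiber is a line, Lemma \ref{ii} does give a linear Ulrich triple with $b=3$, but case (v2) of Theorem \ref{main3} additionally asserts that $X$ fibers over a smooth curve with all fibers $\P^1\times\P^2$ Segre-embedded; proving this requires the paper's analysis of the family of rational curves $\{\phi_\tau(F_u)\}$ (a bend-and-break argument to show the induced maps $\gamma_z$ on the $\P^2$-fibers are constant, then \cite[Thm.~A]{sa1} applied to each $Y_u=\phi_\tau^{-1}(\phi_\tau(F_u))$). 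Third, in the subcase where a plane fiber $F_{x_0}\cong\P^2$ exists, one gets $X'\cong\P^2$, and a Chern-class computation on $X\cong\P(\F)\to\P^2$ (using $c_1(\E)^4=0$, $\det\E\cdot F_x=0$ and $K_X\cdot F_x=-2$) pins down $\F\cong T_{\P^2}\oplus\O_{\P^2}(1)$; then $\det\E=a(\xi-R)$, the morphism $q:X\to\P^3$ defined by $|\xi-R|$ together with \cite[Lemma 5.1]{lo} gives $\E\cong q^*\G$, and a K\"unneth argument shows $\G(-2)$ is Ulrich on $\P^3$, hence $\G\cong\O_{\P^3}(2)^{\oplus r}$ by Remark \ref{kno1}. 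None of these steps is routine, and the second and third are exactly where the content of the claim lies.
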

\begin{proof}
Set $\phi = \phi_{\tau} : X \to X'$. In case (fact) just apply Lemmas \ref{fe}(iii) and \ref{ii} to get $(X,\O_X(1),\E)$ is as in (v2) with $p = \phi_{\tau} = \widetilde \Phi$ and $b=2$. In case (fin) we are in (emb) by Lemma \ref{fe}(ii). Note that it cannot be that there is a fiber $F_{x_0}=\P^3$, for then $\phi_{|F_{x_0}} : F_{x_0}=\P^3 \to X'$ is constant, hence $F_{x_0} \subset f_{x_0}$, a contradiction. It follows that if $F_x = \P^2$ then $F_u=\P^2$ for every $u \in X$, hence $\widetilde \Phi$ has equidimensional fibers and gives a linear $\P^2$-bundle over a smooth $\widetilde{\Phi(X)}$. On the other hand, $\phi_{|F_x} : F_x \to X'$ is a closed embedding, giving that $X' \cong \P^2$. Also $\widetilde \Phi_{|f_x} : f_x \to \widetilde{\Phi(X)}$ is a closed embedding, giving that $\widetilde{\Phi(X)} \cong \P^2$. Thus $X$ has two different $\P^2$-bundle structures over $\P^2$ and \cite[Thm.~A]{sa1} implies that $(X, \O_X(1)) \cong (\P^2 \times \P^2, \O_{\P^2}(1) \boxtimes \O_{\P^2}(1)$. But then $K_X=-3H$, a contradiction since we are in case (c.3). 

Therefore $F_x$ is a line and $c_1(\E)^3 \ne 0$.

Now we have two possibilities: either there is a fiber $F_{x_0}=\P^2$ or $F_u$ is a line for every $u \in X$. 

Consider the second case. We have that $\widetilde \Phi$ has equidimensional fibers and then $(X,\O_X(1),\E)$ is a linear Ulrich triple with $p = \widetilde \Phi$ and $b=3$ by Lemma \ref{ii}. Since we are in case (emb), we also have an unsplit family of smooth rational curves $R = \{\phi(F_u), u \in X\}$ covering $X'$. For each $r \in R$ let $C_r \subset X'$ be the corresponding rational curve. For every $z \in X'$ we have a morphism 
$$\gamma_z : \P^2 \cong \phi^{-1}(z) \to R$$ 
defined by $\gamma_z(u)=\phi(F_u)$. Hence $\gamma_z$ is either finite onto its image or constant. We claim that the first case does not occur. In fact, consider the incidence correspondence
$$\I=\{(z, r) \in X' \times R : z \in C_r\}$$
together with its two projections $\pi_1 : \I \to X'$ and $\pi_2 : \I \to R$. Note that $\pi_1^{-1}(z) \cong \Im \gamma_{z}$ has dimension $0$ or $2$ for every $z \in X'$. Now $\pi_2$ is surjective and $\pi_2^{-1}(r) \cong C_r$ for every $r \in R$, so that $\I$ is irreducible and $\dim \I = \dim R + 1$. Also $\pi_1$ is surjective and therefore $\dim R = 1 + \dim \pi_1^{-1}(z_1)$ for $z_1 \in X'$ general. If $\dim \pi_1^{-1}(z_1) = 2$, we get that $\dim R = 3$, contradicting the bend-and-break lemma (see for example \cite[Prop.~3.2]{de}). Therefore $\dim \pi_1^{-1}(z_1)=0$ and we get that $\dim R = 1$. Now for any $z \in X'$ we have that $\dim \pi_1^{-1}(z) = \dim \Im \gamma_z \le 1$, so that $\dim \pi_1^{-1}(z)=0$ and $\gamma_{z}$ is constant. 

For every $u \in X$ let $Y_u = \phi^{-1}(\phi(F_u))$. Then $\phi_{|Y_u} : Y_u \to \P^1 \cong \phi(F_u)$ exhibits $Y_u$ as a linear $\P^2$-bundle over $\P^1$ with fibers $f_{u'}, u' \in Y_u$. Now observe that, for every $u' \in Y_u$ we have that $F_{u'} \subset Y_u$: in fact, since $u' \in Y_u$ we have that $z':=\phi(u') \in \phi(F_u)$, so that there is a $u'' \in F_u$ such that $z'=\phi(u'')$. Hence $u', u'' \in \phi^{-1}(z')$ and therefore $\gamma_{z'}(u')=\gamma_{z'}(u'')$, that is $\phi(F_{u'})=\phi(F_{u''})=\phi(F_u)$ since $F_{u''}=F_u$. Now for any $u_1 \in F_{u'}$ we have that $\phi(u_1) \in \phi(F_{u'})=\phi(F_u)$ and therefore $u_1 \in Y_u$. Thus $F_{u'} \subset Y_u$ for every $u' \in Y_u$. 
Set  $h_u := \widetilde \Phi_{|Y_u} : Y_u \to \widetilde{\Phi(X)}$. Then $F_{u'} = h_u^{-1}(h_u(u'))$ for every $u' \in Y_u$. This gives that $h_u(Y_u)$ has dimension $2$. On the other hand for any $u' \in Y_u$ we have that ${h_u}_{|f_{u'}}$ is a closed embedding, and therefore $h_u(Y_u) \cong \P^2$ and $h_u$ exhibits $Y_u$ as a linear $\P^1$-bundle over $\P^2$ with fibers $F_{u'}, u' \in Y_u$.
Finally observe that $Y_u$ is smooth, since $X \cong \P(\F)$ and $Y_u \cong \P(\F_{|\phi(F_u)})$. It follows by \cite[Thm.~A]{sa1} that, for every $u \in X$, $Y_u \cong \P^1 \times \P^2$ embedded by the Segre embedding in $X \subset \P^N$. Moreover since $\gamma_{z}$ is constant for every $z \in X'$ it follows that there is a unique $r_z \in R$ such that $z \in C_{r_z}$. This defines a morphism $X' \to R$ with all fibers the curves $C_r$. Passing to the Stein factorization we get a $\P^1$-bundle $X' \to \widetilde R$ onto a smooth curve. Finally, the fibers of the composition $X \to X' \to  \widetilde R$ are exactly the $Y_u$, that is $\P^1 \times \P^2$ embedded by the Segre embedding. This concludes the proof in the case that $F_u$ is a line for every $u \in X$, and gives that $(X,\O_X(1),\E)$ is as in (v3) with $p = \widetilde \Phi$ and $b=3$.

Assume now that there is a fiber $F_{x_0}=\P^2$, so that, as above $X' \cong \P^2$. Recall that we have a very ample rank $3$ vector bundle $\F$ over $\P^2$ such that $(X, \O_X(1)) \cong (\P(\F),\O_{\P(\F)}(1))$. As is well known, $F_1(X,x)$ is smooth, hence $L:=F_x$ belongs to a unique irreducible component $W$ of $F_1(X,x)$. For any line $[L'] \in W$ we have that $\det \E \cdot L' = \det \E \cdot L = 0$, hence $L'$ is contracted by $\Phi$, that is $L'=L$ and therefore $\dim W=0$. Hence $0 = \dim_{[L]} F_1(X,x) = - K_X \cdot L -2$, so that $K_X \cdot L = -2$. 

Set $c_i(\F)=a_iH_{\P^2}^i$ for some $a_i \in \Z, i=1,2$. Let $\xi = \O_{\P(\F)}(1)$ and $R = \phi^*(\O_{\P^2}(1))$, so that $\xi^2 \cdot R^2 = 1$ and $R^3=0$. We claim that 
$$\xi^4 = a_1^2-a_2 \ \hbox{and} \ \xi^3 \cdot R = a_1.$$ 
In fact, from the standard relation
$$\sum\limits_{i=0}^3 (-1)^i \phi^*(c_i(\F)) \xi^{3-i}=0$$
we get
$$\xi^3 = a_1 \xi^2 \cdot R - a_2 \xi \cdot R^2$$
hence $\xi^3 \cdot R = a_1$ and $\xi^4 = a_1 \xi^3 \cdot R - a_2 \xi^2 \cdot R^2 = a_1^2-a_2$. 

Now there are $a, b \in Z$ such that 
$$\det \E = a \xi + b R.$$
Note that it cannot be that $a=0$, for otherwise we would get that $c_1(\E)^3 = 0$, a contradiction. Also, the condition $c_1(\E)^4 = 0$ gives
\begin{equation}
\label{sec}
6a^2b^2 + 4a^3ba_1 + a^4(a_1^2-a_2)=0.
\end{equation}
Now $\xi \cdot L = 1$ and $\det \E \cdot L = 0$, giving
\begin{equation}
\label{sec2}
a + b R \cdot L = 0.
\end{equation}
Therefore $K_X \cdot L = -2$ gives
$$(-3\xi + \phi^*(K_{\P^2}+ \det \F)) \cdot L = -2$$
that is
$$(a_1-3) R \cdot L = 1$$
and therefore
$$a_1=4 \ \hbox{and} \ R \cdot L = 1.$$
We get from \eqref{sec2} that $b=-a$ and from \eqref{sec} that $a_2=6$. 

Let $M$ be any line in $\P^2$. Then $\F_{|M}$ is ample and $c_1(\F_{|M}) = c_1(\F) \cdot M = 4$. This implies that $\F_{|M} \cong  \O_{\P^1}(2) \oplus \O_{\P^1}(1)^{\oplus 2}$. It follows by \cite[Prop.~5.1]{el} that $\F \cong  \O_{\P^2}(2)  \oplus \O_{\P^2}(1)^{\oplus 2}$ or $T_{\P^2} \oplus \O_{\P^2}(1)$. The first case is excluded since it has $a_2=5$. Therefore  $\F \cong T_{\P^2} \oplus \O_{\P^2}(1)$ and $\det \E = a(\xi-R)$. Since $|\xi-R|$ is base-point free and defines a morphism $q : X \to \P^3$, it follows by \cite[Lemma 5.1]{lo} that there is a rank $r$ vector bundle $\G$ on $\P^3$ such that $\E \cong q^*\G$. We now claim that $\G \cong \O_{\P^3}(2)^{\oplus r}$. To this end  observe that we can see $X$ as a hyperplane section of $\P^2 \times \P^3$ under the Segre embedding given by $L=\O_{\P^2}(1) \boxtimes \O_{\P^3}(1)$. Moreover $\O_X(1) = L_{|X}$ and $q={p_2}_{|X} : X \to \P^3$, where $p_2: \P^2 \times \P^3 \to \P^3$ is the second projection. Consider the exact sequence
$$0 \to (p_2^*\G)(-(p+1)L) \to (p_2^*\G)(-pL) \to \E(-pH) \to 0.$$
Since $H^i(\E(-pH))=0$ for all $i \ge 0$ and $1 \le p \le 4$ we deduce that
$$H^i((p_2^*\G)(-(p+1)L)) \cong H^i((p_2^*\G)(-pL)) \ \hbox{for all} \ i \ge 0 \ \hbox{and} \ 1 \le p \le 4.$$
Now the K\"unneth formula gives that
$$h^0(\O_{\P^2}(p-2))h^{i-2}(\G(-p-1))=h^0(\O_{\P^2}(p-3))h^{i-2}(\G(-p)) \ \hbox{for} \ i \ge 2 \ \hbox{and} \ 1 \le p \le 4$$
and one easily sees that this implies that $H^j(\G(-2)(-s))=0$ for all $j \ge 0$ and $1 \le s \le 3$. But then $\G(-2)$ is an Ulrich vector bundle for $(\P^3, \O_{\P^3}(1))$ and therefore $\G \cong \O_{\P^3}(2)^{\oplus r}$ by Remark \ref{kno1}. Thus $(X,\O_X(1),\E)$ is as in (v1).
\end{proof}

In the sequel we will use the standard notation $V_7$ for the blow up of $\P^3$ in a point.

In order to study cases (d.2)-(d.5) we first observe the following.

\begin{claim}
\label{fe2}
In cases (d.2), (d.3), (d.4) and (d.5), case {\rm (fin)} does not hold for $\phi_{\tau}$.
\end{claim}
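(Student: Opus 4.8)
The plan is to argue by contradiction, uniformly for (d.2)--(d.5), using that $\tau=2$ in all these cases (Lemma \ref{aggiu}), so that $K_X+2H=\phi_\tau^*\L$ with $\L$ ample on $X'$. Assuming case (fin) holds for $\phi_\tau$, I would first note that, since $c_1(\E)^4=0$, for general $x$ the fiber $F_x$ is a linear space $\P^k$ with $k\ge 1$ (as in the proof of Lemma \ref{fe}, via Lemma \ref{not2}). I would then pick a line $L\subseteq F_x$ through $x$; being a line in a linear subspace of $\P^N$ it satisfies $H\cdot L=1$, and it is contracted by $\widetilde\Phi$.

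The core of the argument is to show that $\phi_\tau|_L$ is finite. For every $u\in L\subseteq F_x$ one has $F_u=F_x$, so $L\cap f_u\subseteq F_u\cap f_u$, which is $0$-dimensional by the defining property of (fin). Hence the fibers of $\phi_\tau|_L$ are finite, $\phi_\tau|_L$ is non-constant, and $\phi_\tau(L)$ is a curve; ampleness of $\L$ then gives $(K_X+2H)\cdot L=\phi_\tau^*\L\cdot L>0$. Combined with $H\cdot L=1$ this forces $K_X\cdot L\ge -1$. On the other hand $x$ is general and $[L]\in F_1(X,x)$, so Remark \ref{russo} yields $0\le \dim_{[L]}F_1(X,x)=-K_X\cdot L-2$, i.e. $K_X\cdot L\le -2$---a contradiction. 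This rules out (fin), and by the Dichotomy Lemma \ref{dico} one is necessarily in case (fact).

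I expect the only delicate point to be the finiteness of $\phi_\tau|_L$: one must check that $L\cap f_u$ is $0$-dimensional for every $u\in L$, not merely at $u=x$, and this is exactly where the identity $F_u=F_x$ for $u\in F_x$ converts the statement into the defining property of (fin) along the diagonal $F_u\cap f_u$. It is worth emphasising that the computation is pinned to $\tau=2$: for $\tau=3$ one would only obtain $K_X\cdot L\ge -2$, which is compatible with $\dim F_1(X,x)\ge 0$, so the analogous cases (c.*) genuinely require separate treatment. Finally, the same calculation produces no contradiction in case (fact), since there $L\subseteq F_x\subseteq f_x$ is contracted by $\phi_\tau$ and one only recovers $K_X\cdot L=-2$, confirming that the argument isolates precisely case (fin).
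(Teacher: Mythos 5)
Your proposal is correct and is essentially the paper's own argument: both take a line $L$ with $x \in L \subseteq F_x$ at a general point $x$, exploit $\tau = 2$ together with Remark \ref{russo} (which forces $K_X \cdot L \le -2$), and derive a contradiction with case (fin). The only difference is the direction of the logic: the paper uses nefness of $K_X+2H$ to get $K_X \cdot L \ge -2$, hence $K_X\cdot L=-2$, hence $L$ is contracted by $\phi_\tau$ and $L \subseteq F_x \cap f_x$, contradicting (fin); you instead use (fin) first to show $\phi_{\tau}|_L$ is finite (via $F_u = F_x$ for $u \in F_x$), hence $(K_X+2H)\cdot L > 0$, contradicting Remark \ref{russo} --- an equivalent rearrangement of the same computation.
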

\begin{proof}
Suppose that case (fin) holds for $\phi_{\tau}$ and let $L$ be any line  such that $x \in L \subseteq F_x$. Since $\tau = 2$ we have that $(K_X+2H) \cdot L \ge 0$, that is $K_X \cdot L \ge -2$. On the other hand, we have that
$$0 \le \dim_{[L]} F_1(X,x) = -K_X \cdot L - 2$$
so that $K_X \cdot L = - 2$ and therefore $\phi_{\tau}(L)$ is a point, so that $L \subseteq f_x \cap F_x$, a contradiction.
\end{proof}

\begin{claim} 
\label{liscie}
In case (d.2) we have that every smooth fiber is isomorphic to only one of $V_7, \P^1 \times \P^1 \times \P^1$ or $\P(T_{\P^2})$.
\end{claim}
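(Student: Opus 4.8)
The plan is to feed $\phi_{\tau}$ into the Dichotomy Lemma \ref{dico} and then read off the isomorphism type of a smooth fiber from the classification of Del Pezzo threefolds. First I would note that, by Claim \ref{fe2}, case (fin) does not hold for $\phi_{\tau}$ in case (d.2), so the Dichotomy Lemma \ref{dico} forces case (fact): for every $x \in X$ one has $F_x \subseteq f_x$, the map $\phi_{\tau}$ factors through $\widetilde \Phi$, and each fiber $f_x$ of $\phi_{\tau}$ is swept out by the linear spaces $F_{x'} \subseteq f_x$, which are the fibers of $\widetilde \Phi$.

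Next I would pin down the fiber as a Del Pezzo threefold. Let $f$ be a smooth fiber of $\phi_{\tau}$ and put $H_f = H_{|f}$. Since $(X,\O_X(1))$ is a Del Pezzo fibration over a smooth curve with $\tau = 2$, we have $K_X + 2H = \phi_{\tau}^*\L$ with $\L$ ample. A fiber over a smooth curve has trivial normal bundle, so $K_f = (K_X)_{|f}$, while $(\phi_{\tau}^*\L)_{|f} \cong \O_f$; restricting $K_X + 2H = \phi_{\tau}^*\L$ to $f$ therefore gives $-K_f = 2H_f$, that is $(f,H_f)$ is a smooth Del Pezzo threefold.

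Then I would compute the Picard number of $f$. Choose a general $x \in f$; by case (fact), $F_x = \P^k \subseteq f$ with $k \ge 1$ is the fiber of $\widetilde \Phi_{|f} : f \to \widetilde \Phi(f)$ through $x$. If $\dim \widetilde \Phi(f) = 0$ then $f = F_x$ is a linearly embedded $\P^3$, so $H_f = \O_{\P^3}(1)$ and $-K_f = \O_{\P^3}(4) \ne 2H_f$, a contradiction (equivalently, the Veronese pair $(\P^3,\O_{\P^3}(2))$ contains no line, whereas $F_x$ is linear). Hence $\widetilde \Phi_{|f}$ is a surjection onto a positive-dimensional base with positive-dimensional fibers, and since a threefold of Picard number one admits no such morphism we get $\rho(f) \ge 2$. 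For completeness one can sharpen this: the general $F_x$ has trivial normal bundle in $f$, so adjunction gives $\O_{\P^k}(k+1) = -K_{\P^k} = (-K_f)_{|F_x} = \O_{\P^k}(2)$, i.e. $k = 1$ and $\widetilde \Phi_{|f}$ is generically a line fibration over a surface.

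Finally I would invoke Fujita's classification of Del Pezzo threefolds (see \cite{fu1}): the smooth ones of Picard number at least two are exactly $V_7$, $\P^1 \times \P^1 \times \P^1$ and $\P(T_{\P^2})$, which is the assertion. The step I expect to be most delicate is the extraction of $\rho(f) \ge 2$ together with the exclusion of the degenerate Del Pezzo threefold $(\P^3, \O_{\P^3}(2))$ of degree $8$: one has to use that $F_x$ is a genuinely linearly embedded $\P^k$ of $H_f$-degree $1$, so that $\P^3$ carrying its quadratic polarization, which contains no lines and has $\rho = 1$, cannot arise as a smooth fiber. Once this is in place, the remainder is a direct appeal to the classification.
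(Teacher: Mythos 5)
The core of your argument is correct and runs parallel to the paper's: you exclude Picard rank one and then quote Fujita's classification. Your route to $\rho(f)\ge 2$ --- via the induced morphism $\widetilde\Phi_{|f}$ having positive-dimensional image and positive-dimensional fibers --- is a sound variant of the paper's, which instead applies Lemma \ref{fe}(iii) to conclude that $\Pic(f_u)\cong\Z$ would force $f_u=F_u$ to be a linearly embedded $\P^3$, contradicting $K_{f_u}=-2H_{|f_u}$; the two arguments are essentially the same idea. Your adjunction computation showing that the general $F_x$ is a line is also correct (the paper obtains this only in the claims following this one).

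However, your proof stops short of the full statement. The claim asserts that every smooth fiber is isomorphic to \emph{only one} of $V_7$, $\P^1\times\P^1\times\P^1$, $\P(T_{\P^2})$, i.e.\ that all smooth fibers of $\phi_\tau$ have the \emph{same} isomorphism type; this uniformity is what the later case division into (vi1), (vi2), (vi3) (indexed by the type of the general fiber $f_x$) rests on, and it is invoked explicitly at the end of Claim \ref{dp6}. Your argument only shows that each smooth fiber lies in the three-element list and then declares ``which is the assertion''; as it stands, nothing rules out one smooth fiber being $\P^1\times\P^1\times\P^1$ and another $\P(T_{\P^2})$ --- both have degree $6$, so constancy of $H^3\cdot f_u$ across fibers does not separate them. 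The paper closes exactly this point: if the general fiber $f_x\cong V_7$ then every smooth fiber has degree $7$, hence is $V_7$; in the two degree-$6$ cases it cites \cite[Thm.~1.4]{jr} to get $\rho(f_u)=\rho(f_x)$ for every fiber $f_u$, which separates $\P^1\times\P^1\times\P^1$ ($\rho=3$) from $\P(T_{\P^2})$ ($\rho=2$). (For smooth fibers one could alternatively note that they form a smooth proper family over a connected open subset of the base curve, hence are diffeomorphic and have the same $b_2$.) This step needs to be added to your proof.
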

\begin{proof}
By Claim \ref{fe2} we are in case (fact) for $\phi_{\tau}$, hence $\phi_{\tau}$ factorizes through $\widetilde \Phi$. Let $f_u$ be a smooth fiber. If $\Pic(f_u) \cong \Z$, Lemma \ref{fe}(iii) gives that $f_u = F_u$, hence $f_u \cong \P^3$ and $\O_X(1)_{|f_u} \cong \O_X(1)_{|F_u} \cong \O_{\P^3}(1)$.  But $\phi_{\tau} : X \to X'$ is a del Pezzo fibration for $(X, \O_X(1))$, so that ${K_X}_{|f_u} = - 2 H_{|f_u}$, giving the contradiction 
$$-4H_{\P^3}=K_{f_u} = {K_X}_{|f_u} = - 2 H_{|f_u}=-2H_{\P^3}.$$ 
Now the classification of del Pezzo $3$-folds (see for example \cite[\S 1]{lp}, \cite{fu1}) implies that $f_u$  is either $V_7, \P^1 \times \P^1 \times \P^1$ or $\P(T_{\P^2})$. In the first case $f_u$ is a del Pezzo $3$-fold of degree $7$ and then $f_u \cong V_7$. In the other two cases observe that $\rho(f_u) = \rho(f_x)$ by \cite[Thm.~1.4]{jr} and therefore $f_u \cong \P^1 \times \P^1 \times \P^1$ when $f_x \cong \P^1 \times \P^1 \times \P^1$ and $f_u \cong \P(T_{\P^2})$ when $f_x \cong \P(T_{\P^2})$. 
\end{proof}

\begin{claim} 
In case (d.2), if $f_x \cong V_7$, then $(X,\O_X(1),\E)$ is as in (vi1).
\end{claim}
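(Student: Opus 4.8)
The plan is to run the Dichotomy Lemma \ref{dico} for $\phi_{\tau}$, extract the linear $\P^1$-bundle structure of $\widetilde\Phi$ via Lemma \ref{ii}, and then read off the base geometry from the canonical bundle formula. First, by Claim \ref{fe2}, in case (d.2) case (fin) cannot hold for $\phi_{\tau}$, so we are in case (fact): $\phi_{\tau}$ factorizes as $\phi_{\tau} = h \circ \widetilde\Phi$ for a morphism $h : B := \widetilde{\Phi(X)} \to X'$, every fiber $F_x$ of $\widetilde\Phi$ is a linear space, and each fiber $f_x = \widetilde\Phi^{-1}(h^{-1}(\phi_{\tau}(x)))$ is swept out by the fiberwise–disjoint family $\{F_{x'} : x' \in f_x\}$. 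For general $x$ we have $f_x \cong V_7 = \P(\O_{\P^2}\oplus\O_{\P^2}(1))$, so $\widetilde\Phi|_{f_x} : V_7 \to \widetilde\Phi(f_x)$ contracts the members of this family. Since $\rho(V_7)=2$ and the only elementary contraction of $V_7$ whose positive–dimensional fibers are disjoint linear spaces partitioning $V_7$ is the $\P^1$–bundle projection onto $\P^2$ (the other contraction, the blow–down to $\P^3$, has a single exceptional $\P^2$), I conclude that the general fiber $F_x$ is a line, $\dim B = 3$, and the general fiber of $h$ is $\widetilde\Phi(f_x) \cong \P^2$.

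The main obstacle is then to prove that $\widetilde\Phi$ is equidimensional, i.e. that no special fiber $F_{x_0}$ jumps to a $\P^2$ or a $\P^3$; the difficulty is entirely concentrated in the possibly singular fibers $f_{x_0}$ of the Del Pezzo fibration. I would combine upper–semicontinuity of fiber dimension, the fact that $\widetilde\Phi(f_{x_0}) = h^{-1}(\phi_{\tau}(x_0))$ is a fiber of $h : B \to X'$ over a curve and hence has pure dimension $\dim B - 1 = 2$, and the contraction geometry of the (possibly singular) Del Pezzo $3$–fold $f_{x_0}$: a jump $F_{x_0} = \P^3$ would make $\widetilde\Phi$ contract a whole $3$–dimensional component of $f_{x_0}$, forcing a reducible fiber of $\phi_{\tau}$ incompatible with the nef value morphism of the smooth $X$, while a jump $F_{x_0} = \P^2$ would require $f_{x_0}$ to admit a morphism onto the surface $h^{-1}(\phi_{\tau}(x_0))$ blowing a divisorial $\P^2$ down to a point while being generically a line–fibration, which its Mori cone does not allow. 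This is exactly the kind of jumping–fiber analysis carried out for $(\mathrm{c}.3)$ in Claim \ref{serve}, and I would mirror it here, also recording that $K_X \cdot F_x = -2$ for the line fibers (forced by $\tau=2$ and Remark \ref{russo}). The upshot is that $\widetilde\Phi$ is a linear $\P^1$–bundle onto a smooth threefold $B$.

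Granting equidimensionality, Lemma \ref{ii} gives that $(X,\O_X(1),\E)$ is a linear Ulrich triple with $p = \widetilde\Phi : \P(\F) \to B$ and $b=3$, where $\F$ is a very ample rank $2$ bundle on $B$, and $\phi_{\tau} = h \circ p$. Writing $\xi = \O_{\P(\F)}(1)$ and using $K_X = -2\xi + p^*(K_B + \det\F)$ together with $\tau = 2$, I get $K_X + 2H = p^*(K_B + \det\F) = \phi_{\tau}^*\L = p^*h^*\L$, hence $K_B + \det\F = h^*\L$: thus $(B,\det\F)$ is a Del Pezzo fibration over $X'$. Restricting to a general fiber $\P^2$ of $h$ gives $\det\F|_{\P^2} = -K_{\P^2} = \O_{\P^2}(3)$, so the fibers of $h$ are Del Pezzo surfaces of degree $9$.

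Finally I would upgrade "general fiber $\P^2$" to "every fiber $V_7$". Since $h$ is flat with Gorenstein fibers (from $K_B + \det\F = h^*\L$) and the unique Gorenstein Del Pezzo surface of degree $9$ is $\P^2$, every fiber of $h$ is $\P^2$; consequently every fiber of $\phi_{\tau}$ is $\P(\F|_{\P^2})$, a smooth $\P^1$–bundle over $\P^2$. On each such fiber $\F|_{\P^2}$ is very ample of rank $2$ with $c_1 = 3$ and $c_2 = 2$, the value $c_2 = 2$ being deformation–invariant and computed on the general fiber $V_7 = \P(\O_{\P^2}\oplus\O_{\P^2}(1))$. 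Bogomolov's inequality forbids a semistable bundle with these invariants, so $\F|_{\P^2}$ is unstable; the resulting destabilizing sub–line bundle splits it because $H^1$ of every line bundle on $\P^2$ vanishes, giving $\F|_{\P^2} \cong \O_{\P^2}(1)\oplus\O_{\P^2}(2)$ and hence each fiber of $\phi_{\tau}$ is $\P(\O_{\P^2}(1)\oplus\O_{\P^2}(2)) \cong V_7$. This places $(X,\O_X(1),\E)$ in case (vi1).
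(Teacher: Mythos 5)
Your skeleton (Claim \ref{fe2} puts you in case (fact), the contraction structure of $V_7$ makes the general fiber of $\widetilde \Phi$ a line, Lemma \ref{ii} then gives the linear Ulrich triple with $b=3$, and $K_B+\det \F = h^*\L$ gives the Del Pezzo fibration on $B$) is the same as the paper's, but the two steps you yourself flag as the crux are not proved, and both gaps occur exactly where the paper invokes an input you are missing: Fujita's theorem on Del Pezzo fibrations over curves, \cite[(4.7)]{fu3}, which says that when the general fiber is $V_7$ (degree $7$), \emph{every} fiber of $\phi_{\tau}$ is isomorphic to $V_7$ --- there are no singular fibers at all. This is the paper's first line, and it makes your ``main obstacle'' disappear. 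Your substitute argument is not one: you appeal to ``the contraction geometry'' and ``the Mori cone'' of a possibly singular fiber $f_{x_0}$, but without a classification of those fibers (precisely the hard part: in the degree-$6$ cases (vi2)/(vi3) the paper needs Subclaims \ref{normale}--\ref{c6} just to identify them) you know nothing about their cones of curves, nor even that they are irreducible --- which you need already to rule out $F_{x_0}=\P^3$, and which is again a theorem of Fujita, \cite[(4.6)]{fu3}, not a formal consequence of $\phi_\tau$ being a nef value morphism (fibrations can perfectly well have reducible fibers). Nor can Claim \ref{serve} be ``mirrored'': there the morphism in question is a $\P^2$-bundle with known fibers. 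Once all fibers are known to be $V_7$, the paper's proof that every $F_u$ is a line is a concrete parity computation on $V_7$: torsion-freeness of $\Pic(V_7)$ gives $H_{|f_u}=2\widetilde H - E$, so any fiber of $\Phi$ must meet $E$ (otherwise its hyperplane class would be divisible by $2$); a $\P^2$-fiber is then forced to equal $E$, and a line $F_{u'}$ through a point $u'\notin E$ would still have to meet $E=F_u$, contradicting disjointness of fibers.

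The final step of your proposal is not merely incomplete but rests on a false statement: $\P^2$ is \emph{not} the unique Gorenstein Del Pezzo surface of degree $9$. The cone over an elliptic normal curve of degree $9$ is a normal Gorenstein surface with $-K$ ample of degree $9$, it has the same Hilbert polynomial as $(\P^2,\O_{\P^2}(3))$, and it even occurs as a flat limit of $\P^2$ (its vertex is a smoothable simple elliptic singularity); moreover, a priori a fiber of $h$ is only a connected Cartier divisor in $B$, so reducible or non-reduced fibers would also have to be excluded. The paper avoids any such classification: since $\phi_{\tau}^{-1}(x')\cong V_7$ for every $x'\in X'$, the fiber $h^{-1}(x')$ is the (smooth) base of the $\P^1$-bundle $p_{|V_7}$, each line fiber of $p_{|V_7}$ meets the exceptional plane $E$ in exactly one point and is not contained in it, so $p_{|E}: E \to h^{-1}(x')$ is a surjective closed embedding and $h^{-1}(x')\cong \P^2$. (Your Bogomolov argument splitting $\F_{|\P^2}\cong \O_{\P^2}(1)\oplus\O_{\P^2}(2)$ is correct but becomes unnecessary at that point.) In short, your route only becomes a proof after importing Fujita's results \cite[(4.6), (4.7)]{fu3}, which is exactly what the paper does at the outset.
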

\begin{proof}
By \cite[(4.7)]{fu3} we have that $f_u \cong V_7$ for every $u \in X$. We claim that $F_u$ is a line for every $u \in X$. Assume to the contrary that there is a $u \in X$ such that $F_u$ is not a line. We know that $F_u \subset f_u$. Then it cannot be that $F_u=\P^3$, for otherwise, $\P^3 \cong V_7$, a contradiction. Therefore $F_u = \P^2$. Let $E$ be the exceptional divisor of the blow-up $\varepsilon : f_u \cong V_7 \to \P^3$ in a point and let $\widetilde H$ be the pull back of a plane. Observe that since $K_{f_u} = (K_X + f_u)_{|f_u} = {K_X}_{|f_u} = -2H_{|f_u}$ and, as is well known, $\Pic(V_7)$ has no torsion, we get that $H_{|f_u} = 2\widetilde H -E$. Since $E$ is the only linear plane contained in $V_7$ (with respect to $2\widetilde H-E$), we deduce that $F_u = E$. Let now $u' \in f_u \setminus E$. Then $F_{u'}$ is not contained in $E$, and therefore, by what we have just proved, $F_{u'}$ is a line. As above $F_{u'} \cap E \ne \emptyset$, for otherwise $\O_{\P^1}(1) = (2\widetilde H-E)_{|F_{u'}} = 2\widetilde H_{|F_{u'}}$, a contradiction. But then $F_{u'} \cap F_u = F_{u'} \cap E \ne \emptyset$ and therefore $F_{u'} = F_u=E$, a contradiction. Hence $F_u$ is a line for every $u \in X$ and Lemma \ref{ii} implies that $(X,\O_X(1),\E)$ is a linear Ulrich triple with $p = \widetilde \Phi : X \cong \P(\F) \to B$ and $b = 3$. As we know, $\phi_{\tau} = h \circ p$ where $h : B \to X'$. Also there is an ample line bundle $\L$ on $X'$ such that
$$mp^*(K_B + \det \F) = m (K_X + 2H) = \phi_{\tau}^* \L = p^*(h^*\L)$$
and therefore $m(K_B + \det \F) = h^*\L$, so that $(B, \det \F)$ is a del Pezzo fibration over $X'$. Also, for every $x' \in X'$, we have that $V_7 \cong \P(\F_{|h^{-1}(x')})$, hence $h^{-1}(x')$ is a smooth surface and we have a surjective morphism $p_{|V_7} : V_7 \to h^{-1}(x')$. Now $E$ is a plane and the fibers $F_{u'}, u' \in V_7$ of $p_{|V_7}$ are lines. As above $F_{u'} \cap E \ne \emptyset$. On the other hand, it cannot be that $F_{u'} \subset E$, for otherwise the morphism $\Phi_{|E} : \P^2 \cong E \to \Phi(X)$, that contracts $F_{u'}$ to a point, would be constant, therefore implying the contradiction $F_{u'} = E$. Hence $F_{u'} \cap E$ is a point for every $u' \in V_7$ and $p_{|E} : E \to h^{-1}(x')$ is a closed embedding, thus giving that $h^{-1}(x') \cong \P^2$.
\end{proof}

\begin{claim} 
\label{dp6}
In case (d.2), if $f_x \cong \P^1 \times \P^1 \times \P^1$, then $(X,\O_X(1),\E)$ is as in (vi2), while if $f_x \cong \P(T_{\P^2})$, then $(X,\O_X(1),\E)$ is as in (vi3).
\end{claim}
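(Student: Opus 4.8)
The plan is to run, for both sub-cases, the same machine used in the preceding claim for $V_7$, while accounting for the fact that the smooth fibers $\P^1\times\P^1\times\P^1$ and $\P(T_{\P^2})$ may now degenerate. By Claim \ref{fe2} we are necessarily in case (fact) for $\phi_\tau$, so $\phi_\tau=h\circ\widetilde\Phi$ for some $h:\widetilde{\Phi(X)}\to X'$ and, for every $u\in X$, the fiber $F_u\subseteq f_u$ is a linear space of positive dimension. On a general (hence smooth) fiber $f_u$, which by Claim \ref{liscie} is $\P^1\times\P^1\times\P^1$ or $\P(T_{\P^2})$, the only positive-dimensional linear subspaces are the lines of the natural $\P^1$-bundle projections onto $\P^1\times\P^1$, respectively $\P^2$; hence $F_u$ is a line for general $u$ and $\dim\widetilde{\Phi(X)}=3$. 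The two sub-cases then diverge according to whether $\widetilde\Phi$ remains equidimensional over the singular fibers.

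First I would treat $f_x\cong\P^1\times\P^1\times\P^1$. Here I would show that $F_u$ is a line for every $u\in X$. Using the degeneration theory of Del Pezzo fibrations (\cite[(4.7)]{fu3}) together with the constancy of the Picard number (\cite[Thm.~1.4]{jr}), the singular fibers are of the form $\P^1\times\mathcal Q$ with $\mathcal Q$ a quadric cone; since $\mathcal Q$ carries the restriction of a locally free sheaf, these are $\P^1$-bundles over $\mathcal Q$ with line fibers and contain no plane, so no $F_u$ can have dimension $\ge 2$. Thus $\widetilde\Phi$ is equidimensional with one-dimensional fibers, and by \cite[Prop.~3.2.1]{bs2} and Lemma \ref{ii} the map $p=\widetilde\Phi:X\cong\P(\F)\to B:=\widetilde{\Phi(X)}$ is a linear $\P^1$-bundle over a smooth threefold and $(X,\O_X(1),\E)$ is a linear Ulrich triple with $b=3$. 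Writing $\phi_\tau=h\circ p$ and combining $K_X+2H=p^*(K_B+\det\F)$ with $m(K_X+2H)=\phi_\tau^*\L=p^*(h^*\L)$ gives $m(K_B+\det\F)=h^*\L$, so $(B,\det\F)$ is a Del Pezzo fibration over $X'$. Finally, each fiber $f_u=\P(\F_{|h^{-1}(x')})$ displays the degree $6$ Del Pezzo threefold as a $\P^1$-bundle over the surface $h^{-1}(x')$, forcing $h^{-1}(x')\cong\P^1\times\P^1$ for general $x'$ and $h^{-1}(x')\cong\mathcal Q$ over the finitely many $x'$ with singular fiber; this is precisely (vi2).

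For $f_x\cong\P(T_{\P^2})$ the generic analysis is the same—$F_u$ is a line for general $u$ and $h$ has general fiber $\P^2$—but the conclusion is genuinely weaker, and I would not expect to promote it to a linear Ulrich triple. The step I expect to be the main obstacle is the identification of the singular fibers: the general hyperplane section of the Segre $\P^2\times\P^2\subset\P^8$ is the smooth flag threefold $\P(T_{\P^2})$, which contains no plane, whereas the relevant degeneration is a special, singular hyperplane section that does contain a $\P^2$. Concretely, one must recognize this singular fiber as the tautological image of $\P(\F)$ with $\F=\O_{\mathbb F_1}(C_0+f)\oplus\O_{\mathbb F_1}(C_0+2f)$ over the Hirzebruch surface $\mathbb F_1$; the key feature is that $\O_{\mathbb F_1}(C_0+f)=\pi^*\O_{\P^2}(1)$ (with $\pi:\mathbb F_1\to\P^2$ the blow-down) satisfies $(C_0+f)\cdot C_0=0$ and is merely globally generated, so the tautological morphism contracts $C_0$ and creates the plane. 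Because of this plane the map $\widetilde\Phi$ ceases to be equidimensional over the singular locus, Lemma \ref{ii} is unavailable, and one can only record the factorization $\phi_\tau=h\circ\widetilde\Phi$ with $h$ of general fiber $\P^2$, exactly as in (vi3).
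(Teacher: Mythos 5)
Your setup is sound and matches the paper's frame: Claim \ref{fe2} puts you in case (fact), the general (smooth) fibers force $F_u$ to be a line there, and the final assembly for (vi2) via Lemma \ref{ii} plus the computation $m(K_B+\det\F)=h^*\L$ is exactly right. But the core of the claim is the identification of the \emph{singular} fibers of $\phi_\tau$, and there your proof has a genuine gap: you assert that in the $\P^1\times\P^1\times\P^1$ case the singular fibers are $\P^1\times\mathcal Q$ ``using \cite[(4.7)]{fu3} together with \cite[Thm.~1.4]{jr}'', but neither citation delivers this. Fujita's (4.7) is the statement for \emph{degree $7$} fibrations (it is what makes the $V_7$ claim short); for degree $6$ his theory only gives that the fibers are integral, non-cones, of degree $6$ in $\P^7$ --- it does not rule out non-normal degenerations, nor does it decide between the two normal ones ($\P^1\times\mathcal Q$ and the singular hyperplane section of $\P^2\times\P^2$, both of which are degree-$6$ degenerations). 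And \cite[Thm.~1.4]{jr} concerns fibrations whose fibers have terminal singularities: $\P^1\times\mathcal Q$ is singular along a curve, hence \emph{not} terminal, so that theorem cannot be invoked for this case --- which is precisely why the paper, in Subclaim \ref{c6}, has to replace it by a separate argument (a deformation of $\widetilde\Phi(V)\cong\mathcal Q$ to $\widetilde\Phi(f_x)$ inside $\widetilde{\Phi(X)}$ and a $K^2$ computation distinguishing $\P^1\times\P^1$ from $\P^2$). Your sentence ``since $\mathcal Q$ carries the restriction of a locally free sheaf, these are $\P^1$-bundles over $\mathcal Q$ with line fibers and contain no plane'' is circular: it presupposes the very identification you are trying to establish.

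What the paper actually does, and what is missing from your proposal, is the following chain: normality of every singular fiber $V$ (Subclaim \ref{normale}), proved not from general Del Pezzo theory but from the Ulrich-specific fact that $V$ is covered by the disjoint linear fibers of $\Phi$; then containment of $V$ in a rational normal fourfold scroll $S(0,0,1,3)$, $S(0,0,2,2)$ or $S(0,1,1,2)$ with $\widetilde V\sim 2\xi-2R$ (Subclaim \ref{c1}); then smoothness of the quadric sections $V\cap G_t$ (Subclaim \ref{smq}, again via the disjoint-lines structure); then exclusion of $S(0,0,1,3)$ and reconstruction of $V$ as the tautological image of $\P(\F)$ with $\F=\O_{\mathbb F_0}(f)\oplus\O_{\mathbb F_0}(2C_0+f)$, resp. $\F=\O_{\mathbb F_1}(C_0+f)\oplus\O_{\mathbb F_1}(C_0+2f)$ (Subclaim \ref{c4}), leading to $V\cong\P^1\times\mathcal Q$, resp. to a singular hyperplane section of the Segre $\P^2\times\P^2$ (Subclaims \ref{c5}, \ref{c5bis}); and finally the correlation of singular type with smooth type (Subclaim \ref{c6}), where only the ordinary-double-point case can use \cite{jr}. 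For the (vi3) half you correctly name this identification as the main obstacle, but you only describe the expected answer rather than prove it, so that half remains unproven as well. In short, the skeleton of your argument is the paper's, but the several pages of projective-geometric work that constitute the actual proof are replaced by assertions resting on inapplicable citations.
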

\begin{proof}
By Claim \ref{fe2}  we are in case (fact) and, for every $u \in X$, $f_u$ is a disjoint union of fibers $F_v$ of $\Phi$. Also, $K_{f_x} = -2H_{|f_x}$, hence $H^3 \cdot f_u =  H^3 \cdot f_x = 6$. 
\begin{subclaim} 
\label{normale}
$f_u$ is normal for every $u \in X$.
\end{subclaim}
\begin{proof}
Note that $f_u$ is integral by \cite[(4.6)]{fu3}, hence there is no linear $\P^3$ contained in $f_u$. Moreover, $f_u$ is not a cone by \cite[\S 1, p.~232]{fu3}. Assume that $f_u$ is not normal.  It follows by \cite[Thm.~2.1(a)]{fu4} and \cite[Thm.~II]{fu5} that $f_u$ is the projection $\pi_O : \Sigma \to f_u$ of a rational normal threefold scroll $\Sigma \subset \P^8$ of degree $6$ from a point $O \not\in \Sigma$. In particular a general curve section $C$ of $f_u$ is a rational curve of degree $6$ in $\P^5$ and with arithmetic genus $1$, because so is the arithmetic genus of a general curve section of $f_x$. Therefore $C$ has a unique double point and this implies that ${\rm Sing}(f_u)$ is a plane. Then there is a quadric $Q \subset \Sigma$ such that $\pi_O(Q) = {\rm Sing}(f_u)$ and it is easily checked that then $\Sigma$ is the embedding of $\P(\O_{\P^1}(4) \oplus \O_{\P^1}(1) \oplus \O_{\P^1}(1))$ by the tautological line bundle and $Q$ is the embedding of $\P(\O_{\P^1}(1) \oplus \O_{\P^1}(1))$. Let $p : \Sigma \cong \P(\O_{\P^1}(4) \oplus \O_{\P^1}(1) \oplus \O_{\P^1}(1)) \to \P^1$ be the projection map. Then any plane contained in $\Sigma$ must be a fiber of $p$. Also, since $p_{|Q} : Q \to \P^1$ is surjective, it follows that the planes $M_z := p^{-1}(z), z \in \P^1$ intersect $Q$, and therefore $L_z := M_z \cap Q$ is an effective divisor on $Q$. On the other hand, $M_z \not\subset \langle Q \rangle = \P^3$, for otherwise $L_z$ would be a hyperplane section of $Q$ and then, for $z \ne z' \in \P^1$ we would get the contradiction $\emptyset \ne L_z \cap L_{z'} \subset M_z \cap M_{z'} = \emptyset$. Therefore, since $L_z \subset M_z \cap \langle Q \rangle$, it follows that $L_z$ is a line for every $z \in \P^1$. Now $\pi_O(L_z)$ and $\pi_O(L_{z'})$ are two lines in the plane ${\rm Sing}(f_u)$, hence they intersect. Therefore $\pi_O(M_z)\cap \pi_O(M_{z'}) \ne \emptyset$ for any $z, z' \in \P^1$. 

Let $u' \in f_u$ be a general point. If $F_{u'} = \P^2$ then, for general $u'' \in f_u$, there are two planes $M_{z'}, M_{z''} \subset \Sigma$ such that  $\pi_O(M_{z'}) =F_{u'}, \pi_O(M_{z''}) =F_{u''}$. But this gives the contradiction 
$$\emptyset = F_{u'} \cap F_{u''} = \pi_O(M_{z'})\cap \pi_O(M_{z''}) \ne \emptyset.$$ 
Assume now that $F_{u'}$ is a line. Observe that $F_{u'} \not\subset \pi_O(M_z)$ for any $z \in \P^1$, for otherwise $\Phi_{|\pi_O(M_z)} : \P^2 = \pi_O(M_z) \to \Phi(X)$ must be constant, since it contracts the line $F_{u'}$ to a point. But then $F_{u'} = \pi_O(M_z)$, a contradiction. Hence there is a line $L \subset \Sigma$ such that $F_{u'} = \pi_O(L)$ and $L \not\subset Q$. But lines in $\Sigma$ not contained in $Q$ must be contained in a plane $M_z$, thus giving a contradiction. 
\end{proof}

We assume from now on that
$$V:= f_u \ \hbox{is a singular fiber of} \ \phi_{\tau}.$$
Note that $K_V = -2H_{|V}$ and $V \subset \P^7 = \P H^0(H_{|V})$ is of degree $6$ by \cite[(1.5)]{fu3}. 

Let us recall some notation. Let $a_s \ge \ldots \ge a_0 \ge 0$ be integers and let $S=S(a_0,\ldots,a_s)= \varphi_{\xi}(\P(\G)) \subset \P^{\sum_{i=0}^s a_i+s}$ be the rational normal scroll, where $\G = \O_{\P^1}(a_0) \oplus \ldots \oplus \O_{\P^1}(a_s)$, $p : \P(\G) \to \P^1$ and $\xi$ is the tautological line bundle. We denote, for every $t \in \P^1$, by $R=R_t$ a ruling on $\P(\G)$ and by $G=G_t$ its image on $S$. When some $a_i$ are zero, $S(a_0,\ldots,a_s)$ is a cone with vertex $S_0$ and inverse image $W_0$ on $\P(\G)$. If $a_k=1$ for some $k$, let
$l(S) = \max\{k \ge 0: a_k=1\}$, let $S_1=S(a_0,\dots,a_{l(S)})$, $W_1 =\P(\O_{\P^1}(a_0) \oplus \ldots \oplus \O_{\P^1}(a_{l(S)}))$ with tautological line bundle $\xi_1$ and ruling $R_1$. We recall that if a line $L \subset S$ is not contained in a ruling, then $L \subset S_1$. For every subvariety $Y \subset S$ such that $Y \not\subseteq S_0$ we denote its strict transform on $\P(\G)$ by $\widetilde Y := \overline{\varphi_{\xi}^{-1}(Y \setminus S_0)}$. 

\begin{subclaim}
\label{c1}
Let $S_V \subset\P^7$ be one of $S(0,0,1,3), S(0,0,2,2)$ or $S(0,1,1,2)$ and let $\G_V$ be one of $\O_{\P^1}^{\oplus 2} \oplus \O_{\P^1}(1) \oplus \O_{\P^1}(3), \O_{\P^1}^{\oplus 2} \oplus \O_{\P^1}(2)^{\oplus 2}$ or $\O_{\P^1} \oplus \O_{\P^1}(1)^{\oplus 2} \oplus \O_{\P^1}(2)$. Then $V \subset S_V, \widetilde V \sim 2\xi - 2 R$ on $\P(\G_V), W_0 \subset \widetilde V, S_0 \subset V$ and $\varphi_{\xi}^{-1}(V) = \widetilde V$.  
\end{subclaim}
\begin{proof} 
Let $v_0 \in {\rm Sing}(V)$, let $\pi_{v_0} : V \dashrightarrow \P^6$ be the projection with center $v_0$ and let $V' \subset \P^6$ be its image. Since $V$ is not a cone by \cite[p.~232]{fu3}, it follows that $V'$ is an irreducible non-degenerate threefold of degree at most $4$. Hence $\deg V' = 4$ and \cite[Thm.~1]{eh2} implies that either $V'$ is a rational normal scroll or a cone over the Veronese surface $S \subset \P^5$. In the second case, we have that $V \subset \mathcal C$ the cone with vertex a line $L$ over $S \subset \P^5$. Let $\pi_L : \P^7 \dashrightarrow \P^5$ be the projection with center $L$. Note that any plane $M \subset \mathcal C$ is of type $\langle p, L \rangle, p \in S$. Now, for any point $z \in M \setminus L$ we have that $p=\pi_L(z) \in S$ and $M=\langle p, L \rangle$. Also, any line $L' \subset \mathcal C$ must intersect $L$, for otherwise $\pi_L(L')$ is a line in $S$, a contradiction. If there is a point $v \in V$ such that $F_v$ is a plane, then $L \subset F_v$ and picking any $v' \in V \setminus F_v$ we find the contradiction 
$$\emptyset \ne F_{v'} \cap L \subseteq F_{v'} \cap F_v =  \emptyset.$$
Therefore $F_v$ is a line for every $v \in V$. This means that $\dim \Phi(V) = 2$ and $V$ is disjoint union of the $2$-dimensional family of lines $\{\Phi^{-1}(y), y \in \Phi(V)\}$. Since they all intersect $L$, it follows that there is a point $v' \in L$ contained in infinitely many such lines, a contradiction. 

Therefore  $V'$ is a rational normal scroll of degree $4$, hence it can be one of $S(0,1,3), S(0,2,2), S(0, 0, 4)$ or $S(1,1,2)$ and, as a consequence, $V \subset S_V \subset\P^7$, where $S_V$ is one of $S(0,0,1,3), S(0,0,2,2), S(0, 0, 0, 4)$ or $S(0,1,1,2)$. Let $C$ be a general curve section of $V$ so that $C$ is a smooth irreducible elliptic curve of degree $6$. If $S_V = S(0, 0, 0, 4)$ then $C \subset S(0,4)$, which is not possible since $S(0,4)$ does not contain an elliptic curve of degree $6$. Thus $S_V$ is one of $S(0,0,1,3), S(0,0,2,2)$ or $S(0,1,1,2)$. Hence the vertex of $S_V$ is a point or a line and we can consider $\widetilde V \sim a\xi+bR$ on $\P(\G_V)$. It follows that $V$ is a Weil divisor linearly equivalent to $aH+bG$ on $S_V$ and $C \sim aH_{|Y}+bG_{|Y}$, where $Y$, the general surface section of $S_V$, is a non-degenerate smooth irreducible surface of degree $4$ in $\P^5$, that is $Y = S(1,3)$ or $S(2,2)$. Then this gives that $a=2$ and $b= -2$. Now $W_0$ is covered by curves $\Gamma$ contracted by $\varphi_{\xi}$, hence $\Gamma \equiv c(\xi^3-4\xi^2R), c > 0$. Therefore $\Gamma \cdot \widetilde V = c(\xi^3-4\xi^2R) \cdot (2\xi - 2 R) = -2c<0$, hence $\Gamma \subset \widetilde V$ and it follows that $W_0 \subset \widetilde V, S_0 \subset V$ and $\widetilde V = \varphi_{\xi}^{-1}(V)$.
\end{proof}

\begin{subclaim}
\label{c3}
Let $S_V = S(0,1,1,2)$. Then $V \cap S_1 = M_1 \cup M_2$, where $M_1$ and $M_2$ are two planes. Moreover either $S_0 \subset M_1=M_2$ or $M_1 \cap M_2 = S_0$ and both $M_1 \cap G_t$ and $M_2 \cap G_t$ are lines for every $t \in \P^1$. Furthermore $\langle S(1,2), M_1 \rangle = \langle S(1,2), M_2 \rangle = \P^7$.
\end{subclaim}
\begin{proof} 
We have $S_V = \varphi_{\xi}(\G)$ where $\G = \O_{\P^1} \oplus \O_{\P^1}(1)^{\oplus 2} \oplus \O_{\P^1}(2)$. It  easily follows that $V \cap S_1 =  \varphi_{\xi}(\widetilde V \cap W_1)$. Moreover $W_0 \subset \widetilde V \cap W_1$ by Subclaim \ref{c1}, hence $S_0 \subset V \cap S_1$. Now let $D = \P(\O_{\P^1}(1)^{\oplus 2} \oplus \O_{\P^1}(2))$. Note that $D \sim \xi$ and $D \cap W_0 = \emptyset$, hence $\varphi_{\xi}(D)$ is a hyperplane section of $S$ not passing through the vertex $S_0$. Let $Q = \P(\O_{\P^1}(1)^{\oplus 2})$, with tautological line bundle $\xi_Q$ and fiber $R_Q$. Note that $Q \subseteq D \cap W_1$. Also, since $W_1 \sim \xi-2R$, we have that $\xi^2 \cdot D \cdot W_1=\xi^3 \cdot (\xi-2R)=2 = \xi^2 \cdot Q$ and therefore $Q = D \cap W_1$. Now $\widetilde V \cap W_1 \cap D = \widetilde V \cap Q \sim 2\xi_Q-2R_Q$, hence $\widetilde V \cap W_1 \cap D$ is a curve of type $(0,2)$ (or $(2,0)$) on $Q$, that is the union of two, possibly coincident, lines $L_1$ and $L_2$ on $Q$. Also note that $\widetilde V \cap W_1 \sim 2\xi_1 - 2R_1$ on $W_1$ and $H^0(\xi_1 - \widetilde V \cap W_1) = H^0(-\xi_1+2R_1)=0$ and therefore $V \cap S_1 = \varphi_{\xi}(\widetilde V \cap W_1)$ is a non-degenerate degree $2$ surface in $\P^4 = \P(H^0(\xi_1))$. This implies that $V \cap S_1$ is reducible, hence $V \cap S_1 = M_1 \cup M_2$, where $M_1$ and $M_2$ are two, possibly coincident, planes. Also, in case $M_1 \ne M_2$, they must intersect in a point, hence in $S_0$. Now assume that $M_1 \subset G_t$. Let $\widetilde M_i, i = 1,2$ be the strict transforms, so that $\widetilde V \cap W_1 = \widetilde M_1 \cup \widetilde M_2$ and $\widetilde M_1 \subset R_t$. Then $\widetilde M_1 = W_1 \cap R_t$, hence $\widetilde M_1 \sim R_1$ and therefore $\widetilde M_2 \sim 2\xi_1 - 3R_1$, a contradiction since $H^0(2\xi_1 - 3R_1)=0$. Hence $\widetilde M_1 \not\subset R_t$ for every $t \in \P^1$ and then $\pi_{|\widetilde M_1} : \widetilde M_1 \to \P^1$ is surjective. Hence $\widetilde M_1 \cap R_t \ne \emptyset$ for every $t \in \P^1$ and then it is a divisor on $\widetilde M_1$. It follows that both $M_1 \cap G_t$ and $M_2 \cap G_t$ are lines for every $t \in \P^1$. Also let $\widetilde M_i \sim a_i \xi_1 + b_iR_1, i = 1, 2$, so that $a_i \ge 0, a_1 + a_2 = 2$ and $1 = \xi_1^2 \cdot (a_1 \xi_1 + b_1R_1) = 2a_1+b_1$. If $a_1 = 0$ then $b_1=1$ and we get the same contradiction as above. Similarly if $a_2=0$. Therefore $\widetilde M_1 \sim \xi_1 - R_1$. Since $\langle S(1,2) \rangle = \P^4$, to prove that $\langle S(1,2), M_1 \rangle = \P^7$, we just need to prove that $H^0(\I_{\widetilde M_1 \cup W_{12}/\P(\G)}(\xi))=0$, where $W_{12} = \P(\O_{\P^1}(1) \oplus \O_{\P^1}(2)) \subset \P(\G)$. If not, there is a $D_1 \in |\xi|$ such that $D_1 \cap W_1 = \widetilde M_1 + W_1 \cap W_{12}$. But then $W_1 \cap W_{12} \sim R_1$, hence it maps to a point in $\P^1$. But this is a contradiction since $W_1 \cap W_{12}$ contains $\P(\O_{\P^1}(1))$.
\end{proof}

\begin{subclaim}
\label{smq}
Let $t \in \P^1$ and let $\Sigma_t = V \cap G_t$. Then $\Sigma_t$ is a smooth quadric in $\P^3=G_t$ such that $S_0 \subset \Sigma_t$. In particular, $\dim({\widetilde \Phi}(V))=2$.
\end{subclaim}
\begin{proof}
Note that $\Sigma_t = \varphi_{\xi}(\widetilde V \cap R_t)$ and, by Subclaim \ref{c1}, $\deg \Sigma_t = \xi^2 \cdot (2\xi-2R) \cdot R = 2$, so that $\Sigma_t $ is a quadric in $\P^3=G_t$. Also $S_0 \subset V$ by Subclaim \ref{c1}, hence $S_0 \subset \Sigma_t$. To prove that $\Sigma_t$ is smooth, let $Z = S_1$ when $S_V = S(0,0,1,3)$ or $S(0,1,1,2)$ and $Z=S_0$ when $S_V=S(0,0,2,2)$. Observe that $V \not \subset Z$, for otherwise we would have that either $V = Z = S(0,0,1)=\P^3$ or $V = Z = S(0,1,1)$ a quadric cone in $\P^4$, a contradiction. Let $\tilde v \in \widetilde V \setminus \varphi_{\xi}^{-1}(Z)$ and $v = \varphi_{\xi}(\tilde v)$. Note that $v \not\in S_0$, for otherwise $\varphi_{\xi}(\tilde v) = v \in S_0 \subseteq Z$. Let $L$ be any line such that $v \in L \subseteq F_v$. Then $L \not\subset S_0$ and $\tilde v \in \widetilde L$, because $\tilde v \not\in W_0$. Since $L \not\subset S_1$ we get that $L \subset G_{\pi(\tilde v)}$ and therefore $F_v \subset G_{\pi(\tilde v)}$. Hence $\dim_v F_v \cap G_{\pi(\tilde v)} = \dim_v F_v \ge 1$ and it follows by semicontinuity that $\dim_v F_v \cap G_{\pi(\tilde v)} \ge 1$ for every $\tilde v \in \widetilde V$. We have that
$$\Sigma_t = \bigsqcup\limits_{y \in \Phi(V)} \Phi^{-1}(y) \cap G_t$$
and if $v \in \Phi^{-1}(y) \cap G_t$, then $\Phi^{-1}(y)=F_v$ and there is $\tilde v \in R_t$ such that $v = \varphi_{\xi}(\tilde v)$, so that $t = \pi(\tilde v)$ and $\dim \Phi^{-1}(y) \cap G_t = \dim F_v \cap G_{\pi(\tilde v)} \ge 1$. Since $\Phi^{-1}(y)=\P^k, k = 1, 2$, two cases are possible. If there is a $y \in \Phi(V)$ such that $\dim \Phi^{-1}(y) \cap G_t = 2$ then $\Phi^{-1}(y) \cap G_t = \Phi^{-1}(y) = \P^2$ and $\Sigma_t$ is either reducible or a double plane. Since $\Sigma_t \subset G_t = \P^3$ and $\Sigma_t \subset X$, we can apply the same method in the proof of Claim \ref{nome} and get a contradiction. Therefore $\Phi^{-1}(y) \cap G_t$ is a line for every $y \in \Phi(V)$ such that $\Phi^{-1}(y) \cap G_t \ne \emptyset$. Hence $\Sigma_t$ is covered by a family of disjoint lines, so that $\Sigma_t$ is smooth. Moreover note that a general fiber $F_v, v \in V$ cannot be a plane, for otherwise the argument above would show that $\dim_v F_v \cap G_{\pi(\tilde v)} \ge 2$ and then we would have an $y \in \Phi(V)$ such that $\dim \Phi^{-1}(y) \cap G_t = 2$, a contradiction. Therefore a general $F_v$ is a line and $\dim \Phi(V)=2$. Since $\widetilde \Phi$ and $\Phi$ have the same fibers, we get that $\dim({\widetilde \Phi}(V))=2$.
\end{proof}

\begin{subclaim}
\label{c4}
The case $S_V = S(0,0,1,3)$ does not occur. In the other cases we have that $V = \varphi_{\O_{\P(\F)}(1)}(\P(\F))$, where $\F = \O_{\mathbb F_0}(f) \oplus \O_{\mathbb F_0}(2C_0+f)$ if $S_V = S(0,0,2,2)$, while $\F = \O_{\mathbb F_1}(C_0+f) \oplus \O_{\mathbb F_1}(C_0+2f)$ if $S_V = S(0,1,1,2)$.
\end{subclaim}
\begin{proof}
Let $S_V$ be $S(0,0,1,3), S(0,0,2,2)$ or $S(0,1,1,2)$ and let $Y$ be respectively $S(1,3), S(2,2)$ or $S(1,2)$. We have that $S_V \subset \P^7$, and, in the first two cases, $S_V$ is a cone with vertex the line $S_0$ over $Y \subset \P^5$, while in the third case $Y \subset \P^4$ and $\langle Y \rangle \cap M_1 = \emptyset$, where  $M_1 \subset V$ is the plane given in Subclaim \ref{c3}. Set $N= S_0$ in the first two cases and $N=M_1$ in the third case. Let $z \in Y$ and let $t \in \P^1$ be such that $z \in G_t$. Consider the plane $M_z = \langle z, S_0 \rangle$ in the first two cases and $M_z = \langle z, L_1 \rangle$ in the third case, where $L_1 = M_1 \cap G_t$. Since $M_z \subset G_t$, we get by Subclaim \ref{smq} that $M_z \cap \Sigma_t$ is the union of $S_0$ (or $L_1$) and a line $L_z$, meeting $S_0$ (or $L_1$, hence also $M_1$) in a point. This defines a morphism $Y \to \mathbb G(1,7)$ which in turn gives rise to a rank $2$ globally generated vector bundle $\F$ on $Y$ such that 
$$V = \bigcup\limits_{z \in Y} L_z = \varphi_{\xi_{\F}}(\P(\F)).$$ 
Moreover note that there is a unique such line passing through the general point of $V$, for the lines $L_z$ are the fibers of the projection $\pi_N : V \dashrightarrow Y$. In particular $\varphi_{\xi_{\F}}$ is birational and we find that
\begin{equation}
\label{conti}
h^0(\xi_{\F})=8 \ \hbox{and} \ \xi_{\F}^3= \deg V = 6.
\end{equation}
Also, since the lines $L_z$ meet $N$ in a point, it follows that $N$ gives rise to a section $\P(\L_0) \subset \P(\F)$ of $\P(\F) \to Y$, where $\L_0$ is a line bundle quotient of $\F$. Hence $\L_0$ is globally generated and defines a morphism $\varphi_{\L_0} : Y \to N$. Thus we have, for some integers $a$ and $b$, an exact sequence
\begin{equation}
\label{seq}
0 \to \O_Y(aC_0+bf) \to \F \to \L_0 \to 0.
\end{equation}
Now, when $S_V = S(0,0,2,2)$ we have that $Y \cong {\mathbb F_0}$ and $\L_0$ must be $\O_{\mathbb F_0}(f)$.
By \eqref{conti} we get that $a \ge 0$ and $b \ge 0$, for otherwise \eqref{seq} gives that $8 = h^0(\xi_{\F})= h^0(\F) \le 2$. Hence $H^1(\O_{\mathbb F_0}(aC_0+bf))=0$ and \eqref{seq} gives that
$$8 = h^0(\F) = (a+1)(b+1)+2.$$ 
Moreover, using the well-known fact that $\xi_{\F}^3 = c_1(\F)^2-c_2(\F)$, we get from \eqref{seq} that 
$$a(2b+1)=6$$ 
and it follows that $a=2, b=1$. Since $\Ext^1(\O_{\mathbb F_0}(f),\O_{\mathbb F_0}(2C_0+f)) \cong H^1(\O_{\mathbb F_0}(2C_0))=0$, we get that \eqref{seq} splits and $\F \cong \O_{\mathbb F_0}(f) \oplus \O_{\mathbb F_0}(2C_0+f)$.

Next we exclude the case $S_V = S(0,0,1,3)$. We have that $Y \cong {\mathbb F_2}$ and it is easily seen that $\L_0 \cong \O_{\mathbb F_2}(f)$. Then \eqref{seq} gives an exact sequence
$$0 \to \O_f(a) \to \F_{|f} \to \O_f \to 0$$
with $a = c_1(\F) \cdot f \ge 0$, since $\F$ is globally generated. But then the sequence splits and we find that 
$\F_{|f} \cong \O_{\P^1} \oplus \O_{\P^1}(a)$. On the other hand if we let $G_t = \langle f, S_0 \rangle$ we see that the quadric $\Sigma_t = \varphi_{\xi_{\F_{|f}}}(\P(\F_{|f}))$ is a cone, contradicting Subclaim \ref{smq}. Thus this case does not occur.

Finally assume that $S_V=S(0,1,1,2)$. Then $Y \cong {\mathbb F_1}$ and it is easily seen that $\L_0$ must be $\O_{\mathbb F_1}(C_0+f)$. By \eqref{conti} we get that $a \ge 0$ and $b \ge 0$, for otherwise $8 = h^0(\xi_{\F})= h^0(\F) \le 3$. Now $\xi_{\F}^3 = c_1(\F)^2-c_2(\F)$ becomes $a^2-2ab-b+5=0$, that can be rewritten as
$$(2a+1)(4b-2a+1)=21.$$
The possible integer solutions are $(a, b)= (0, 5), (1, 2), (3, 2)$ and $(10, 5)$. As it is easily seen, the cases $(0, 5)$ and $(3, 2)$ have $H^1(\O_{\mathbb F_1}(aC_0+bf))=0$ and $h^0(\O_{\mathbb F_1}(aC_0+bf)) \ne 5$, while the case $(10, 5)$ has $h^0(\O_{\mathbb F_1}(10C_0+5f)) > 8$, so they all contradict \eqref{seq}. Thus we have that $a=1, b=2$. Since $\Ext^1(\O_{\mathbb F_1}(C_0+f),\O_{\mathbb F_1}(C_0+2f)) \cong H^1(\O_{\mathbb F_1}(f))=0$, we get that \eqref{seq} splits and $\F = \O_{\mathbb F_1}(C_0+f) \oplus \O_{\mathbb F_1}(C_0+2f)$.
\end{proof}

\begin{subclaim}
\label{c5}
Let $\mathcal Q \subset \P^3$ be the quadric cone. In the case $S_V = S(0,0,2,2)$, we have that $V \cong \P^1 \times \mathcal Q \subset \P^7$ embedded by $\O_{\P^1}(1) \boxtimes \O_{\mathcal Q}(1)$.
\end{subclaim}
\begin{proof}
By Subclaim \ref{c4} we have that $V = \varphi_{\xi_{\F}}(\P(\F))$, where $\F = \O_{\mathbb F_0}(f) \oplus \O_{\mathbb F_0}(2C_0+f)$. Let us first consider the exceptional locus ${\rm Exc}(\varphi_{\xi_{\F}})$ of $\varphi_{\xi_{\F}}$. We claim that
\begin{equation}
\label{luogoecc}
{\rm Exc}(\varphi_{\xi_{\F}})=\P(\O_{\mathbb F_0}(f)).
\end{equation}
Let $p : \P(\F) \to \mathbb F_0$ be the projection map. From Grothendieck's relation 
$$\xi_{\F}^2 = \xi_{\F} p^*c_1(\F) - p^*c_2(\F)$$ 
we deduce, setting $R = p^*(C_0) p^*(f)$, that
\begin{equation}
\label{gro}
\xi_{\F}^2 = 2 \xi_{\F}p^*C_0  + 2\xi_{\F} p^*f - 2R
\end{equation}
and therefore that $\xi_{\F}^2p^*C_0 = \xi_{\F}^2p^*f = 2$. Also \eqref{gro} gives that $N^2(\P(\F))$ is generated by $\xi_{\F}p^*C_0, \xi_{\F}p^*f$ and $R$. Let $C \subset \P(\F)$ be an irreducible curve contracted by $\varphi_{\xi_{\F}}$. Then $C \equiv a\xi_{\F}p^*C_0+b\xi_{\F}p^*f+cR$ for some integers $a,b,c$. Since $p^*C_0$ and $p^*f$ are nef, we get that $0 \le C \cdot p^*C_0 = b$ and $0 \le C \cdot p^*f = a$. Now
$$0 = \xi_{\F} \cdot C = 2a+2b+c$$
gives that $c = -2a-2b$, hence either $a>0$ or $b>0$. Also $\P(\O_{\mathbb F_0}(f)) \sim \xi_{\F} -2p^*C_0-p^*f$ and therefore
$$C \cdot \P(\O_{\mathbb F_0}(f)) = [a\xi_{\F}p^*C_0+b\xi_{\F}p^*f-(2a+2b)R] \cdot (\xi_{\F} -2p^*C_0-p^*f)=-a-2b < 0$$
so that $C \subset \P(\O_{\mathbb F_0}(f))$. On the other hand $H^0(\F) \cong H^0(\xi_{\F}) \to H^0(\xi_{\O_{\mathbb F_0}(f)}) \cong H^0(\O_{\mathbb F_0}(f))$ is surjective, hence $\varphi_{\xi_{\F}}$ restricts to $\varphi_{\O_{\mathbb F_0}(f)}$ on $\P(\O_{\mathbb F_0}(f)) \cong \mathbb F_0$, a morphism that contracts all curves $C \sim f$. Since $V$ is  normal by Subclaim \ref{normale}, this proves \eqref{luogoecc} and moreover the proof of Subclaim \ref{c4} shows that  $\P(\O_{\mathbb F_0}(f))$ is contracted to the line $S_0$. Now for every $t \in \P^1$ let $f_t$ be the corresponding line on $\P(\O_{\mathbb F_0}(f)) \cong \mathbb F_0$. We have an exact sequence
$$0 \to \F(-f_t) \to \F \to \F_{|f_t} \to 0$$
with $H^1(\F(-f_t)) = H^1(\O_{\mathbb F_0} \oplus \O_{\mathbb F_0}(2C_0))=0$, showing that $H^0(\F) \cong H^0(\xi_{\F}) \to H^0(\xi_{\F_{|f_t}}) \cong H^0(\F_{|f_t})$ is surjective. Since $\F_{|f_t} \cong \O_{\P^1} \oplus \O_{\P^1}(2)$ we deduce that $\varphi_{\xi_{\F}}$ maps $\P(\F_{|f_t})$ onto a quadric cone $\mathcal Q_t \subset \P^3$. On the other hand $\P(\F_{|f_t}) \cap \P(\O_{\mathbb F_0}(f))$ is a curve on $\P(\O_{\mathbb F_0}(f))$ isomorphic to $f_t \subset \mathbb F_0$ and this curve is contracted to the point in $S_0$ corresponding to $t$. Now any point $v \in V \setminus S_0$ belongs to a unique cone $\mathcal Q_t$. On the other hand, if $v \in S_0$ then $v$ is the vertex of the cone $\mathcal Q_t$ where $t \in \P^1$ is the image of $f_t$ on $\P(\O_{\mathbb F_0}(f))$. This clearly gives an isomorphism $V \cong \P^1 \times \mathcal Q$.
\end{proof}
 
\begin{subclaim}
\label{c5bis}
If $S_V = S(0,1,1,2)$, then $\rho(V)=2$ and in fact $V$ is a hyperplane section of the Segre embedding $\P^2 \times \P^2 \subset \P^8$.
\end{subclaim}
\begin{proof}
By Subclaim \ref{c4} we have that $V = \varphi_{\xi_{\F}}(\P(\F))$, where $\F = \O_{\mathbb F_1}(C_0+f) \oplus \O_{\mathbb F_1}(C_0+2f)$. Consider the isomorphism $\P(\O_{\mathbb F_1}(C_0+f)) \cong \mathbb F_1$ and let $\widetilde C_0$ be the curve on $\P(\O_{\mathbb F_1}(C_0+f))$ isomorphic to $C_0 \subset \mathbb F_1$. We first claim that $\widetilde C_0$ is the unique curve contracted by $\varphi_{\xi_{\F}}$. To see the latter, let $p : \P(\F) \to \mathbb F_1$ be the projection map. From Grothendieck's relation 
$$\xi_{\F}^2 = \xi_{\F} p^*c_1(\F) - p^*c_2(\F)$$ 
we deduce, setting $R = p^*(C_0) p^*(f)$, that
\begin{equation}
\label{gro2}
\xi_{\F}^2 = 2 \xi_{\F}p^*C_0  + 3\xi_{\F} p^*f - 2R
\end{equation}
and therefore that $\xi_{\F}^2p^*C_0 = 1, \xi_{\F}^2p^*f = 2$. Also \eqref{gro2} gives that $N^2(\P(\F))$ is generated by $\xi_{\F}p^*C_0, \xi_{\F}p^*f$ and $R$. Let $C \subset \P(\F)$ be an irreducible curve contracted by $\varphi_{\xi_{\F}}$. Then $C \equiv a\xi_{\F}p^*C_0+b\xi_{\F}p^*f+cR$ for some integers $a,b,c$. Since $p^*(C_0+f)$ and $p^*f$ are nef, we get that $0 \le C \cdot p^*(C_0+f) = b$ and $0 \le C \cdot p^*f = a$. Now
$$0 = \xi_{\F} \cdot C = a+2b+c$$
gives that $c = -a-2b$, hence either $a>0$ or $b>0$. Also $\P(\O_{\mathbb F_1}(C_0+f)) \sim \xi_{\F} -p^*C_0-2p^*f$ and therefore
$$C \cdot \P(\O_{\mathbb F_1}(C_0+f)) = [a\xi_{\F}p^*C_0+b\xi_{\F}p^*f-(a+2b)R] \cdot (\xi_{\F} -p^*C_0-2p^*f)=-a-b < 0$$
so that $C \subset \P(\O_{\mathbb F_1}(C_0+f))$. On the other hand, via the isomorphism $\P(\O_{\mathbb F_1}(C_0+f)) \cong \mathbb F_1$ we have that 
$$(\xi_{\F})_{|\P(\O_{\mathbb F_1}(C_0+f))} = \xi_{\O_{\mathbb F_1}(C_0+f)} \cong C_0+f$$
so that
$$0 = \xi_{\F} \cdot C = (C_0+f) \cdot p(C)$$
and therefore $p(C)=C_0$, hence $C=\widetilde C_0$. This proves the above claim.

Since $V$ is normal by Subclaim \ref{normale}, we have that ${\rm Exc}(\varphi_{\xi_{\F}})=\widetilde C_0$ and therefore $\varphi_{\xi_{\F}}$ is an isomorphism outside $\widetilde C_0$ and contracts the latter to a singular point of $V$, namely to $S_0$. It follows that $(\varphi_{\xi_{\F}})_* \O_{\P(\F)} \cong \O_V$ and therefore $(\varphi_{\xi_{\F}})_* \O_{\P(\F)}^* \cong \O_V^*$. Moreover $R^1 (\varphi_{\xi_{\F}})_* \O_{\P(\F)}^*$ is a skyscraper sheaf supported on the point $S_0$ and on $S_0$ it is $H^1(\widetilde C_0, \O_{\widetilde C_0}^*)$, so that 
$$H^0(V,R^1 (\varphi_{\xi_{\F}})_* \O_{\P(\F)}^*) \cong H^1(\widetilde C_0, \O_{\widetilde C_0}^*) \cong H^1(\P^1, \O_{\P^1}^*) \cong \Z.$$ 
Now the Leray spectral sequence gives rise to the exact sequence
$$0 \to H^1(V, \O_V^*) \to H^1(\P(\F), \O_{\P(\F)}^*) \to H^0(V, R^1 (\varphi_{\xi_{\F}})_* \O_{\P(\F)}^*) \to 0.$$
Since $H^1(\P(\F), \O_{\P(\F)}^*) \cong \Z^3$, we deduce that $\Pic(V)$ has rank $2$.

Now let us see that $V$ is a hyperplane section of the Segre embedding $\P^2 \times \P^2 \subset \P^8$. Note that this also proves, by Lefschetz's theorem, that $\rho(V)=2$.

Under the morphism $\varphi_{\xi_{\F}}$ we see that $\P(\O_{\mathbb F_1}(C_0+f))$ gets mapped onto $\P^2$ with $\widetilde C_0$ contracted to a point $P \in \P^2$, while $\P(\O_{\mathbb F_1}(C_0+2f))$ gets mapped isomorphically onto the rational normal surface scroll $S(1,2) \subset \P^4$. Moreover the fibers of $p$ not meeting $\widetilde C_0$, give rise to a family of disjoint lines meeting $S(1,2)$ and $\P^2$ in a point and giving an isomorphism $S(1,2) \setminus S(1) \cong \P^2-\{P\}$, while the fibers meeting $\widetilde C_0$, give rise to lines meeting $S(1)$ and passing through $P$. One can put coordinates so that $V$, that is the union of these lines, is a hyperplane section of the Segre embedding $\P^2 \times \P^2 \subset \P^8$. To see this let $P=(1:0:0) \in \P^2$, consider the line $(0:s:t)$ and parametrize the lines through $P$ with coordinates $(a:b)$, so that a point in $\P^2$ has coordinates $(a:bs:bt)$. Parametrize the points of $S(1,2)$, join of the line $(s:0:0:t:0:0)$ and the conic $(0:s^2:st:0:st:t^2)$, by $(as:bs^2:bst:at:bst:bt^2)$, inside the hyperplane $Z_2-Z_4=0$ in $\P^5$. 
Now a point in a line joining a point of $S(1,2)$ and of $\P^2$ has coordinates $(ma:mbs:mbt:nas:nbs^2:nbst:nat:nbst:nbt^2)$. It follows that $V$, that is the locus of these points, is a hyperplane section of the Segre embedding $\P^2 \times \P^2 \subset \P^8$: if we have coordinates $(X_0:X_1:X_2)$ and $(Y_0:Y_1:Y_2)$ this can be seen by setting $m=X_0, ns=X_1, nt=X_2, a=Y_0, bs=Y_1$ and $bt=Y_2$.
\end{proof}

\begin{subclaim}
\label{c6}
If $S_V=S(0,1,1,2)$ then $f_x \cong \P(T_{\P^2})$, while if $S_V = S(0,0,2,2)$ then $f_x \cong \P^1 \times \P^1 \times \P^1$.
\end{subclaim}
\begin{proof}
Let $V=f_u$ be such that $S_V=S(0,1,1,2)$. Then $V = \varphi_{\O_{\P(\F)}(1)}(\P(\F))$ by Subclaim \ref{c4} and therefore $\rho(V)=2$ by Subclaim \ref{c5bis}. Also the only singular point of $V$ is an ordinary double point by \cite[Thm.~2.9]{fu4}, hence in particular is terminal. It follows by \cite[Thm.~1.4]{jr} that $\rho(f_x)=2$ and therefore that $f_x \cong \P(T_{\P^2})$. 

Assume now that $S_V = S(0,0,2,2)$ so that we know by Subclaim \ref{c5} that  $V = \P^1 \times \mathcal Q \subset \P^7$. Moreover $T:= {\widetilde \Phi}(V)$ is a surface by Subclaim \ref{smq}. We now show that $T \cong \mathcal Q$. In fact, let $U$ be the open subset of $\widetilde{\Phi(X)}$ such that the fibers of $\widetilde \Phi$ have dimension $1$. For every $v \in V$ we know that $F_v \subset f_v=f_u=V$, hence $F_v$ is a line, since $\P^1 \times \mathcal Q$ does not contain planes. Then $V \subset \widetilde \Phi^{-1}(U)$ and, similarly, $f_x \subset \widetilde \Phi^{-1}(U)$. Now there is a rank $2$ vector bundle $\G$ on $U$ such that $\widetilde \Phi^{-1}(U) \cong \P(\G)$ and $\O_X(1)_{|\P(\G)} \cong \O_{\P(\G)}(1)$. Therefore $V \cong \P(\G_{|T})$ and, since $V$ is normal by Subclaim \ref{normale}, so must be $T$.  But it is easily seen that $\P^1 \times \mathcal Q$ has only one $\P^1$-bundle structure over a normal surface, namely the second projection $\P^1 \times \mathcal Q \to \mathcal Q$ and therefore $T \cong \mathcal Q$. Now \eqref{diag} gives rise to the commutative diagram
$$\xymatrix{\widetilde \Phi^{-1}(U) \cong \P(\G) \ar[dr]_{{\phi_{\tau}}_{|\P(\G)}} \ar[r]^{\hskip 1cm \widetilde \Phi_{|\P(\G)}} & U \ar[d]^{g_{|U}} \\ & X'}.$$
In particular $g_{|U}$ gives a deformation of $\mathcal Q$ to $T_x:= \widetilde \Phi(f_x)$. Also we know that there is an ample line bundle $\L$ on $X'$ such that $K_X+2H = \phi_{\tau}^*\L$ and therefore, restricting to $\widetilde \Phi^{-1}(U) \cong \P(\G)$ we get, setting $p=\widetilde \Phi_{|\P(\G)}$, that $p^*((g_{|U})^*\L) =  
p^*(K_U + \det \G)$, so that $K_U + \det \G = (g_{|U})^*\L$. In particular this shows that both $K_{\mathcal Q}$ and $K_{T_x}$ are restrictions of a line bundle on $U$. But then $K_{T_x}^2 = K_{\mathcal Q}^2=8$. Since $\P^1 \times \P^1 \times \P^1$ and $\P(T_{\P^2})$ have as only $\P^1$-bundle structures over a smooth surface the projections to $\P^1 \times \P^1$ or to $\P^2$, we have that $T_x$ is either $\P^1 \times \P^1$ when $f_x \cong \P^1 \times \P^1 \times \P^1$ or $\P^2$ when $f_x \cong \P(T_{\P^2})$. Thus $f_x \cong \P^1 \times \P^1 \times \P^1$. 
\end{proof}
To finish the proof of Claim \ref{dp6} observe that we already know the smooth fibers of $\phi_{\tau}$ by Claim \ref{liscie}. On the other hand, if $V=f_u$ is a singular fiber, Subclaims \ref{c1}, \ref{c4} and Subclaim \ref{c6} imply that all singular fibers are either all of type $S(0,0,2,2)$ or all of type $S(0,1,1,2)$. Hence, when $f_x \cong \P^1 \times \P^1 \times \P^1$ we get that all singular fibers are $\P^1 \times \mathcal Q$ by Subclaims \ref{c6} and \ref{c5}. In particular $F_u$ is a line for every $u \in X$ and it follows by Lemma \ref{ii} that $(X, \O_X(1), \E)$ is a linear Ulrich triple with $p = \widetilde \Phi$ and $b = 3$. This gives case (vi2). Finally if $f_x \cong \P(T_{\P^2})$ then we are in case (vi3) by Subclaim \ref{c4}. This completes the proof of Claim \ref{dp6}.
\end{proof}

\begin{claim} 
\label{nome}
In case (d.3), $(X,\O_X(1),\E)$ is as in (vii).
\end{claim}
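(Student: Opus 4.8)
The plan is to run the Dichotomy Lemma \ref{dico} for $\phi_{\tau}$ and then promote the resulting structure to a linear Ulrich triple via Lemma \ref{ii}. First, by Claim \ref{fe2} case {\rm (fin)} cannot hold for $\phi_{\tau}$, so we are in case {\rm (fact)}: $\phi_{\tau}$ factorizes through $\widetilde\Phi$, we have $F_u \subseteq f_u$ for every $u$, and every fiber $f_u$ is a disjoint union of linear spaces, namely fibers of $\Phi$. Writing $\phi_{\tau} = h \circ \widetilde\Phi$ with $h : B \to X'$ and $B = \widetilde{\Phi(X)}$, this already yields the two factorization assertions of case (vii). Since $X$ is a quadric fibration with equidimensional fibers over the smooth surface $X'$, the general fiber $f_x$ is a smooth quadric surface, and $F_x \subseteq f_x$ is a positive-dimensional linear space contained in the span $\P^3 = \langle f_x \rangle$; as a smooth quadric surface contains no plane, $F_x$ is a line for general $x$. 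Hence $\dim B = 3$ and $\phi(\E) = 1$.

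The heart of the proof is to upgrade this to the statement that $F_u$ is a line for \emph{every} $u \in X$, i.e. that $\widetilde\Phi$ is equidimensional. Suppose not; then there is $u_0$ with $F_{u_0} = M \cong \P^2$. Since $M = F_{u_0} \subseteq f_{u_0}$ and $\dim f_{u_0} = 2$, the quadric surface $f_{u_0}$ contains the plane $M$, so $f_{u_0}$ has rank $\le 2$ (a rank-$3$ cone contains no plane). Put $b_0 = \widetilde\Phi(u_0)$ and $x'_0 = \phi_{\tau}(u_0)$, and recall that $M = \widetilde\Phi^{-1}(b_0)$ and $h^{-1}(x'_0) = \widetilde\Phi(f_{u_0})$. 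If $f_{u_0}$ is a double plane (supported on $M$), then $\widetilde\Phi(f_{u_0}) = \{b_0\}$ is a point; but $h : B \to X'$ is a surjective morphism from a threefold to a surface, so every fiber of $h$ has dimension $\ge 1$, a contradiction. If instead $f_{u_0} = M \cup M'$ with $M'$ a second plane and $\ell = M \cap M'$ a line, choose $q \in M' \setminus \ell$: then $F_q \subseteq f_{u_0}$ is a positive-dimensional linear space through $q$, which forces $F_q \subseteq M'$, so $F_q$ meets $\ell \subseteq M = F_{u_0}$; since $q \notin M$ gives $F_q \ne F_{u_0}$, this contradicts the disjointness of distinct fibers of $\Phi$. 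In either case we reach a contradiction, so every $F_u$ is a line.

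Finally, $\widetilde\Phi$ has equidimensional fibers (all lines) and $(X,\O_X(1)) \ne (\P^4,\O_{\P^4}(1))$, so Lemma \ref{ii} gives that $(X,\O_X(1),\E)$ is a linear Ulrich triple with $p = \widetilde\Phi$, $B = \widetilde{\Phi(X)}$ and $b = n - \phi(\E) = 3$. Combined with the factorization of $\phi_{\tau}$ through $\widetilde\Phi$ and the description of its fibers coming from case {\rm (fact)}, this is exactly case (vii). The main obstacle is the exclusion of a plane fiber $F_{u_0} = \P^2$: this is where smoothness of $X$ is not enough and the two degenerations of $f_{u_0}$ must be treated separately, the double plane by the fiber-dimension estimate for $h : B \to X'$ and the pair of planes by the disjointness of the fibers of $\Phi$.
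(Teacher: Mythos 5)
Your proof follows the same skeleton as the paper's: Claim \ref{fe2} puts you in case (fact) of the Dichotomy Lemma \ref{dico}, one then excludes a two-dimensional fiber $F_{u_0}=\P^2$ of $\Phi$, and Lemma \ref{ii} finishes. Within that skeleton, your treatment of the double-plane degeneration is genuinely different from, and simpler than, the paper's: the paper uses the Bayer--Eisenbud ribbon sequence $0 \to \O_{\P^2}(-1) \to \O_{f_{u_0}}^* \to \O_{F_{u_0}}^* \to 1$ to show that $\Pic(f_{u_0})$ injects into $\Pic(F_{u_0})$, deduces $(\det \E)_{|f_{u_0}} \cong \O_{f_{u_0}}$, hence that $\Phi(f_{u_0})$ is a point, and contradicts the fact that scheme-theoretic fibers of $\Phi$ are reduced linear spaces by \cite[Thm.~2]{ls}; you instead note that $h^{-1}(\phi_{\tau}(u_0)) = \widetilde\Phi(f_{u_0})$ would be a single point, against the fact that every fiber of the surjective morphism $h$ from the threefold $B=\widetilde{\Phi(X)}$ to the surface $X'$ has dimension at least $1$. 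That argument is correct -- it uses only that the set-theoretic support of $f_{u_0}$ is $M$, together with $\dim B = 3$, which you had established from the general fiber -- and it bypasses the ribbon computation entirely.

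There is, however, a genuine gap earlier: you assert that $f_{u_0}$ is a ``quadric surface'' of rank $\le 2$, i.e.\ either a double plane or two planes meeting in a \emph{line}. That every fiber of $\phi_{\tau}$ is a quadric spanning a $\P^3$ is precisely the point the paper takes care to prove and you do not: a priori a degenerate fiber of the adjunction-theoretic quadric fibration containing the plane $M$ could be, say, a union of two planes meeting in a single point (hence spanning a $\P^4$). Your two-planes argument fails in that situation, since a line $F_q \subset M'$ need not pass through the point $M \cap M'$, so disjointness of the fibers of $\Phi$ gives no contradiction. The paper excludes this by observing that a general hyperplane section of $X$ would then yield a conic fibration having a fiber equal to two disjoint lines, of arithmetic genus $-1$ instead of $0$; equivalently, $\phi_{\tau}$ is flat by miracle flatness (smooth total space and base, equidimensional fibers), so all fibers have the Hilbert polynomial $(t+1)^2$ of a quadric surface, which rules out planes meeting in a point. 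You must supply such an argument, or patch the point-intersection case directly: the fibers $F_q$ for $q \in M' \setminus M$ would be pairwise disjoint positive-dimensional linear subspaces of $M' \cong \P^2$ covering $M' \setminus (M \cap M')$, which is impossible since any two positive-dimensional linear subspaces of $\P^2$ meet. With that step added, your proof is complete.
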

\begin{proof}
By Claim \ref{fe2} we are in case (fact) for $\phi_{\tau}$, hence $\phi_{\tau}$ factorizes through $\widetilde \Phi$. Suppose first that there is an $x_0 \in X$ such that $F_{x_0}$ is a linear $\P^k$ with $2 \le k \le 3$. Since $F_{x_0} \subset f_{x_0}$ and $\dim f_{x_0} = 2$, it follows that $\P^2 = F_{x_0} \subset f_{x_0}$, so that $f_{x_0}$ is a reducible quadric. If $f_{x_0} = F_{x_0} \cup M$ with $M$ a plane distinct from $F_{x_0}$, note that $f_{x_0}$ spans a $\P^3$: If not, then $F_{x_0} \cap M$ is a point but then a general hyperplane section of $X$ gives a conic fibration over a curve with a fiber union of two disjoint lines, contradicting the fact that all fibers must have arithmetic genus $0$. Now $\Phi_{|M} : M \to \mathbb G(r-1,\P H^0(\E))$ contracts the line $F_{x_0} \cap M$ to a point, hence it is constant, so that $\Phi(F_{x_0})=\Phi(M)$ and therefore $M \subset F_{x_0}$, a contradiction. Hence $f_{x_0}$ is a double plane with $(f_{x_0})_{red}=F_{x_0}$. Again $f_{x_0}$ spans a $\P^3$: In fact, a general hyperplane section of $X$ gives a conic fibration over a curve with a fiber which is a double line with arithmetic genus $0$. But such a double line spans a plane, hence $f_{x_0}$ spans a $\P^3$. Since $F_{x_0}$ is a fiber of $\Phi$ we have that $(\det \E)_{|F_{x_0}} \cong \O_{\P^2}$. Now consider the exact sequence (see for example \cite[Proof of Prop.~4.1]{be})
$$0 \to \O_{\P^2}(-1) \to \O_{f_{x_0}}^* \to \O_{F_{x_0}}^* \to 1.$$
Since $H^1(\O_{\P^2}(-1))=0$ we get that the restriction map 
$$\Pic(f_{x_0})=H^1( \O_{f_{x_0}}^*) \to H^1(\O_{F_{x_0}}^*)=\Pic(F_{x_0})$$ 
is injective, hence $(\det \E)_{|f_{x_0}} \cong \O_{f_{x_0}}$. Now $f_{x_0}$ is a quadric in $\P^3$, hence $h^0(\O_{f_{x_0}})=1$.
Therefore $\Phi(f_{x_0})$ is a point and this gives that, scheme-theoretically, $f_{x_0} = F_{x_0}$. But this contradicts \cite[Thm.~2]{ls}.
It follows that $\dim F_u=1$ for every $u \in X$ and then we can apply Lemma \ref{ii}. 
\end{proof}

\begin{claim} 
In case (d.4), $(X,\O_X(1),\E)$ is as in (viii).
\end{claim}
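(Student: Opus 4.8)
The plan is to force ourselves into case (fact) for $\phi_{\tau}$ and then read off the linear Ulrich triple structure by chaining together the results already proved. Recall that we are in Case (B), so $c_1(\E)^4=0$ and hence $F_u=\P^k$ with $k\ge 1$ for every $u\in X$. Since $(X,\O_X(1))$ is as in (d.4), the nef value morphism $\phi_{\tau}:X\to X'$ is a linear $\P^1$-bundle over the smooth threefold $X'$, so every fiber $f_u\cong\P^1$.

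First I would apply Claim \ref{fe2}, which tells us that case (fin) does not hold for $\phi_{\tau}$. The Dichotomy Lemma \ref{dico} then places us in case (fact): $F_u\subseteq f_u$ for every $u$, and $\phi_{\tau}$ factorizes through $\widetilde\Phi$.

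Next I would invoke Lemma \ref{fe}(iii). Each fiber $f_u\cong\P^1$ is integral with $\Pic(f_u)\cong\Z$, and $(\phi_{\tau})_*\O_X\cong\O_{X'}$ by the definition of the nef value morphism. Hence the second assertion of Lemma \ref{fe}(iii) applies verbatim and yields $\phi_{\tau}=\widetilde\Phi$. (Concretely, combining $F_u\subseteq f_u=\P^1$ with $\dim F_u\ge 1$ forces $F_u=f_u$ for every $u$, so the two morphisms, both having connected fibers, must coincide.)

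Finally, since $\widetilde\Phi=\phi_{\tau}$ is a $\P^1$-bundle, its fibers are equidimensional, and we have already excluded case (a), i.e. $(X,\O_X(1))\ne(\P^4,\O_{\P^4}(1))$. Lemma \ref{ii} then gives that $(X,\O_X(1),\E)$ is a linear Ulrich triple with $p=\widetilde\Phi$, $B=\widetilde{\Phi(X)}=X'$ and $b=n-\phi(\E)=4-1=3$. Together with $p=\phi_{\tau}=\widetilde\Phi$ this is exactly case (viii). The argument is essentially a matter of correctly assembling the previously established statements; the only point requiring any care is verifying the hypotheses of Lemma \ref{fe}(iii), namely the integrality and Picard rank of the fibers together with $(\phi_{\tau})_*\O_X\cong\O_{X'}$, and these are immediate here because $\phi_{\tau}$ is a genuine $\P^1$-bundle. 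I therefore expect no real obstacle in this case, in contrast with the Del Pezzo fibration cases (d.2) treated above.
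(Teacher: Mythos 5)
Your proof is correct and follows essentially the same route as the paper: Claim \ref{fe2} rules out case (fin), so case (fact) holds for $\phi_{\tau}$, whence $F_u=f_u\cong\P^1$ for every $u$ (forced by $\dim F_u\ge 1$), giving $\phi_{\tau}=\widetilde\Phi$, and Lemma \ref{ii} then yields the linear Ulrich triple with $b=3$. The paper's own argument is just a terser version of this chain; your extra verification of the hypotheses of Lemma \ref{fe}(iii) is sound but not strictly needed, since the dimension count alone already gives $F_u=f_u$.
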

\begin{proof}
By Claim \ref{fe2} we are in case (fact) for $\phi_{\tau}$, hence $F_u = f_u$ for every $u \in X$. Thus $\phi_{\tau} = \widetilde \Phi$ and we just apply Lemma \ref{ii}.
\end{proof}

\begin{claim} 
In case (d.5), $(X,\O_X(1),\E)$ is as in (ix).
\end{claim}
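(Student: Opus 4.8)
The plan is to read off almost the entire statement from the Dichotomy Lemma and then to isolate the single genuinely new point — that the factorizing map has degree $1$ — by a short fiber count.

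First I would invoke Claim \ref{fe2}, which rules out case (fin) for $\phi_{\tau}$ in case (d.5); hence the Dichotomy Lemma \ref{dico} places us in case (fact). By the very statement of (fact), $\phi_{\tau}$ factorizes through $\widetilde\Phi$, say $\phi_{\tau} = \eta \circ \widetilde\Phi$ for a morphism $\eta : \widetilde{\Phi(X)} \to X'$, and every fiber $f_x$ of $\phi_{\tau}$ is a disjoint union of linear spaces (fibers of $\Phi$). Combined with the hypothesis of case (d.5) that $(X,\O_X(1))$ is a scroll over a normal threefold with non-equidimensional fibers, this already yields every assertion of (ix) except that $\eta$ is generically finite of degree $1$.

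Next I would pin down the general fiber of $\phi_{\tau}$. Since $(X,\O_X(1))$ is a scroll over a threefold with $\tau = 2$, we have $K_X + 2H = \phi_{\tau}^*\L$ with $\L$ ample, and for general $x$ the fiber $f_x$ is a $\P^1$ on which $(K_X+2H)_{|f_x} \cong \O_{f_x}$, so adjunction gives $H \cdot f_x = 1$ and $f_x$ is a line; equivalently, $f_x$ is connected because $(\phi_{\tau})_*\O_X \cong \O_{X'}$, and being a disjoint union of linear spaces it is a single $\P^1$. On the other hand, since we are in case (fact) we have $F_x \subseteq f_x$, and $c_1(\E)^4 = 0$ forces $F_x = \P^k$ with $k \ge 1$. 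Hence $\P^k = F_x \subseteq f_x = \P^1$ for general $x$, forcing $F_x = f_x$; that is, $\widetilde\Phi$ and $\phi_{\tau}$ share the same general fibers.

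Finally I would deduce the degree. As $\dim \widetilde{\Phi(X)} = 4 - \dim F_x = 3 = \dim X'$ and $\eta$ is dominant, $\eta$ is generically finite. For general $x$, setting $x' = \phi_{\tau}(x)$, one has $\eta^{-1}(x') = \widetilde\Phi(\phi_{\tau}^{-1}(x')) = \widetilde\Phi(f_x) = \widetilde\Phi(F_x)$, which is the single point $\widetilde\Phi(x)$ because $F_x$ is a fiber of $\widetilde\Phi$. Thus $\eta$ is generically injective, hence of degree $1$ over $\C$, and $(X,\O_X(1),\E)$ is as in (ix). I expect no serious obstacle here, since the structural content is already packaged in Claim \ref{fe2} and the Dichotomy Lemma; the only step demanding care is this last one, where the coincidence $F_x = f_x$ of general fibers must be used to upgrade the generically finite $\eta$ to a degree-$1$ map, and the identity $\eta^{-1}(\phi_{\tau}(x)) = \widetilde\Phi(f_x)$ is what makes this transparent.
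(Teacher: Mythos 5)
Your proposal is correct and follows essentially the same route as the paper's proof: Claim \ref{fe2} puts you in case (fact) of the Dichotomy Lemma, the general scroll fiber $f_x$ is a line so that $F_x = f_x = \widetilde F_x$ for general $x$, and hence the factorizing map $\widetilde{\Phi(X)} \to X'$ is generically finite of degree $1$. The paper states this more tersely, while you spell out the identification $\eta^{-1}(\phi_\tau(x)) = \widetilde\Phi(f_x)$, but the mathematical content is identical.
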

\begin{proof}
By Claim \ref{fe2} we are in case (fact) for $\phi_{\tau}$, hence there is a morphism $\psi : \widetilde{\Phi(X)} \to X'$ such that $\phi_{\tau} = \psi \circ \widetilde \Phi$. Now $F_x$ is a line, hence $f_x = F_x = \widetilde F_x$. Therefore the general fiber of $\psi$ must be a point, that is $\psi$ is generically finite of degree $1$. 
\end{proof} 

\begin{claim} 
Case (e) does not occur.
\end{claim}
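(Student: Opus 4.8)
The plan is to reduce Case (e) directly to Proposition \ref{caso(e)}, which already rules out a single-point blow-up. Since we are in Case (B), we have $c_1(\E)^4 = 0$, and of course $n = 4 \ge 3$, so the hypotheses of Proposition \ref{caso(e)} are met once we exhibit $X$ as a blow-up of a smooth fourfold at one point with the prescribed restriction of $\O_X(1)$ to the exceptional divisor.

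In Case (e) the nef value morphism $\phi_{\tau} : X \to X'$ is the blow-up of a smooth fourfold $X'$ at $t \ge 1$ distinct points $p_1, \dots, p_t$, with exceptional divisors $E_i \cong \P^3$ and $\O_X(1)_{|E_i} \cong \O_{\P^3}(1)$. Since $t \ge 1$, the first step is to let $X_1$ be the blow-up of $X'$ at the remaining points $p_2, \dots, p_t$; as $X'$ is smooth and the centers are distinct, $X_1$ is again a smooth fourfold, and the induced morphism $h : X \to X_1$ is precisely the blow-up of $X_1$ at the single point lying over $p_1$, with exceptional divisor $E_1 \cong \P^3$ satisfying $\O_X(1)_{|E_1} \cong \O_{\P^3}(1)$. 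Applying Proposition \ref{caso(e)} (with $n = 4$) to this $h$ then yields a contradiction, so Case (e) cannot occur.

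There is essentially no obstacle here: the only point requiring a routine verification is that blowing up the $t$ points one at a time produces a smooth intermediate fourfold $X_1$ to which the proposition applies, and this is immediate because the centers are distinct reduced points on a smooth variety, so the blow-up at $p_2,\dots,p_t$ is an isomorphism near $p_1$. In particular, when $t = 1$ the reduction is vacuous and Proposition \ref{caso(e)} applies verbatim.
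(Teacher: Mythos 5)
Your proof is correct and is essentially the paper's own argument: the paper disposes of case (e) by simply invoking Proposition \ref{caso(e)}, which is exactly your reduction. The only difference is that you spell out the routine step of factoring the blow-up at $t \ge 1$ points through the intermediate smooth fourfold $X_1$ so that $X \to X_1$ is a one-point blow-up, a detail the paper leaves implicit.
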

\begin{proof}
This follows by Proposition \ref{caso(e)}. 
\end{proof}

We are therefore left with case (d.1) in which $(X,\O_X(1))$ is a Mukai fourfold, that is $K_X = -2H$. We will use Mukai's classification \cite[Thm.~7]{mu}, \cite[Table 0.3]{wi1}. 

We remark that we can assume that $c_1(\E)^3 \ne 0$, for otherwise \cite[Cor.~4]{ls} implies that $(X,\O_X(1))$ is a linear $\P^{4-b}$-bundle over a smooth variety of dimension $b \le 2$, contradicting $K_X = -2H$.

We will divide the proof in two subcases of case (d.1):
\begin{itemize}
\item [(B.1)] there exists $x_0 \in X$ such that $\dim F_{x_0} \ge 2$.
\item [(B.2)] $\dim F_u = 1$ for every $u \in X$.
\end{itemize} 

In the second case we have the following general fact.

\begin{claim}
\label{casoii-muk}
In case (B.2) we have that $(X,\O_X(1),\E)$ is a linear Ulrich triple with $p = \widetilde \Phi, B=\widetilde{\Phi(X)}$ and $b = 3$. Moreover the $\P^1$-bundle structure $p: X \cong \P(\F) \to B$ occurs in the following cases:
\begin{itemize}
\item [(i)] $X= \P^1 \times M$ and either $\F \cong L^{\oplus 2}$ where $K_M=-2L$, or $B=\P^1 \times \P^2$ and either $M=V_7$ and $\F=\O_{\P^1}(1)\boxtimes (\O_{\P^2}(1) \oplus \O_{\P^2}(2))$ or $M=\P(T_{\P^2})$ and $\F=\O_{\P^1}(1)\boxtimes T_{\P^2}$. 
\item [(ii)] $(X, \O_X(1))$ is as in Examples 6, 8 or 9 in Mukai's classification and the $\P^1$-bundle structure is the one given in \cite[Thm.~7 and Ex.~1-9]{mu} (see also \cite[Table 0.3]{wi1}).
\end{itemize}
\end{claim}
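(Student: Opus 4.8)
The first assertion is immediate. Since we are in case (B.2), every fiber $F_u$ is a line, and by Lemma \ref{not2} we have $\widetilde F_u = F_u$, so all fibers of $\widetilde \Phi$ have dimension $1$ and $\widetilde\Phi$ is equidimensional. As $(X,\O_X(1)) \ne (\P^4, \O_{\P^4}(1))$, Lemma \ref{ii} applies and yields that $(X,\O_X(1),\E)$ is a linear Ulrich triple with $p = \widetilde\Phi$, $B = \widetilde{\Phi(X)}$ and $b = 4 - \phi(\E) = 3$, since $\phi(\E)=1$. In particular $X \cong \P(\F)$ for a very ample rank $2$ vector bundle $\F$ on the smooth threefold $B$, with $H = \O_{\P(\F)}(1) =: \xi$. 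From the canonical bundle formula $K_X = -2\xi + p^*(K_B + \det \F)$ and $K_X = -2H = -2\xi$ we obtain $p^*(K_B+\det\F) = \O_X$, and since $p^*$ is injective on Picard groups, $\det \F \cong -K_B$. As $\F$ is very ample, $-K_B = \det\F$ is ample, so $B$ is a Fano threefold.

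It remains to identify the $\P^1$-bundle $p : X \cong \P(\F) \to B$. Now $X = \P(\F)$ is a Mukai fourfold (a Fano fourfold of index $2$) with $\rho(X) = \rho(B)+1 \ge 2$, carrying a $\P^1$-bundle structure onto a threefold. The plan is to go through Mukai's classification \cite[Thm.~7]{mu}, \cite[Table 0.3]{wi1} of Mukai fourfolds with $\rho \ge 2$ and single out those admitting such a structure, distinguishing the cases in which $X$ is a product $\P^1 \times M$ from the remaining ones. In the non-product case, comparison with Mukai's list shows that $(X,\O_X(1))$ is one of the Examples $6$, $8$, $9$ of \cite{mu}, with $p$ the $\P^1$-bundle structure described there; this gives case (ii).

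In the product case $X = \P^1 \times M$, the constraint $\det\F = -K_B$ forces $M$ to be a del Pezzo threefold, i.e. $-K_M = 2L$ for an ample $L$ with $H = \O_{\P^1}(1) \boxtimes L$. I would then enumerate the $\P^1$-bundle structures on $\P^1 \times M$ onto a threefold. One such structure is the projection $\P^1 \times M \to M$, which realises $X = \P(L^{\oplus 2})$ over $B = M$; this is the first alternative of (i). Any other such structure must arise, as $\operatorname{id}_{\P^1}$ times a morphism, from a $\P^1$-bundle structure $M \to S$ over a surface $S$; using the classification of del Pezzo threefolds, the only ones so fibered (over $S = \P^2$) are $M = V_7$ and $M = \P(T_{\P^2})$, giving $B = \P^1 \times \P^2$. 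A direct computation of $\F$ from $\xi = H$ and the bundles $\O_{\P^2}(1)\oplus\O_{\P^2}(2)$ (for $V_7$) and $T_{\P^2}$ (for $\P(T_{\P^2})$), together with the check $\det\F = -K_{\P^1\times\P^2} = \O_{\P^1}(2)\boxtimes\O_{\P^2}(3)$, yields the stated bundles and completes case (i).

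The main obstacle is the case-by-case identification step: matching $X = \P(\F)$ against Mukai's list to isolate exactly the $\P^1$-bundles, and, in the non-product examples, reading off the correct $\F$ and $B$. Within the product case the delicate point is to verify that no del Pezzo threefold other than $V_7$ and $\P(T_{\P^2})$ contributes a genuinely new structure; for instance $M = \P^1 \times \P^1 \times \P^1$ fibers over surfaces, but the resulting base $\P^1 \times (\P^1 \times \P^1) \cong (\P^1)^3$ coincides with a factor $M$ and so falls back into the first alternative of (i).
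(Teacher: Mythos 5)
Your proof is correct and in substance follows the same route as the paper, whose own proof of this claim is only two sentences long: the first assertion is exactly ``follows by Lemma \ref{ii}'' (your verification of the hypotheses --- equidimensionality of $\widetilde \Phi$ via Lemma \ref{not2} in case (B.2), and $(X,\O_X(1)) \ne (\P^4,\O_{\P^4}(1))$ --- is left implicit there), while the identification of the $\P^1$-bundle structures is dismissed as a well-known fact, to be proved either by following the proof of \cite[Thm.~0.1]{wi2} or by direct computation with the globally generated line bundle defining $p$. Your computation $\det \F \cong -K_B$ is precisely what exhibits $X$ as a ruled Fano fourfold of index $2$, i.e.\ the class of varieties classified in \cite{wi2}, so your case analysis amounts to re-proving part of that citation rather than invoking it. The product case you handle with a genuine argument: the assertion that any $\P^1$-bundle structure on $\P^1 \times M$ over a threefold is either the projection to $M$ or $\id_{\P^1}$ times a $\P^1$-bundle structure $M \to S$ over a surface is correct, and follows from the decomposition $\overline{NE}(\P^1 \times M) = \R_{\ge 0}[\P^1 \times \{\mathrm{pt}\}] + \overline{NE}(M)$ together with uniqueness of the contraction of an extremal ray (a fiber of $p$ whose class lies in $\overline{NE}(M)$ is contained in a slice $\{t\} \times M$); Fujita's list of del Pezzo threefolds then yields $V_7$, $\P(T_{\P^2})$ and $\P^1\times\P^1\times\P^1$, the last falling back into the first alternative of (i) as you correctly observe. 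The non-product case you defer to ``comparison with Mukai's list'', which is exactly the same level of completeness as the paper's appeal to \cite{wi2} (or to its own scattered computations with $\L^4=0$ through the examples of \cite{mu}). So, measured against the paper's own standard, there is no gap; the only real difference is that you re-derive, rather than cite, the product half of Wi\'sniewski's classification, making the argument more self-contained at the cost of leaving the identification of Examples 6, 8, 9 as an unexpanded case check.
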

\begin{proof} 
The first fact follows by Lemma \ref{ii}. As for the $\P^1$-bundle structure, this is a well-known fact that can be easily proved either by following the proof of \cite[Thm.~0.1]{wi2} or simply using the fact that the morphism $p$ is given by a globally generated line bundle $\L$ on $X$ with $\L^4=0$ (most calculations of this type are done in the course of the proofs). 
\end{proof}

\begin{claim} 
\label{b21}
If $X$ is a Mukai fourfold of product type, then $(X, \O_X(1),\E)$ is as in (x1) or (x2).
\end{claim}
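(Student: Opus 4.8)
The plan is to reduce to the structural output of Claim \ref{casoii-muk} and then read off cases (x1) and (x2) from the list in its part (i), ruling out the $V_7$ possibility by a cohomological count. Write $X=\P^1\times M$ with $M$ a Del Pezzo threefold, $\O_X(1)=\O_{\P^1}(1)\boxtimes L$ and $K_M=-2L$, and let $\pi_1\colon X\to\P^1$ and $q\colon X\to M$ be the two projections; recall we are in Case (B), so $c_1(\E)^4=0$. First I would apply the Dichotomy Lemma \ref{dico} to $\pi_1$. Case (fact) is impossible, since it would force every fibre of $\pi_1$, that is a copy of $M$, to be a disjoint union of linear spaces; as $M$ is an irreducible Del Pezzo threefold and $(M,L)\ne(\P^3,\O_{\P^3}(1))$ this cannot happen. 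Hence case (fin) holds for $\pi_1$, and since $\dim\P^1=1$, Lemma \ref{fe}(i) gives that every $F_x$ is a line. So we are in case (B.2) and Claim \ref{casoii-muk} applies: $(X,\O_X(1),\E)$ is a linear Ulrich triple with $p=\widetilde\Phi\colon X\cong\P(\F)\to B$, $b=3$, and by part (i) the $\P^1$-bundle structure is one of (a) $\F\cong L^{\oplus 2}$ over $B=M$; (b) $B=\P^1\times\P^2$, $M=V_7$, $\F=\O_{\P^1}(1)\boxtimes(\O_{\P^2}(1)\oplus\O_{\P^2}(2))$; (c) $B=\P^1\times\P^2$, $M=\P(T_{\P^2})$, $\F=\O_{\P^1}(1)\boxtimes T_{\P^2}$.

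In case (a) I would simply rewrite the defining data of the triple. Here $\widetilde\Phi=q$, $\det\F=2L$ and $\F^*=(L^{-1})^{\oplus 2}$, so by \eqref{trip} we have $\E\cong q^*(\G(2L))$, and \eqref{van-trip} reads $H^j(\G(-kL))=0$ for all $j$ and $0\le k\le 2$. Setting $\G':=\G(L)$, so that $\E\cong q^*(\G'(L))$, these conditions become $H^j(\G'(-sL))=0$ for all $j$ and $1\le s\le 3=\dim M$, i.e. $\G'$ is an Ulrich bundle on $(M,L)$. This is exactly case (x1).

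Case (c) is handled in the same bookkeeping spirit. Now $\det\F=\O_{\P^1}(2)\boxtimes\O_{\P^2}(3)$ and $\F^*=\O_{\P^1}(-1)\boxtimes\Omega_{\P^2}$, so $S^k\F^*=S^k(\O_{\P^1}(-1)\boxtimes\Omega_{\P^2})$; thus \eqref{trip} gives $\E\cong p^*(\G\otimes(\O_{\P^1}(2)\boxtimes\O_{\P^2}(3)))$ and \eqref{van-trip} becomes precisely the vanishing $H^j(\G\otimes S^k(\O_{\P^1}(-1)\boxtimes\Omega_{\P^2}))=0$ for $j\ge 0$ and $0\le k\le 2$ demanded in (x2) (the ``$\O_{\P^3}(3)$'' there being $\det T_{\P^2}=\O_{\P^2}(3)$). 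So (c) yields case (x2).

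The heart of the argument, and the step I expect to be the main obstacle, is to exclude case (b). The idea is that the triple condition \eqref{van-trip} for the \emph{decomposable} $\F=\O_{\P^1}(1)\boxtimes(\O_{\P^2}(1)\oplus\O_{\P^2}(2))$ cannot be met by any rank $r\ge 1$ bundle $\G$ on $\P^1\times\P^2$. Expanding $S^k\F^*=\bigoplus_{i+j=k}\O_{\P^1}(-k)\boxtimes\O_{\P^2}(-i-2j)$ for $0\le k\le 2$, condition \eqref{van-trip} requires $H^\bullet(\G\otimes\O(a,b))=0$ for the six pairs $(a,b)\in\{(0,0),(-1,-1),(-1,-2),(-2,-2),(-2,-3),(-2,-4)\}$, where $\O(a,b):=\O_{\P^1}(a)\boxtimes\O_{\P^2}(b)$. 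Taking Euler characteristics turns these into six linear conditions on the Hilbert polynomial $(a,b)\mapsto\chi(\G\otimes\O(a,b))$, which has degree $\le 1$ in $a$ and $\le 2$ in $b$ and so lies in the six-dimensional space spanned by the monomials $a^\alpha b^\beta$ with $\alpha\le 1$, $\beta\le 2$. A direct check (the one genuinely computational point) shows these six points are unisolvent for such polynomials — the only one vanishing at all of them is the zero polynomial — so the six conditions force $\chi(\G\otimes\O(a,b))\equiv 0$. This contradicts Riemann--Roch, by which the coefficient of $ab^2$ equals $\tfrac{r}{2}\ne 0$. Hence no such $\G$ exists, case (b) does not occur, and only (x1) and (x2) remain, as claimed. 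I note that the analogous count does \emph{not} exclude (c): there \eqref{van-trip} imposes only three Euler-characteristic conditions (one per $k$), which under-determine the six-dimensional space and so admit nonzero solutions.
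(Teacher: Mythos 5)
Your proof is correct, and although its core (invoking Claim \ref{casoii-muk} and translating its cases (a) and (c) into (x1) and (x2) through \eqref{trip} and \eqref{van-trip}) coincides with the paper's, you handle the two substantive steps by genuinely different arguments. For the reduction to case (B.2), the paper writes $\det\E=p_1^*\O_{\P^1}(a)+p_2^*N$ and uses $0=c_1(\E)^4=4aN^3$: if $a=0$ it lands in (x1) at once via \cite[Lemma 5.1]{lo}, while if $N^3=0$ it deduces $\rho(M)\ge 2$, so that $M$ is $V_7$, $\P^1\times\P^1\times\P^1$ or $\P(T_{\P^2})$, and then rules out case (B.1) by a separate argument for each of these three $M$ (blow-up geometry plus the Dichotomy Lemma for $V_7$, the three projections for $\P^1\times\P^1\times\P^1$, the $\P^1$-bundle structure for $\P(T_{\P^2})$). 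Your single application of the Dichotomy Lemma \ref{dico} to $\pi_1$ --- case (fact) fails because an irreducible fibre isomorphic to $(M,L)$ with $K_M=-2L$ can never be a linear space --- combined with Lemma \ref{fe}(i) achieves the same reduction uniformly, with no Picard-rank discussion and no case-by-case analysis; that is a real simplification. For the exclusion of the $V_7$ case, the paper reduces to $\G$ indecomposable, observes that \eqref{van-trip} makes $\G(1)$ Ulrich, hence ACM, on $(\P^1\times\P^2,\O_{\P^1}(1)\boxtimes\O_{\P^2}(1))$, and invokes the classification \cite[Thm.~B]{fms}, eliminating the two resulting shapes by explicit cohomology computations; your replacement is purely numerical, and it checks out: the $6\times 6$ matrix evaluating $\{1,a,b,ab,b^2,ab^2\}$ at your six points is nonsingular (its determinant is $8$), and Riemann--Roch on $\P^1\times\P^2$ does give $\tfrac{r}{2}$ as the coefficient of $ab^2$, so the forced identity $\chi\equiv 0$ is a genuine contradiction. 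What your route buys is self-containedness (no appeal to the representation-type classification) together with a structural explanation of why (b) dies while (c) survives (six independent Euler-characteristic conditions versus three); what the paper's route buys is finer information, since it identifies the candidate bundles before killing them, in the same style it uses later for Mukai's examples 8 and 9. Two last confirmations: your reading of ``$\O_{\P^3}(3)$'' in (x2) as a typo for $\det T_{\P^2}=\O_{\P^2}(3)$ is the right one, and, exactly like the paper, your argument rests on Claim \ref{casoii-muk} providing an exhaustive list of the linear $\P^1$-bundle structures on product-type Mukai fourfolds.
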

\begin{proof} 
We have that $X \cong \P^1 \times M$ with $M$ a Fano threefold of even index, hence $\O_X(1) \cong \O_{\P^1}(1)\boxtimes L$ where $K_M=-2L$. Let $p_i, i=1, 2$ be the two projections. Then $\det \E = p_1^*(\O_{\P^1}(a)) + p_2^*N$ for some $a \in \Z$ and some line bundle $N$ on $M$. Now
$$0 = c_1(\E)^4 = 4aN^3.$$
If $a=0$, then $N$ is globally generated and \cite[Lemma 5.1]{lo} gives that there is a vector bundle $\H$ on $M$ such that $\E \cong p_2^*\H$. Hence we are in case (x1) by \cite[Lemma 4.1]{lo}. Suppose now that $N^3=0$. Then $\rho(M) \ge 2$, for otherwise we have that then $N \cong \O_M$ and therefore $c_1(\E)^2=0$, a contradiction.
Hence the only possibility is that either $M= V_7, \P^1\times \P^1\times \P^1$ or a hyperplane section of the Segre embedding of $\P^2 \times \P^2$. 

First, assume that we are in case in case (B.1).

We have $\P^k = F_{x_0} \subset X$ for $k=2$ or $3$. Since ${p_1}_{|F_{x_0}} : \P^k \to \P^1$ must be constant, it follows that $F_{x_0}=\{y\} \times Z$, where $y \in \P^1$ and $Z$ is a linear $\P^k$ contained in $M$. In most cases we will write this as $F_{x_0} \subset M$. Therefore $k=2$ and $L_{|F_{x_0}}=\O_{\P^2}(1)$.

If $M= V_7$, let $\varepsilon : V_7 \to \P^3$ be the blow-up map with exceptional divisor $E$. Let $\widetilde H$ be the pull back of a plane, so that $L = 2\widetilde H -E$. Note that it cannot be that $E \cap Z = \emptyset$, for otherwise $\O_{\P^2}(1) = L_{|Z} = 2\widetilde H_{|Z}$. Therefore $\dim E \cap Z \ge 1$ and the morphism $\varepsilon_{|Z} : \P^2=Z \to \P^3$ contracts $E \cap Z$ to a point, hence it is constant, so that $Z = E$. Now $h=\id_{\P^1} \times \varepsilon : X \cong \P^1 \times V_7 \to \P^1 \times \P^3$ contracts $F_{x_0}$ to a point. But then the Dichotomy Lemma implies that $F_x \subset h^{-1}(h(x))=\{x\}$, a contradiction. 

If $M= \P^1\times \P^1\times \P^1$ observe that if $p_i, 1 \le i \le 3$ is a projection, then ${p_i}_{|F_{x_0}}: \P^k=F_{x_0} \to \P^1$ must be constant, giving a contradiction. 

If $M$ a hyperplane section of the Segre embedding of $\P^2 \times \P^2$, then, as is well known, $F_{x_0}  = \P^2 \times \{y\}$ or $\{z\} \times \P^2$. In the first case, if $p_2: \P^2 \times \P^2 \to \P^2$ is the second projection, then ${p_2}_{|M}(F_{x_0})  = \{y\}$, hence $F_{x_0} \subset ({p_2}_{|M})^{-1}(y)$. But this is a contradiction since $\dim ({p_2}_{|M})^{-1}(y) = 1$, as one can see using the isomorphism $M \cong \P(T_{\P^2})$ and that ${p_2}_{|M} : \P(T_{\P^2}) \to \P^2$ is the projection map. In the second case a similar contradiction can be obtained. 

Next assume that we are in case in case (B.2), so that we can apply Claim \ref{casoii-muk}.

If $\F \cong L^{\oplus 2}$, setting $\G' = \G(L)$ we get that  $\E \cong p^*(\G'(L))$ and this gives again case (x1).

Now when $M=\P(T_{\P^2})$ and $\F=\O_{\P^1}(1)\boxtimes T_{\P^2}$ we are in case (x2) by \eqref{van-trip}.

Finally consider the case $M=V_7$ and $\F=\O_{\P^1}(1)\boxtimes (\O_{\P^2}(1) \oplus \O_{\P^2}(2))$ on $B=\P^1 \times \P^2$. For ease of notation we will set, for any $c, d \in \Z$, $\O_B(c,d)=\O_{\P^1}(c)\boxtimes \O_{\P^2}(d), \O_B(c) = \O_B(c,c)$ and $\H(c,d) = \H \otimes \O_B(c,d)$ for any sheaf $\H$ on $B$. Now \eqref{van-trip} gives in particular the vanishings
$$H^j(\G(-s))= H^j(\G(-1,-2))= H^j(\G(-2,-3))=0 \ \hbox{for} \ j \ge 0, 0 \le s \le 2.$$
Since the same vanishings hold for any direct summand of $\G$, we can assume that $\G$ is indecomposable. Note that the vanishings give that $\G(1)$ is an Ulrich bundle for $(B, \O_B(1))$, hence it is also ACM by Remark \ref{gen}(iii). Then \cite[Thm.~B]{fms} gives that either $\G \cong \O_{\P^1} \boxtimes \Omega_{\P^2}(1)$ or $\G$ fits into an exact sequence
$$0 \to \O_B(-1,0)^{\oplus a} \to \G \to \O_B(1,-1)^{\oplus b} \to 0.$$
In the first case we find the contradiction
$$0 = H^3(\G(-2,-3))= H^3(\O_{\P^1}(-2)\boxtimes \Omega_{\P^2}(-2)) \cong H^1(\O_{\P^1}(-2)) \otimes H^2(\Omega_{\P^2}(-2))\ne 0.$$
In the second case we have an exact sequence
$$0 \to \O_B(-2)^{\oplus a} \to \G(-1,-2) \to \O_B(0,-3)^{\oplus b} \to 0.$$
But $H^2(\G(-1,-2))=H^3(\O_B(-2))=0$ giving, for $b \ne 0$, the contradiction $H^2(\O_B(0,-3))=0$. Therefore $b = 0$ and $\G=\O_B(-1,0)^{\oplus a}$, giving the contradiction $0=H^3(\G(-2,-3))=H^3(\O_B(-3))^{\oplus a} \ne 0$. Thus this case does not occur. This proves Claim \ref{b21}.
\end{proof}

To finish the proof of Theorem \ref{main3}, it remains to consider the Mukai fourfolds not of product type, which, by \cite[Thm.~7]{mu}, are linear sections of the varieties listed in \cite[Ex.~1-9]{mu}. 

\begin{claim}
\label{b21bis}
If $X$ is a Mukai fourfold not of product type, then $(X, \O_X(1), \E)$ is as in (x3)-(x5).
\end{claim}
\begin{proof} 
We refer to \cite[Thm.~7]{mu}, \cite[Table 0.3]{wi1}. 

In example 1 we have that $X$ is a double cover $f : X \to \P^2 \times \P^2$ ramified along a divisor of type $(2,2)$ and $\O_X(1)=f^*L, L=\O_{\P^2}(1) \boxtimes \O_{\P^2}(1)$. For $i=1, 2$ let $p_i : \P^2 \times \P^2 \to \P^2$ be the projections and set $q_i= f \circ p_i : X \to \P^2$. Let $Y$ be a smooth hyperplane section of $X$ and set $h_i = {q_i}_{|Y} : Y \to \P^2$. By Lefschetz's theorem we know that the restriction map $\Pic(X) \to \Pic(Y)$ is an isomorphism. On the other hand, $Y$ is just the Fano threefold listed as No.~6b in Mori-Mukai's list \cite[Table 2]{mm1} and it follows from \cite[Thm.~5.1 and proof of Thm.~1.7]{mm2} that $\Pic(Y)$ is generated by $h_i^*(\O_{\P^2}(1)), i=1,2$. Therefore $\Pic(X)$ is generated by $A = q_1^*(\O_{\P^2}(1))$ and $B=q_2^*(\O_{\P^2}(1))$. Now $\det \E = aA + bB$, for some $a, b \in \Z$, hence
$$0 = c_1(\E)^4 = 12a^2b^2.$$
Therefore either $a=0$ or $b=0$, giving the contradiction $c_1(\E)^3=0$.
Thus example 1 is excluded.

In examples 2 and 4, setting $Q=Q_3$, we have that $X$ is the hyperplane section of $\P^2 \times Z$, where $Z=\P^3$ (respectively $Q$) under the Segre embedding given by $L = \O_{\P^2}(1) \boxtimes M$, with $M = \O_{\P^3}(2)$ (resp. $M=\O_Q(1))$ and $\O_X(1)=(\O_{\P^2}(1) \boxtimes \O_{\P^3}(1))_{|X}$ (resp. $\O_X(1) = L_{|X}$). Let $p_i, i=1, 2$ be the two projections on $\P^2 \times Z$ and let $q={p_2}_{|X}$. By Lefschetz's theorem we know that $\Pic(X)$ is generated by $A_{|X}$ and $B_{|X}$ where $A=p_1^*(\O_{\P^2}(1))$ and $B=p_2^*(\O_{\P^3}(1))$ (resp. $B=p_2^*(\O_Q(1))$). Now $\det \E = aA_{|X} + bB_{|X}$, for some $a, b \in \Z$. Hence, in both cases,
$$0 = c_1(\E)^4 = \sum\limits_{j=0}^4 \binom{4}{j}a^jb^{4-j}A^j\cdot B^{4-j}X=4ab^2(b+3a).$$
If $a \ne 0$, then either $b=0$ and $\det \E = aA$, hence $c_1(\E)^3=0$, a contradiction, or $b=-3a$ and $\det \E = a(A - 3B)$. But this is not nef: if $a>0$, let $y \in \P^2$ and choose a curve $C$ in the surface $(\{y\} \times Z)\cap X$. Then $A \cdot C=0, B \cdot C = \frac{1}{2} L \cdot C > 0$ (resp. $B \cdot C = L \cdot C > 0$), hence $\det \E \cdot C =-3aB \cdot C < 0$; if $a<0$, let $z \in Z$ and let $C=(\P^2 \times \{z\}) \cap X$. Then $B \cdot C=0, A \cdot C = L \cdot C > 0$, hence $\det \E \cdot C =aL \cdot C < 0$. Since $\det \E$ is globally generated, this is a contradiction. Therefore $a=0, \det \E = q^*(\O_Z(b))$ and it follows by \cite[Lemma 5.1]{lo} that there is a rank $r$ vector bundle $\G$ on $Z$ such that $\E \cong q^*\G$. 

When $Z=Q$, exactly as in the case of the hyperplane section of $\P^2 \times \P^3$ (proof of Claim \ref{serve}), we see that $H^i(\E(-pH))=0$ for all $i \ge 0$ and $1 \le p \le 4$, together with the K\"unneth formula implies that $H^j(\G(-2)(-s))=0$ for all $j \ge 0$ and $1 \le s \le 3$. But then $\G(-2)$ is an Ulrich vector bundle for $(Q, \O_Q(1))$, hence $r$ is even and $\G \cong \mathcal S(2)^{\oplus (\frac{r}{2})}$ by \cite[Lemma 3.2(iv)]{lms}, so that we are in case (x3). Hence we are done with example 4. 

When $Z=\P^3$ we will reach a contradiction. To this end set $\L = \O_{\P^2}(1) \boxtimes \O_{\P^3}(1)$ and consider the exact sequence
$$0 \to (p_2^*\G)(-p\L-X) \to (p_2^*\G)(-p\L) \to \E(-pH) \to 0.$$
Since $H^i(\E(-pH))=0$ for all $i \ge 0$ and $1 \le p \le 4$ we deduce that
$$H^i((p_2^*\G)(-p\L-X)) \cong H^i((p_2^*\G)(-p\L)) \ \hbox{for all} \ i \ge 0 \ \hbox{and} \ 1 \le p \le 4.$$
Now the K\"unneth formula gives that
$$h^0(\O_{\P^2}(p-2))h^{i-2}(\G(-p-2))=h^0(\O_{\P^2}(p-3))h^{i-2}(\G(-p)) \ \hbox{for} \ i \ge 2 \ \hbox{and} \ 1 \le p \le 4$$
and one easily sees that this implies that $H^j(\G(-4))=H^j(\G(-6))=0$ and $h^j(\G(-3))=3h^j(\G(-5))$ for all $j \ge 0$. Setting $\H = \G(-3)$ we find
\begin{equation}
\label{coo}
H^j(\H(-1))=H^j(\H(-3))=0 \ \hbox{and} \ h^j(\H)=3h^j(\H(-2)) \ \hbox{for all} \ j \ge 0.
\end{equation}
Now let $M$ be a plane in $\P^3$ and consider, for $l \in \Z$, the exact sequences
\begin{equation}
\label{exa}
0 \to \H(-l-1) \to \H(-l) \to \H(-l)_{|M} \to 0.
\end{equation}
We have $H^0(\H(-1))=0$ by \eqref{coo}, hence also $H^0(\H(-2))=0$ and then \eqref{coo} gives that \begin{equation}
\label{exa2}
h^0(\H)=3h^0(\H(-2))=0. 
\end{equation}
Now $H^1(\H(-1))=0$ by \eqref{coo}, hence \eqref{exa} with $l=0$ implies that $H^0(\H_{|M})=0$, hence also $H^0(\H(-1)_{|M})=0$. Setting $l=1$ in \eqref{exa} and using again $H^1(\H(-1))=0$, we deduce that 
$H^1(\H(-2))=0$. We have $H^3(\H(-3))=0$ by \eqref{coo}, hence also $H^3(\H(-2))=0$. Since $H^2(\H(-1))=0$ by \eqref{coo}, then \eqref{exa} with $l=1$ implies that $H^2(\H(-1)_{|M})=0$, hence also $H^2(\H_{|M})=0$. Therefore \eqref{exa} with $l=0$ and \eqref{coo} imply that $H^2(\H)=0$, whence also $H^2(\H(-2))=0$ by \eqref{coo}. Thus we have proved that $H^j(\H(-2))=0$ for $j \ge 0$ and together with \eqref{exa} this implies that $\H$ is an Ulrich vector bundle on $\P^3$. But this contradicts \eqref{exa2}.

Thus we are done with example 2.

In example 3 we have that $X$ is twice a hyperplane section of $\P^3 \times \P^3$ under the Segre embedding given by $L = \O_{\P^3}(1) \boxtimes \O_{\P^3}(1)$ and $\O_X(1) = L_{|X}$. Let $p_i, i=1, 2$ be the two projections on $\P^3 \times \P^3$ and let $p = {p_1}_{|X}$ and $q={p_2}_{|X}$. By Lefschetz's theorem we know that $\Pic(X)$ is generated by $A=p^*(\O_{\P^3}(1))$ and $B=q^*(\O_{\P^3}(1))$ with $A^4=B^4=0, A \cdot B^3 = A^3 \cdot B=1$ and $A^2 \cdot B^2=2$. Now $\det \E = aA + bB$, for some $a, b \in \Z$, hence
$$0 = c_1(\E)^4 = 4ab(b^2+3ab+a^2).$$
and, the case $b=0$ being completely similar, we can assume that $a=0$ and $\det \E=q^*(\O_{\P^3}(b))$. It follows by \cite[Lemma 5.1]{lo} that there is a rank $r$ vector bundle $\G$ on $\P^3$ such that $\E \cong q^*\G$. We now claim that $\G(-3)$ is an Ulrich vector bundle for $(\P^3, \O_{\P^3}(1))$. To this end observe that, as is well known, we can see $X$ as the hyperplane section of $\P(T_{\P^3})$ embedded by the tautological line bundle $\xi$ and $q := (p_2)_{|\P(T_{\P^3})} : \P(T_{\P^3}) \to \P^3$ is just its projection map. Thus $\E \cong (q^*\G)_{|X}$ and we have an exact sequence
$$0 \to (q^*\G)(-(s+1)\xi) \to (q^*\G)(-s\xi) \to \E(-sH) \to 0.$$
Since $H^i(\E(-sH))=0$ for all $i \ge 0$ and $1 \le s \le 4$ we deduce that
$$H^i((q^*\G)(-(s+1)\xi)) \cong H^i((q^*\G)(-s\xi)) \ \hbox{for all} \ i \ge 0 \ \hbox{and} \ 1 \le s \le 4.$$
Setting $j=i-2$, the Leray spectral sequence implies that 
$$H^j(\G \otimes R^2q_*(-(s+1)\xi)) \cong H^j(\G \otimes R^2q_*(-s\xi)) \ \hbox{for all} \ j \ge 0 \ \hbox{and} \ 1 \le s \le 4.$$
Since $R^2q_*(-2\xi)=0$ we deduce that $H^j(\G \otimes R^2q_*(-h\xi))=0$ for all $j \ge 0$ and $3 \le h \le 5$. On the other hand, $R^2q_*(-h\xi) \cong (S^{h-3} \Omega_{\P^3})(-4)$ and therefore we have that $H^j(\G(-4) \otimes S^k \Omega_{\P^3})=0$ for all $j \ge 0$ and $0 \le k \le 2$. But this is condition (6.4) in \cite{lo} (applied to $\G(-4)$), and it is proved there that this implies that $\G(-3)$ is an Ulrich vector bundle for $(\P^3, \O_{\P^3}(1))$. Hence $\G \cong \O_{\P^3}(3)^{\oplus r}$ by Remark \ref{kno1} and we are in case (x4). Thus we are done with example 3.

In example 5 we have that $X$ is the blow-up of $Q=Q_4$ along a conic. Let $\varepsilon : X \to Q$ be the blow-up map with exceptional divisor $E$ and let $\widetilde H = \varepsilon^*(\O_Q(1))$. We have that $\det \E = a\widetilde H + bE$, for some $a, b \in \Z$. It is easily checked that $E^4=6, \widetilde H\cdot E^3=2, \widetilde H^i\cdot E^{4-i}=0$ for $i =2, 3$ and $\widetilde H^4=2$. But then
$$0 = c_1(\E)^4 = 6b^4+8ab^3+2a^4$$
implies that either $a=b=0$, that is $c_1(\E)=0$, a contradiction, or $b=-a$, and in this case $\det \E=a(\widetilde H-E)$, hence $c_1(\E)^3=0$, again a contradiction. Thus example 5 is excluded.

In example 6 we have, by \cite[Table 0.3 and Thm.~1.1(iii)]{wi1}, \cite[Cor.~page 206]{sw} (or \cite[Thm.~6.5]{mos}), that $(X,\O_X(1))$ has two linear $\P^1$-bundle structures $p: X \cong \P(\F) \to B$ with $\F \cong \N(2)$ or $\F \cong \mathcal S(1)$ where $\N$ is a null-correlation bundle on $B=\P^3$ and $\mathcal S$ is the spinor bundle on $B=Q_3$. We first prove that case (B.1) does not occur. In fact, the Dichotomy Lemma applied to $p$ in the case $B=Q_3$ and Lemma \ref{fe}(ii) give that we are in case (emb). But this implies that $p_{|F_{x_0}} : \P^k=F_{x_0} \to Q_3$ is an embedding, hence that $k=2$. Therefore $p_{|F_{x_0}}$ is the composition of the Veronese embedding $v_s$ of $\P^2$ with an isomorphic projection. By Severi's theorem \cite{se} on projection of smooth surfaces it follows that $s=2$. But it is well known that the Veronese surface in $\P^4$ is not contained in a smooth quadric. 
Therefore we are in case (B.2) and Claim \ref{casoii-muk} gives that $(X,\O_X(1),\E)$ is a linear Ulrich triple, so that $\E \cong p^*(\G(4))$, in the case $B=\P^3$, or $p^*(\G(3))$, in the case $B=Q_3$, where $\G$ is a rank $r$ vector bundle on $B$ satisfying \eqref{van-trip}. When $B=Q_3$ we get case (x5). We now prove that, still in case (B.2), $B=\P^3$ does not occur. In fact, we have that $X \cong \P(\N)$ and setting $\xi= \O_{\P(\N)}(1), R = p^*(\O_{\P^3}(1))$, then $\O_X(1)=\xi+2R$. It is easily verified that $\xi^4=\xi^2 \cdot R^2=R^4=0, \xi^3 \cdot R=-1, \xi \cdot R^3=1$ and $H^4=24$. Now consider the surface $S$ complete intersection of two general divisors in $|H|$. Then $S$ is a K3 surface and $\E_{|S}$ is an Ulrich vector bundle for $(S,H_{|S})$ by Remark \ref{gen}(iv). We can write $\det \E=c_1R$ for some $c_1 \in \Z$. It follows by \cite[Prop.~2.1]{c} that
\begin{equation}
\label{c11}
c_1(\E) \cdot H^3 = c_1(\E_{|S}) \cdot H_{|S}=\frac{3r}{2}H_{|S}^2=36r
\end{equation}
and
\begin{equation}
\label{c2}
c_2(\E_{|S})=\frac{1}{2}c_1(\E_{|S})^2-r(H_{|S}^2-\chi(\O_S))=\frac{1}{2}c_1(\E_{|S})^2-22r.
\end{equation}
From \eqref{c11} we get
$$36r=c_1R \cdot (\xi+2R)^3=11c_1$$
giving $c_1=\frac{36r}{11}$. But $\E_{|S}$ is $\mu$-semistable by \cite[Thm.~2.9]{ch}, hence, using \eqref{c2}, Bogomolov's inequality gives
$$0 \le 2rc_2(\E_{|S})-(r-1)c_1(\E_{|S})^2 = 4c_1^2-44r^2=-\frac{140r^2}{121}$$
a contradiction. Thus the case $X \cong \P(\N)$ does not occur and we are done with example 6.

To see example 7, consider $\varepsilon : Y \to \P^5$ the blow-up along a line with exceptional divisor $E$ and let $\widetilde H = \varepsilon^*(\O_{\P^5}(1))$. Set $L = 2\widetilde H - E$. Then $X$ is a smooth divisor in $|L|$ and $\O_X(1) = L _{|X}$. By Lefschetz's theorem we know that $\Pic(X)$ is generated by $\widetilde H_{|X}$ and $E_{|X}$. It is easily checked that $E^5=-4, \widetilde H\cdot E^4=-1, \widetilde H^5=1$ and $\widetilde H^i\cdot E^{5-i}=0$ for $2 \le i \le 4$. Now $\det \E = a\widetilde H _{|X} + bE_{|X}$, for some $a, b \in \Z$ and
$$0 = c_1(\E)^4 = \sum\limits_{j=0}^4 \binom{4}{j}a^jb^{4-j}\widetilde H^j\cdot E^{4-j}(2\widetilde H - E)=2b^4+2a^4+4ab^3.$$
If $a=0$ we get that $b=0$, that is $c_1(\E)=0$, a contradiction. If $a \ne 0$ then $b=-a$ and $\det \E = a(\widetilde H - E)_{|X}$.  Now observe that $Y \cong \P(\O_{\P^3}^{\oplus 2} \oplus \O_{\P^3}(1))$ and the bundle morphism $q : Y \to \P^3$ is just the morphism defined by $|\widetilde H - E|$. Moreover $L = \xi + q^*(\O_{\P^3}(1))$, where $\xi$ is the tautological line bundle. If we set $h=q_{|X} : X \to \P^3$ we deduce from $\det \E = a(\widetilde H - E)_{|X}$ and \cite[Lemma 5.1]{lo} that there is a rank $r$ vector bundle $\G$ on $\P^3$ such that $\E \cong h^*\G$. Hence $\E \cong (q^*\G)_{|X}$ and we have an exact sequence
$$0 \to (q^*\G)(-(p+1)L) \to (q^*\G)(-pL) \to \E(-pH) \to 0.$$
Since $H^i(\E(-pH))=0$ for all $i \ge 0$ and $1 \le p \le 4$ we deduce that
$$H^i((q^*\G)(-(p+1)L)) \cong H^i((q^*\G)(-pL)) \ \hbox{for all} \ i \ge 0 \ \hbox{and} \ 1 \le p \le 4.$$
Setting $j=i-2$, the Leray spectral sequence implies that 
$$H^j(\G(-p-1) \otimes R^2q_*(-(p+1)\xi)) \cong H^j(\G(-p) \otimes R^2q_*(-p\xi)) \ \hbox{for all} \ j \ge 0 \ \hbox{and} \ 1 \le p \le 4.$$
Since $R^2q_*(-2\xi)=0$ we deduce that $H^j(\G(-h) \otimes R^2q_*(-h\xi))=0$ for all $j \ge 0$ and $3 \le h \le 5$. On the other hand, $R^2q_*(-h\xi) \cong (S^{h-3}(\O_{\P^3}^{\oplus 2} \oplus \O_{\P^3}(-1))(-1)$ and therefore we have that $H^j(\G(-s))=0$ for all $j \ge 0$ and $4 \le s \le 8$. In particular $\G(-3)$ is an Ulrich vector bundle for $(\P^3, \O_{\P^3}(1))$, hence $\G \cong \O_{\P^3}(3)^{\oplus r}$ by Remark \ref{kno1}. But this gives the contradiction 
$$0 =  H^3(\G(-7)) = H^3(\O_{\P^3}(-4)^{\oplus r}) \ne 0.$$ 
Thus example 7 is excluded.

In examples 8 and 9 we first claim that $(X,\O_X(1),\E)$ is a linear Ulrich triple. In fact, we already know that $(X, \O_X(1)) \cong (\P(\F),\O_{\P(\F)}(1))$ where $\F = \O_B(1) \oplus \O_B(m)$, with $B=\P^3$ (respectively $B=Q_3$), $m=3$ (resp. $m=2$). Now let $p : X \cong \P(\O_B \oplus \O_B(m-1)) \to B$ be the projection map, let $\xi$ be the tautological line bundle and let $R=p^*(\O_B(1))$. Set $\det \E = a\xi + bR$, for some $a, b \in \Z$. If $a=0$ we get that $\det \E = p^*(\O_B(b))$ and it follows by \cite[Lemmas 4.1 and 5.1]{lo} that $(X,\O_X(1),\E)$ is a linear Ulrich triple. Assume that $a \ne 0$. In example 8 we have that $\xi^i \cdot R^{4-i}=2$ for $1 \le i \le 4$ and $R^4=0$. Hence 
$$0 = c_1(\E)^4 =2a^4+8a^3b+12a^2b^2+8ab^3$$
so that $b=-\frac{1}{2}a$ and $2 \det \E = a(2\xi - R)$. Let $f$ be a fiber of $p$, so that $0 \le c_1(\E) \cdot f = 2a$, hence $a>0$. But $Q_3 =: Q \cong \P(\O_Q) \subset X$ and $\xi_{|\P(\O_Q)} \cong \O_Q$, while $R_{|\P(\O_Q)} \cong \O_Q(1)$, hence $(2 \det \E)_{|\P(\O_Q)} = \O_Q(-a)$, a contradiction since $\det \E$ is globally generated. In example 9 we have that $\xi^4=8, \xi^3 \cdot R =4, \xi^2 \cdot R^2=2, \xi \cdot R^3=1$ and $R^4=0$. Now 
$$0 = c_1(\E)^4 =8a^4+16a^3b+12a^2b^2+4ab^3$$
hence $b=-a$ and $\det \E = a(\xi - R)$. But $\P^3 \cong \P(\O_{\P^3}) \subset X$ and $\xi_{|\P(\O_{\P^3})} \cong \O_{\P^3}$, while $R_{|\P(\O_{\P^3})} \cong \O_{\P^3}(1)$, hence $(\det \E)_{|\P(\O_{\P^3})} = \O_{\P^3}(-1)$ is not globally generated, a contradiction. Thus we have proved that $(X,\O_X(1),\E)$ is a linear Ulrich triple. Hence $\E \cong p^*(\G(m+1))$, where $\G$ is a rank $r$ vector bundle on $B$ such that $H^j(\G \otimes S^k \F^*)=0$ for $j \ge 0, 0 \le k \le 2$. This gives in particular that $H^j(\G(-s))=0$ for $j \ge 0$ and $0 \le s \le 3$. Hence $\G(1)$ is an Ulrich vector bundle for $(B,\O_B(1))$. When $B=\P^3$ we get by Remark \ref{kno1} that $\G \cong \O_{\P^3}(-1)^{\oplus r}$. But this gives the contradiction
$$0 = H^3(\G(-3))=H^3(\O_{\P^3}(-4))^{\oplus r} \ne 0.$$
Instead, when $B=Q_3$, we get by \cite[Lemma 3.2(iv)]{lms} that $\G \cong \mathcal S(-1)^{\oplus (\frac{r}{2})} \cong S^{\oplus (\frac{r}{2})}$. But this gives the contradiction
$$0 = h^3(\G(-3))=h^3(S(-3))^{\oplus (\frac{r}{2})}=h^0(S^*)^{\oplus (\frac{r}{2})}=h^0(\mathcal S)^{\oplus (\frac{r}{2})} \ne 0.$$
Thus examples 8 and 9 are excluded.

This proves Claim \ref{b21bis}.
\end{proof}
This concludes the proof of Theorem \ref{main3} in Case (B). Therefore the proof of Theorem \ref{main3} is complete. 
\end{proof}
\renewcommand{\proofname}{Proof}


Finally, our last two results.

\renewcommand{\proofname}{Proof of Corollary \ref{main4}}
\begin{proof}
If $(X,\O_X(1),\E)$ is as in (i) or as in (iii), it follows by Remark \ref{kno1} and \cite[Thm.~1]{lms} and Theorem \ref{main3} that $\E$ is Ulrich not big. If $(X,\O_X(1),\E)$ is as in (ii1)-(ii3), it follows by \cite[Prop.~6.1 and 6.2]{lo} for (ii1) and (ii2) and by \cite[(3.5)]{b1} for (ii3), that $\E$ is Ulrich not big. 

Vice versa assume that $\E$ is Ulrich not big. 

Suppose first that $\rho(X) = 1$. 

If $(X,\O_X(1)) \cong (\P^n, \O_{\P^n}(1))$ we are in case (i1) by Remark \ref{kno1}. If $(X,\O_X(1)) \not\cong (\P^n, \O_{\P^n}(1))$ then $c_1(\E)^n>0$ by \cite[Lemma 3.2]{lo} and we are in case (i2) by \cite[Thms.~1 and 2]{lm} and Theorem \ref{main3}. This proves (i).

Now suppose that $(X, \O_X(1))$ is a del Pezzo variety.

If $n=2$, since del Pezzo surfaces are not covered by lines, there is no such $\E$. 

If $n=3$ it follows by \cite[Thm.~3]{lm} that $c_1(\E)^3=0, c_1(\E)^2 \ne 0$. According to the classification of del Pezzo $3$-folds (see for example \cite[\S 1]{lp}, \cite{fu1}), we get that $X$ is either $\P^1 \times \P^1 \times \P^1$ or $\P(T_{\P^2})$ or $V_7$. In the first two instances we are in cases (ii1) and (ii2) by \cite[Prop.~6.1 and 6.2]{lo}. Now consider the third one, that is we have $\varepsilon : X=V_7 \to \P^3$ the blow-up map with exceptional divisor $E$ and let $\widetilde H = \varepsilon^*(\O_{\P^3}(1))$. Hence $\det \E = a\widetilde H + bE$, for some $a, b \in \Z$. It is easily checked that $\widetilde H^3=E^3=1, \widetilde H^i\cdot E^{3-i}=0$ for $i =1, 2$. But then
$$0 = c_1(\E)^3 = a^3+b^3$$
implies that $b=-a$, that is $\det \E = a(\widetilde H - E)$. As is well known, we have that $X \cong \P(\O_{\P^2} \oplus \O_{\P^2}(1))$ and the morphism induced by $|\widetilde H - E|$ is just the projection map $p : X \to \P^2$. We deduce by \cite[Lemma 5.1]{lo} that there is a rank $r$ vector bundle $\G$ on $\P^2$ such that $\E \cong p^*(\G(2))$. But now \cite[Lemma 4.1]{lo} gives that $H^i(\G(-s)) = 0$ for all $i \ge 0$ and $1 \le s \le 3$. In particular $\G$ is an Ulrich vector bundle for $(\P^2, \O_{\P^2}(1))$, hence $\G \cong \O_{\P^2}^{\oplus r}$ by Remark \ref{kno1}. But this gives the contradiction $0 =  H^2(\G(-3)) = H^2(\O_{\P^2}(-3)^{\oplus r}) \ne 0$. Thus this case is excluded.

If $n=4$, using the classification of del Pezzo $4$-folds (see for example \cite[\S 1]{lp}, \cite{fu1}) and case (i), we see that $X=\P^2 \times \P^2 \subset \P^8$. Then we are in case (ii3) by \cite[Thm.~3]{lms}. This proves (ii).

Now suppose that $(X, \O_X(1))$ is a Mukai variety.

We have that $n = 4$ since the Mukai $n$-folds are not covered by lines for $n \le 3$. Therefore we are in cases (x1)-(x5) of Theorem \ref{main3}. This proves (iii).
\end{proof}
\renewcommand{\proofname}{Proof}

\renewcommand{\proofname}{Proof of Corollary \ref{main5}}
\begin{proof}
If $(X,\O_X(1),\E)$ is as in (i) or (ii), it follows by Corollary \ref{main4}, Theorem \ref{main3}, \cite[(3.5)]{b1} and \cite[Prop.~6.2]{lo} that $\E$ is Ulrich with $\det \E$ not big. 

Vice versa assume that $\E$ is Ulrich. Since $\E$ is globally generated and $\det \E$ is not big, we have that $c_1(\E)^n=0$, hence $\rho(X) \ge 2$ and $\E$ is not big.

If $(X, \O_X(1))$ is a del Pezzo $n$-fold, then either $n \le 4$ and we get cases (ii1)-(ii3) of Corollary \ref{main4} by the same corollary, or $n \ge 5$. But in the latter case the classification of del Pezzo $n$-folds (see for example \cite[\S 1]{lp}, \cite{fu1}) gives that $\rho(X)=1$, hence this case does not occur. This proves (i).

Now suppose that $(X, \O_X(1))$ is a Mukai $n$-fold.

If $n \le 4$ we get cases (x1)-(x5) of Theorem \ref{main3} by Corollary \ref{main4}(iii).

If $n \ge 5$, Mukai's classification \cite[Thm.~7]{mu} gives that $X$ is as in examples 3, 4 and 7 in \cite[Ex.~2]{mu}.

In example 3 we have that either $X = \P^3 \times \P^3$ or $X = \P(T_{\P^3})$. In the second case $\E$ is as in (ii2) by \cite[Prop.~6.2]{lo}. In the first case $c_1(\E)^6=0$ immediately gives that $\det \E= p^*(\O_{\P^3}(a))$, for some $a \in \Z$, where $p : \P^3 \times \P^3 \to \P^3$ is one of the two projections. It follows by \cite[Lemmas 4.1 and 5.1]{lo} and Remark \ref{kno1} that $\E$ is as in (ii1). 

In example 4 we have that $X = \P^2 \times Q_3$ and $c_1(\E)^5=0$ immediately gives that either $\det \E= p_1^*(\O_{\P^2}(a))$ or $p_2^*(\O_{Q_3}(a))$, for some $a \in \Z$, where $p_i, i=1, 2$ are the projections. In the second case it follows by \cite[Lemmas 4.1 and 5.1]{lo} and \cite[Lemma 3.2(iv)]{lms} that $\E$ is as in (ii3). In the first case it follows by \cite[Lemma 5.1]{lo} that there is a rank $r$ vector bundle $\G$ on $\P^2$ such that $\E \cong p_1^*\G$. Now the vanishings $H^i(\E(-pH))=0$ for all $i \ge 0$ and $1 \le p \le 5$, together with the K\"unneth formula imply that $H^j(\G(-2)(-s))=0$ for all $j \ge 0$ and $1 \le s \le 3$. But then $\G(-2)$ is an Ulrich vector bundle for $(\P^2, \O_{\P^2}(1))$ and Remark \ref{kno1} gives that $\G \cong \O_{\P^2}(2)^{\oplus r}$, thus giving the contradiction $0 = H^2(\G(-5))=H^2(\O_{\P^2}(-3))\ne 0$.

In example 7 we have that $X$ is the blow-up $\varepsilon : X \to \P^5$ along a line with exceptional divisor $E$ and let $\widetilde H = \varepsilon^*(\O_{\P^5}(1))$. As in the proof of Theorem \ref{main3} (see Claim \ref{b21bis}) we have that $E^5=-4, \widetilde H\cdot E^4=-1, \widetilde H^5=1$ and $\widetilde H^j \cdot E^{5-j}=0$ for $2 \le j \le 4$. Now $\det \E = a\widetilde H + bE$, for some $a, b \in \Z$ and
$$0 = c_1(\E)^5 = \sum\limits_{j=0}^5 \binom{5}{j}a^jb^{5-j}\widetilde H^j\cdot E^{5-j}=a^5-5ab^4-4b^5.$$
If $a=0$ we get that $b=0$, that is $c_1(\E)=0$, a contradiction. If $a \ne 0$ then $b=-a$ and $\det \E = a(\widetilde H - E)$. Again as in the proof of Theorem \ref{main3} (see Claim \ref{b21bis}), using the morphism $p : X \cong \P(\O_{\P^3}^{\oplus 2} \oplus \O_{\P^3}(1)) \to \P^3$ we deduce by \cite[Lemma 5.1]{lo} that there is a rank $r$ vector bundle $\G$ on $\P^3$ such that $\E \cong p^*(\G(3))$. Now \cite[Lemma 4.1]{lo} gives that $H^i(\G(-s)) = 0$ for all $i \ge 0$ and $1 \le s \le 4$. In particular $\G$ is an Ulrich vector bundle for $(\P^3, \O_{\P^3}(1))$, hence $\G \cong \O_{\P^3}^{\oplus r}$ by Remark \ref{kno1}. But this gives the contradiction 
$0 =  H^3(\G(-4)) = H^3(\O_{\P^3}(-4)^{\oplus r}) \ne 0$. Thus example 7 is excluded.

This proves (ii).
\end{proof}
\renewcommand{\proofname}{Proof}

\section{Examples}

In this section we will give some examples that are significant both with respect to Theorem \ref{main3} (for the statement but also for the method of proof) and to the fact that they do not appear in lower dimension.

\begin{example} (cfr. cases (v2) and (v3) in Theorem \ref{main3})
\label{altro}

Let $C$ be a smooth curve, let $L$ be a very ample line bundle on $C$, let $\G$ be an Ulrich vector bundle for $(C,L)$ and let $X=\P^2 \times C \times \P^1$ and $\O_X(1) = \O_{\P^2}(1) \boxtimes L \boxtimes \O_{\P^1}(1)$. For case (v3), let $p : X \to \P^2 \times C$ be the projection and let $\E = p^*(\O_{\P^2}(2) \boxtimes \G(L))$. Then $\E$ is Ulrich on $X$ by \cite[(3.5)]{b1} and $c_1(\E)^4=0$. Similarly, for case (v2), let $q : X \to C \times \P^2$ be the projection and let $\E = q^*(\G(2L) \boxtimes \O_{\P^1}(3))$.  
\end{example}

\begin{example} (cfr. case (vi2) in Theorem \ref{main3})
\label{secondo-bis}

Let $B$ be a smooth irreducible curve and let $L$ be a very ample line bundle on $B$ such that $K_B+2L$ is very ample. Let $Y = \P^1 \times \P^1 \times \P^1$ and let $\L = \O_{\P^1}(1) \boxtimes \O_{\P^1}(1) \boxtimes\O_{\P^1}(1)$. Let $X = B \times Y$, $\O_X(1) = L \boxtimes \L$ and let $p_1 : X \to B$ be the first projection. Then $K_X+2H  = p_1^*(K_B+2L)$, hence $p_1=\varphi_{K_X+2H}$. Thus $\varphi_{K_X+2H}$ gives a del Pezzo fibration on $X$ over $B$ with all fibers $\P^1 \times \P^1 \times \P^1$. Let $\F$ be any Ulrich vector bundle for $(B, L)$ and let $\G=\O_{\P^1}(2) \boxtimes \mathcal S'$ on $Y$. Then $\E:= \F(3L) \boxtimes \G$ is an Ulrich vector bundle on $X$ by \cite[(3.5)]{b1} with $c_1(\E)^4=0, c_1(\E)^3 \ne 0$.
\end{example}

\begin{example} (cfr. case (vi2) in Theorem \ref{main3})
\label{quarto-bis}

Let $Y$ be a smooth irreducible divisor of type $(1,2)$ on $\P^1 \times \P^3$ and let $\O_Y(1)=(\O_{\P^1}(1) \boxtimes \O_{\P^3}(1))_{|Y}$. Then ${\pi_1}_{|Y} : Y \to \P^1$ is a quadric fibration whose fibers are either smooth or a cone in $\P^3$ and there are $12$ such cones by \cite[\S 2]{lant}. Let $X = \P^1 \times Y$ and $\O_X(1) = (\O_{\P^1}(1) \boxtimes \O_Y(1))_{|X}$. Let $\G$ be an Ulrich vector bundle for $(Y,\O_Y(1))$ and let $\E = q^*(\G(1))$, where $q : X \to Y$ is the restriction of the second projection. It follows by \cite[Lemma 4.1]{lo} that $\E$ is an Ulrich vector bundle on $X$ and we are in case  (vi2) in Theorem \ref{main3}.
\end{example}

\begin{example} (cfr. case (vi3) in Theorem \ref{main3})
\label{secondo}

Let $Y = \P^1 \times \P^2 \times \P^2$ and let $L = \O_{\P^1}(1) \boxtimes \O_{\P^2}(1) \boxtimes\O_{\P^2}(1)$. Let $X \in |L|$ be smooth and irreducible and let $\O_X(1)=L_{|X}$. Let $p_1 : X \to \P^1$ be the restriction of the first projection on $Y$. Then $K_X+2H=p_1^*(\O_{\P^1}(1))$, hence $p_1=\varphi_{K_X+2H}$. A general fiber $F$ of $p_1$ is a hyperplane section of the Segre embedding of $\P^2 \times \P^2$, that is $F \cong \P(T_{\P^2})$. Thus $\varphi_{K_X+2H}$ gives a del Pezzo fibration on $X$ over $\P^1$ with general fiber $\P(T_{\P^2})$. To construct $\E$ observe that if $p : Y \cong \P(\O_{\P^1 \times \P^2}^{\oplus 3}) \to \P^1 \times \P^2$ with tautological line bundle $\xi$, then $L= \xi + p^*M$, where $M= \O_{\P^1}(1) \boxtimes \O_{\P^2}(1)$. Let $\E'=p^*(\O_{\P^1}(4) \boxtimes \O_{\P^2}(2))$. Then $\E'$ is an Ulrich line bundle for $(Y, L)$ by \cite[(3.5)]{b1} and \cite[Lemma 4.1(ii)]{lo}. Hence $\E = \E'_{|X}$ is an Ulrich line bundle for $(X, \O_X(1))$ with $c_1(\E)^4=0, c_1(\E)^3 \ne 0$. Moreover $\Phi$ is just the composition of $p_{|X}: X \to \P^1 \times \P^2$ with the embedding of $\P^1 \times \P^2$ by $\O_{\P^1}(4) \boxtimes \O_{\P^2}(2)$. Finally,  $p_{|X}$ has $M^3=3$ fibers that are a linear $\P^2$ by \cite[Ex.~14.1.5]{bs2}, hence so does $\Phi$.
\end{example}

We do not know if there is an example of a triple $(X, \O_X(1), \E)$ with $c_1(\E)^4=0, c_1(\E)^3 \ne 0$ and with the del Pezzo fibration having as fibers the blow-up of $\P^3$ in a point.

\begin{example} (cfr. case (vii) in Theorem \ref{main3})
\label{terzo}

Let $X = \P^2 \times Q$, where $Q=Q_2$, let $\O_X(1) = \O_{\P^2}(2) \boxtimes \O_Q(1)$ and let $p_1 : X \to \P^2$ be the first projection. Then $K_X+2H=p_1^*(\O_{\P^2}(1))$, hence $p_1=\varphi_{K_X+2H}$ is a quadric fibration over $\P^2$. Let $\F$ be an Ulrich vector bundle on $(\P^2, \O_{\P^2}(2))$ and let $\E=\F(2) \boxtimes \mathcal S'$. Then $\E$ is an Ulrich vector bundle for $(X, \O_X(1))$ by \cite[(3.5)]{b1}. Moreover $c_1(\E)^4=0$ and $c_1(\E)^3 \ne 0$. 
\end{example}

\begin{example} (cfr. case (ix) in Theorem \ref{main3})
\label{quarto}

Let $Y$ be a smooth irreducible threefold and let $M$ be a very ample line bundle on $Y$ such that $K_Y + 3M$ is very ample. Let $Z= \P^2 \times Y$ and let $L = \O_{\P^2}(1) \boxtimes M$. Let $X \in |L|$ be smooth and irreducible and let $\O_X(1)=L_{|X}$. Let $p_2 : Z \to Y$ be the second projection and let $q = {p_2}_{|X}$. Then $K_X+2H = q^*(K_Y + 3M)$. Hence $q=\varphi_{K_X+2H}$ gives a structure of scroll over $Y$. Let $\G$ be an Ulrich vector bundle for $(Y, M)$ and let $\E'=p_2^*(\G(2M))$. Then $\E'$ is an Ulrich vector bundle for $(Y, L)$ by \cite[Lemma 4.1]{lo}. Hence $\E = \E'_{|X}$ is an Ulrich vector bundle for $(X, \O_X(1))$ by Remark \ref{gen}(iv). Since $\E =  q^*(\G(2M))$ it follows that $\Phi$ factorizes through $q : X \to Y$. Finally, $q$ has $M^3>0$ fibers that are a linear $\P^2$ by \cite[Ex.~14.1.5]{bs2}. Therefore $c_1(\E)^4=0, c_1(\E)^3 \ne 0$ and $\Phi$ has $M^3$ fibers that are a linear $\P^2$.
\end{example}

\begin{example} (cfr. case (x1) in Theorem \ref{main3})
\label{nuovo-due}

On a Fano $3$-fold $M$ of index $2$, there are several examples of Ulrich vector bundles for $(M,L)$, where $K_M=-2L$. See for instance \cite{b1, cmp}.
\end{example}

\begin{example} (cfr. case (x2) in Theorem \ref{main3})
\label{nuovo}

Let $\F=\O_{\P^1}(1)\boxtimes T_{\P^2}, (X, \O_X(1))=(\P(\F), \O_{\P(\F)}(1))$ and let $p :  X \to \P^1 \times \P^2$ be the projection map. It is easily verified that $\E =p^*(\O_{\P^1}(3)\boxtimes \O_{\P^2}(2))$ is an Ulrich bundle as in (x2) of Theorem \ref{main3}.
\end{example}

\begin{example} (cfr. case (x5) in Theorem \ref{main3})
\label{spi}

The following example was kindly suggested to us by D. Faenzi, whom we thank.

Let $Q \subset \P^4=\P(V)$ be a smooth quadric. We have a natural identification
\begin{equation}
\label{q10}
\Lambda^2 V \cong H^0(\Omega_{\P^4}(2)) \cong H^0(\Omega_Q(2)) \subset H^0(\Omega_{\P^4}(2)_{|Q}).
\end{equation}
We can define a vector bundle $\F$  by setting
$$\H^* = \cHom(\Omega_{\P^4}(2)_{|Q}, \O_Q) \otimes H^0(\Omega_{\P^4}(2))$$ 
and considering the exact sequence
$$0 \to \F \to \H^* \to \O_Q \to 0$$
where the morphism $\psi: \H^* \to \O_Q$ is defined locally by sending $\phi \otimes \sigma$ to $\phi(\sigma_{|Q})$. 

Now set $\G = \F^*(-1)$ so that we have an exact sequence
\begin{equation}
\label{effe}
0 \to \O_Q \to \H \to \G(1) \to 0
\end{equation}
defining a rank $39$ vector bundle $\G$ on $Q$. We have
\begin{claim}
\label{van} 
$H^j(\G(-2k) \otimes S^k \mathcal S)=0$ for all $j \ge 0, 0 \le k \le 2$.
\end{claim}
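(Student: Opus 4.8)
The plan is to compute every group $H^j(\G(-2k)\otimes S^k\mathcal S)$ directly from the defining sequence \eqref{effe}, reducing everything to the cohomology of line bundles and of (symmetric powers of) the spinor bundle on $Q=Q_3$. Recall that $\H=\Omega_{\P^4}(2)_{|Q}\otimes W^*$ with $W=H^0(\Omega_{\P^4}(2))$ of dimension $\binom{5}{2}=10$, and that the inclusion $\O_Q\to\H$ in \eqref{effe} is, by construction, dual to the evaluation $\psi$, i.e. it is the map $1\mapsto\ev$ coming from the evaluation $W\otimes\O_Q\to\Omega_{\P^4}(2)_{|Q}$. First I would, for each $k\in\{0,1,2\}$, twist \eqref{effe} by $S^k\mathcal S\otimes\O_Q(-2k-1)$, obtaining
$$0\to S^k\mathcal S(-2k-1)\to \Omega_{\P^4}(1-2k)_{|Q}\otimes S^k\mathcal S\otimes W^*\to\G(-2k)\otimes S^k\mathcal S\to0,$$
using $\G(1)(-2k-1)=\G(-2k)$. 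Taking cohomology, the claim for a given $k$ becomes equivalent to the statement that the map induced by $\ev$,
$$H^j(S^k\mathcal S(-2k-1))\longrightarrow W^*\otimes H^j(\Omega_{\P^4}(1-2k)_{|Q}\otimes S^k\mathcal S),$$
is an isomorphism for every $j$; in particular it suffices that both sides vanish for all $j$, and this is what will happen for $k=0,1$.

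The second step is to assemble the needed cohomology. From \cite{o} the spinor bundle on $Q_3$ is ACM, so $H^1(\mathcal S(t))=H^2(\mathcal S(t))=0$ for all $t$, it satisfies $\mathcal S^*\cong\mathcal S(-1)$ and $\det\mathcal S=\O_Q(1)$, whence Serre duality (with $K_Q=\O_Q(-3)$) gives $H^3(\mathcal S(t))\cong H^0(\mathcal S(-4-t))^*$. Consequently $\mathcal S(-1),\mathcal S(-2),\mathcal S(-3)$ have vanishing cohomology in all degrees. For the bundles $\Omega_{\P^4}(t)_{|Q}$ I would use the restriction sequence $0\to\Omega_{\P^4}(t-2)\to\Omega_{\P^4}(t)\to\Omega_{\P^4}(t)_{|Q}\to0$ together with the Euler sequence on $\P^4$; this yields at once $H^\bullet(\Omega_{\P^4}(1)_{|Q})=H^\bullet(\Omega_{\P^4}(-1)_{|Q})=0$, which settles $k=0$ (both sub- and middle terms vanish). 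For $k=1$ the sub-term $\mathcal S(-3)$ vanishes, while the restricted Euler sequence $0\to\Omega_{\P^4}(-1)_{|Q}\otimes\mathcal S\to V^*\otimes\mathcal S(-2)\to\mathcal S(-1)\to0$ exhibits the middle term as sandwiched between the (vanishing) cohomologies of $\mathcal S(-2)$ and $\mathcal S(-1)$; hence $k=1$ is done as well.

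The hard part is the case $k=2$, which is where essentially all the work lies. Here the reduction above is genuinely non-trivial because the sub-term does not vanish: $S^2\mathcal S(-5)=\End_0(\mathcal S)(-4)$, and Serre duality together with global generation give $H^3(S^2\mathcal S(-5))\cong H^0(S^2\mathcal S)^*\neq0$. Thus one cannot simply kill both ends of the sequence; the content of the claim is precisely that $\ev$ induces an isomorphism on $H^3$, all other groups vanishing. To analyze this I would decompose $\mathcal S\otimes\mathcal S=S^2\mathcal S\oplus\O_Q(1)$ and resolve the spinor factors by the fundamental sequence $0\to\mathcal S(-1)\to\O_Q^{\oplus4}\to\mathcal S\to0$, expressing each group $H^\bullet(\Omega_{\P^4}(-3)_{|Q}\otimes S^2\mathcal S)$ and $H^\bullet(S^2\mathcal S(-5))$ in terms of the already-computed cohomology of $\Omega_{\P^4}(s)_{|Q}\otimes\mathcal S$ and of line bundles on $Q$.

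The delicate point — and the main obstacle — is that the resulting long exact sequences carry several nonzero terms in degree $3$, so a bare dimension count is inconclusive; indeed the naive bookkeeping already produces apparently negative answers, signaling that the precise ranks of the connecting and multiplication maps must be determined. I expect the cleanest route is representation-theoretic: all the sheaves in play are $\mathrm{Spin}(5)$-homogeneous on $Q_3=\mathrm{Spin}(5)/P$, so every cohomology group is a $\mathrm{Spin}(5)$-module computable by Bott's theorem and every map is equivariant, hence fixed on each isotypic component up to a scalar by Schur's lemma. Verifying that the scalar attached to $\ev$ is nonzero on the relevant irreducible constituent then forces the isomorphism on $H^3$ and yields $H^\bullet(\G(-4)\otimes S^2\mathcal S)=0$, completing the proof.
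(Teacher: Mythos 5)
Your reduction is exactly the paper's: twisting \eqref{effe} by $S^k\mathcal S(-2k-1)$ reproduces, for $k=2$, the paper's sequence \eqref{q4}, and your treatment of $k=0,1$ (kill both the sub-term and the middle term, the latter via the restricted Euler sequence and the Ulrich vanishings of $\mathcal S$) essentially coincides with the paper's argument. The problem is $k=2$, which you yourself identify as carrying all the content, and which your proposal does not prove. Everything is made to rest on ``verifying that the scalar attached to $\ev$ is nonzero on the relevant irreducible constituent,'' but that verification is never carried out, and it is precisely the crux: Schur's lemma can only tell you the equivariant map $H^3((S^2\mathcal S)(-5))\to H^3(\H(-5)\otimes S^2\mathcal S)$ is zero or injective, and if it were zero the claim would simply be false, since \eqref{q6} (which is derived from the vanishings alone, independently of this map) would then give $H^2(\G(-4)\otimes S^2\mathcal S)\cong H^3((S^2\mathcal S)(-5))\ne 0$. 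Moreover, for Schur's lemma to force an isomorphism rather than a mere injection you would also need the target to be irreducible (equivalently, that $H^3(\Omega_{\P^4}(-3)_{|Q}\otimes S^2\mathcal S)$ is one-dimensional), i.e. exactly the Bott-type computation you defer. So as written this is a plausible plan contingent on its decisive step, not a proof.

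The step can be closed, and the paper does it with no representation theory. By Serre duality, injectivity of the $H^3$-map is equivalent to surjectivity of $H^0(\H^*\otimes S^2\mathcal S)\to H^0(S^2\mathcal S)$. Using $S^2\mathcal S\cong\Omega_Q(2)$, this map is identified with the evaluation
$\Hom(\Omega_{\P^4}(2)_{|Q},\Omega_Q(2))\otimes H^0(\Omega_{\P^4}(2))\to H^0(\Omega_Q(2))$, $\phi\otimes\sigma\mapsto\phi(\sigma_{|Q})$,
and taking for $\phi$ the canonical quotient $\Omega_{\P^4}(2)_{|Q}\to\Omega_Q(2)$ makes surjectivity immediate from the identification \eqref{q10}. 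Combined with the count $h^3((S^2\mathcal S)(-5))=10=h^3(\H(-5)\otimes S^2\mathcal S)$, which the paper gets from Subclaim \ref{van2} and \eqref{q8}, the $H^3$-map is then an isomorphism and \eqref{q6} kills $H^2$ and $H^3$ of $\G(-4)\otimes S^2\mathcal S$. If you wish to keep the equivariant framing, this explicit section is exactly the witness needed for your ``nonzero scalar''; without it, or something equivalent, your argument is incomplete at the one point where the claim has actual content.
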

\begin{proof}
To see this set $\H_1 = \Omega_{\P^4}(2)_{|Q}$ and consider the twisted Euler sequence, for every $l \in \Z$,
\begin{equation}
\label{q1}
0 \to \H_1(l) \to H^0(\O_Q(1)) \otimes \O_Q(l+1) \to \O_Q(l+2) \to 0.
\end{equation}
For $l=-1$ we see that $H^j(\H_1(-1))=0$ for $j \ge 0$, hence also $H^j(\H(-1))=0$ for $j \ge 0$. Then \eqref{effe} tensored by $\O_Q(-1)$ gives that 
$$H^j(\G)=0 \ \hbox{for} \ j \ge 0.$$
Choosing $l=-3$ in \eqref{q1} and tensoring with $\mathcal S$, we get the exact sequence
$$0 \to \H_1(-3) \otimes \mathcal S \to \mathcal S(-2)^{\oplus 5} \to \mathcal S(-1) \to 0.$$
Using the fact that $\mathcal S$ is Ulrich, we deduce that $H^j(\H_1(-3) \otimes \mathcal S)=0$ for $j \ge 0$, hence also $H^j(\H(-3) \otimes \mathcal S)=0$ for $j \ge 0$. Then \eqref{effe} tensored by $\mathcal S(-3)$ gives that 
$$H^j(\G(-2) \otimes \mathcal S)=0 \ \hbox{for} \ j \ge 0.$$
To finish the proof of  \eqref{van} it remains to prove that 
\begin{equation}
\label{q-1}
H^j(\G(-4) \otimes S^2 \mathcal S)=0 \ \hbox{for} \ j \ge 0.
\end{equation}
To this end we collect some well-known vanishings, that can be easily obtained using \cite{o} and restricting to the hyperplane section.

\begin{subclaim}
\label{van2}
\hskip 3cm

\begin{itemize}
\item[(i)] $H^0((S^2 \mathcal S)(l))=0$ for $l \le -2$.
\item[(ii)] $H^1((S^2 \mathcal S)(l))=0$ for $l \ne -2$.
\item[(iii)] $H^2((S^2 \mathcal S)(l))=0$ for $l \ne -3$.
\item[(iv)] $h^2((S^2 \mathcal S)(-3))=1$.
\item[(v)] $H^3((S^2 \mathcal S)(-4))=0$.
\item[(vi)] $h^3((S^2 \mathcal S)(-5))=10$.
\end{itemize}
\end{subclaim}
Setting $l=-5$ in \eqref{q1} and tensoring with $S^2 \mathcal S$, we get the exact sequence
\begin{equation}
\label{q8}
0 \to \H_1(-5) \otimes S^2 \mathcal S \to (S^2 \mathcal S)(-4)^{\oplus 5} \to (S^2 \mathcal S)(-3) \to 0
\end{equation}
and we deduce by Subclaim \ref{van2}(i), (ii) and (iii) that $H^j(\H_1(-5) \otimes S^2 \mathcal S)=0$ for $j=0, 1, 2$, hence also 
\begin{equation}
\label{q5}
H^j(\H(-5) \otimes S^2 \mathcal S)=0 \ \hbox{for} \ j=0, 1, 2.
\end{equation}
Now tensoring \eqref{effe} with $(S^2 \mathcal S)(-5)$ we get the exact sequence
\begin{equation}
\label{q4}
0 \to (S^2 \mathcal S)(-5) \to \H(-5) \otimes S^2 \mathcal S \to \G(-4) \otimes S^2 \mathcal S \to 0
\end{equation}
and applying \eqref{q5} and Subclaim \ref{van2}(ii) and (iii) we get that 
$$H^j(\G(-4) \otimes S^2 \mathcal S)=0 \ \hbox{for} \ j = 0, 1.$$
Moreover \eqref{q5} and \eqref{q4} give rise to the exact sequence
\begin{equation}
\label{q6}
0 \to H^2(\G(-4) \otimes S^2 \mathcal S) \to H^3((S^2 \mathcal S)(-5)) \to H^3(\H(-5) \otimes S^2 \mathcal S) \to H^3(\G(-4) \otimes S^2 \mathcal S) \to 0.
\end{equation}
Now note that from \eqref{q8} we have, applying Subclaim \ref{van2}(iii), (iv) and (v), that $h^3(\H_1(-5) \otimes S^2 \mathcal S)=h^2((S^2 \mathcal S)(-3))=1$, hence
$$h^3(\H(-5) \otimes S^2 \mathcal S))=10.$$
Since $h^3((S^2 \mathcal S)(-5))=10$ by Subclaim \ref{van2}(vi), to complete the proof it remains to show that the morphism 
$$H^3((S^2 \mathcal S)(-5)) \to H^3(\H(-5) \otimes S^2 \mathcal S)$$
is injective, or, by Serre's duality, that
$$H^0(\H^* \otimes S^2 \mathcal S) \to H^0(S^2 \mathcal S)$$
is surjective.

To see the latter, using the isomorphism 
$$\cHom(\Omega_{\P^4}(2)_{|Q}, \O_Q) \otimes \Omega_Q(2) \cong \cHom(\Omega_{\P^4}(2)_{|Q}, \Omega_Q(2))$$ 
and recalling that $S^2 \mathcal S \cong \Omega_Q(2)$ by \cite[Ex.~1.5]{o}, we have that the map $H^0(\H^* \otimes S^2 \mathcal S) \to H^0(S^2 \mathcal S)$ 
can be identified, with our choices, with the map
$$\Hom({\Omega_{\P^4}}_{|Q}(2), \Omega_Q(2)) \otimes H^0(\Omega_{\P^4}(2)) \to H^0(\Omega_Q(2))$$
which sends $\phi \otimes \sigma$ to $\phi(\sigma_{|Q})$. This map is clearly surjective by \eqref{q10}. Hence it is an isomorphism and therefore \eqref{q6} gives that
$$H^j(\G(-4) \otimes S^2 \mathcal S)=0 \ \hbox{for} \ j = 2, 3.$$
This proves \eqref{q-1} and the claim.
\end{proof}
\end{example}

\begin{example} (cfr. cases (xii)-(xiii) in Theorem \ref{main3})
\label{primo}

Let $B$ be a smooth irreducible variety of dimension $b=1, 2$ and let $\F$ be a rank $5-b$ very ample vector bundle on $B$. Let $X = \P(\F)$ with tautological line bundle $\O_X(1)$ and projection $p: X \to B$. Let $M$ be a line bundle on $B$ such that $H^i(M)=0$ for all $i \ge 0$ and set $\E= \Omega_{X/B}(2H+p^*M)$. When $b=2$ suppose also that $H^i(\F(M-\det\F))=0$ for all $i \ge 0$. Then $\E$ is an Ulrich vector bundle for $(X, \O_X(1))$, $\E$ is not big, $\E_{|f} \cong \Omega_f(2)$ and, in many cases, $c_1(\E)^4>0$. This is shown for $b=1$ in \cite[Lemma 4.1]{lm}. With the same method it can be shown for $b=2$. An example for $b=2$ can be obtained by picking a very ample line bundle $L$ on $B$, $\F= L^{\oplus 3}$ and $M=\L(-2L)$ where $\L$ is an Ulrich line bundle for $(B, 2L)$. Explicitly one can take $B=\P^1\times \P^1, L=\O_{\P^1}(1) \boxtimes \O_{\P^1}(1)$ and $M = \O_{\P^1}(-1) \boxtimes \O_{\P^1}(1)$. Note that for $b=2$ we have by \eqref{ellesse} that $\nu(\E)=r+2$. Moreover, in order to get restrictions $\E_{|f}$ with trivial summands, one can add to $\E$ direct summands of type $p^*(\L(\det \F))$, where $\L$ is a line bundle on $B$ such that $H^j(\L \otimes S^k \F^*)=0$ for $j \ge 0, 0 \le k \le b-1$.
\end{example}

\begin{example} (cfr. case (xii) in Theorem \ref{main3})
\label{primobis}

Let $X = \P^1 \times \P^3$ and $\O_X(1)=\O_{\P^1}(1) \boxtimes \O_{\P^3}(1)$. It is easily seen that the vector bundle
$$\E =  [\O_{\P^1}(2) \boxtimes (T_{\P^3}(-1))^{\oplus 2}] \oplus  [\O_{\P^1}(3) \boxtimes \O_{\P^3}]^{\oplus (r-6)}$$
is Ulrich, $c_1(\E)^4 > 0, \nu(\E)=r+2$ and $\E_{|f} = T_{\P^3}(-1)^{\oplus 2} \oplus  \O_{\P^3}^{\oplus (r-6)}$. This is the last possible case in Theorem \ref{main3}(xii), as in \cite[Thm.~1(v)]{su}.
\end{example}

We do not know if the case with restriction $\N(1) \oplus \O_{\P^3}^{\oplus (r-2)}$ actually occurs in Theorem \ref{main3}(xii).

\begin{example} (cfr. case (xiv) in Theorem \ref{main3})
\label{settimo}

Let $B$ be a smooth irreducible curve, let $L$ be a very ample line bundle on $B$ and let $Q=Q_3$. Let $X = B \times Q$ and let $\O_X(1) = L \boxtimes \O_Q(1)$. Then the first projection $p_1 : X \to B$ is a quadric fibration associated to $K_X+3H$. Let $\L$ be an Ulrich line bundle for $(B,L)$ and let $\E=\L(3L) \boxtimes \mathcal S$. Then $\E$ is an Ulrich rank $2$ relative spinor bundle for $(X, \O_X(1))$ by \cite[(3.5)]{b1} and  \cite[Lemma 3.2(iii)]{lms}. Note that $c_1(\E)^4>0$. Moreover $\E$ is not big by \cite[Prop.~3.3(iii)]{lms} and \cite[Lemma 2.4]{lm}.
\end{example}

\begin{landscape}
\begin{table}
\caption{}
\begin{tabular}{ |p{.7cm} |p{2.9cm}| p{3.58cm}| p{4.3cm}| p{3.1cm}| p{3.3cm}| p{2.7cm}| p{1.4cm}|}
\hline
\multicolumn{8}{|c|}{{\bf Cases in Theorem \ref{main3} with $c_1(\E)^4 = 0$}} \\
\hline
Case & $X$ & $\O_X(1)$ & $\E$ & $B$ (if linear Ulrich triple) & case Lemma \ref{aggiu}, $\phi_{\tau}$ & $p$ (Def. \ref{trip}) or $q$ & Example \\
\hline
(i) & $\P^4$ & $\O_{\P^4}(1)$ & $\O_{\P^4}^{\oplus r}$ & & (a) & & \\
\hline
(ii1) & $\P^1 \times \P^3$ & $\O_{\P^1}(1) \boxtimes \O_{\P^3}(1)$ & $p^*(\O_{\P^3}(1))^{\oplus r}$ & $\P^3$ & $(b.2), \P^1 \times \P^3 \to \P^1$ & $\P^1 \times \P^3 \to \P^3$ & \\
\hline
(ii2) & $\P(\F)$ & $\O_{\P(\F)}(1)$ & $p^*(\G(\det \F))$ & curve & $(b.2), p$ & bundle map & \\
\hline
(iii) & $\P^2 \times \P^2$ & $\O_{\P^2}(1) \boxtimes \O_{\P^2}(1)$ & $p^*(\O_{\P^2}(2))^{\oplus r}$ & $\P^2$ & $(c.1)$ & $\P^2 \times \P^2 \to \P^2$ & \\
\hline
(iv) & $\P^1 \times Q_3$ & $\O_{\P^1}(1) \boxtimes \O_{Q_3}(1)$ & $p^*(\mathcal S(1))^{\oplus (\frac{r}{2})}$ & $Q_3$ & $(c.2), \P^1 \times Q_3 \to \P^1$ & $\P^1 \times Q_3 \to Q_3$ & \\
\hline
(v1) & $(\P^2 \times \P^3) \cap H$ & $(\O_{\P^2}(1) \boxtimes \O_{\P^3}(1))_{|X}$ & $q^*(\O_{\P^3}(2))^{\oplus r}$ & & $(c.3), X \to \P^2$ projection linear $\P^2$-bundle & $X \to \P^3$ projection & \\
\hline
(v2) & $\P(\F)$ & $\O_{\P(\F)}(1)$ & $p^*(\G(\det \F))$ & surface & $(c.3), p$ & bundle map & \ref{altro} \\
\hline
(v3) & $\P(\F)$ & $\O_{\P(\F)}(1)$ & $p^*(\G(\det \F))$ & $3$-fold & $(c.3)$, linear $\P^2$-bundle over surface & bundle map & \ref{altro} \\
\hline
(vi1) & $\P(\F)$ & $\O_{\P(\F)}(1)$ & $p^*(\G(\det \F))$ & $3$-fold, del Pezzo fibration, fibers $\P^2$ & $(d.2)$, del Pezzo fibration, fibers $V_7$ & bundle map & \\
\hline
(vi2) & $\P(\F)$ & $\O_{\P(\F)}(1)$ & $p^*(\G(\det \F))$ & $3$-fold, del Pezzo fibration, smooth fibers $\P^1 \times \P^1$ & $(d.2)$, del Pezzo fibration, smooth fibers $\P^1 \times \P^1 \times \P^1$ & bundle map & \ref{secondo-bis}, \ref{quarto-bis} \\
\hline
(vi3) & del Pezzo fibration, smooth fibers $\P(T_{\P^2})$ & del Pezzo fibration & $\E_{|\P(T_{\P^2})}$ pull-back from $\P^2$ & & $(d.2)$, del Pezzo fibration, smooth fibers $\P(T_{\P^2})$ & & \ref{secondo} \\
\hline
(vii) & $\P(\F)$ & $\O_{\P(\F)}(1)$ & $p^*(\G(\det \F))$ & 3-fold & $(d.3)$, quadric fibration over surface & bundle map & \ref{terzo} \\
\hline
(viii) & $\P(\F)$ & $\O_{\P(\F)}(1)$ & $p^*(\G(\det \F))$ & 3-fold & $(d.4), p$ & bundle map & \\
\hline
(ix) & scroll over a normal threefold & scroll & $\E_{|\P^1}$ trivial & & $(d.5)$, scroll over a normal threefold & & \ref{quarto} \\
\hline
(x1) & $\P^1 \times M$, $K_M=-2L$ & $\O_{\P^1}(1) \boxtimes L$ & $p^*(\G(L))$, $\G$ Ulrich for $(M, L)$ & $M$ & $(d.1)$ & $\P^1 \times M \to M$ & \ref{nuovo-due} \\
\hline
(x2) & $\P^1 \times \P(T_{\P^2})$ & $\O_{\P^1}(1) \boxtimes \O_{\P(T_{\P^2})}(1)$ & $p^*(\G \otimes$ $(\O_{\P^1}(2)\boxtimes \O_{\P^2}(3)))$ & $\P^1 \times \P^2$ & $(d.1)$ & $\P(\O_{\P^1}(1)\boxtimes T_{\P^2})$ $\to \P^1 \times \P^2$ & \ref{nuovo} \\
\hline
(x3) & $(\P^2 \times Q_3) \cap H$ & $(\O_{\P^2}(1) \boxtimes \O_{Q_3}(1))_{|X}$ & $q^*(\mathcal S(2))^{\oplus (\frac{r}{2})}$ & & $(d.1)$ & $X \to Q_3$ projection & \\
\hline
(x4) & $(\P^3 \times \P^3) \cap H \cap H'$ & $(\O_{\P^3}(1) \boxtimes \O_{\P^3}(1))_{|X}$ & $q^*(\O_{\P^3}(3))^{\oplus r}$ & & $(d.1)$ & $X \to \P^3$ projection & \\
\hline
(x5) & $\P(\mathcal S)$ & $\O_{\P(\mathcal S)}(1)\otimes p^*(\O_{Q_3}(1))$ & $p^*(\G(3))$ & $Q_3$ & $(d.1)$ & bundle map & \ref{spi} \\
\hline
\end{tabular}
\label{tab1}
\end{table}
\end{landscape}

\begin{landscape}
\begin{table}
\caption{}
\begin{tabular}{ |p{.7cm} |p{1.7cm}| p{2.8cm}| p{8.2cm}| p{1.1cm}| p{3.3cm}| p{2.0cm}| p{2.0cm}|}
\hline
\multicolumn{8}{|c|}{{\bf Cases in Theorem \ref{main3} with $c_1(\E)^4 > 0$}} \\
\hline
Case & $X$ & $\O_X(1)$ & $\E$ & $B$ & case Lemma \ref{aggiu}, $\phi_{\tau}$ & $p$ & Example \\
\hline
(xi) & $Q_4$ & $\O_{Q_4}(1)$ & $\mathcal S', \mathcal S'', \mathcal S' \oplus \mathcal S''$ & & $(b.1)$ & & \\
\hline
(xii) & $\P(\F)$ & $\O_{\P(\F)}(1)$ & $\E_{|f}= T_{\P^3}(-1)\oplus \O_{\P^3}^{\oplus (r-3)}$, \hskip-.06cm $\Omega_{\P^3}(2) \oplus \O_{\P^3}^{\oplus (r-3)}$, $\N(1) \oplus \O_{\P^3}^{\oplus (r-2)}$, or quotient \hskip3.0cm $0 \to \O_{\P^3}(-1)^{\oplus 2} \to \O_{\P^3}^{\oplus (r+2)} \to \E_{|f} \to 0$ & curve & $(b.2), p$ & bundle map & \ref{primo}, \ref{primobis} \\
\hline
(xiii) & $\P(\F)$ & $\O_{\P(\F)}(1)$ & $\E_{|f} \cong T_{\P^2}(-1)\oplus \O_{\P^2}^{\oplus (r-2)}$ & surface & $(c.3), p$ & bundle map & \ref{primo} \\
\hline
(xiv) & quadric fibration & quadric fibration & $\E_{|f}$ spinor bundle on general $f$ & curve & $(c.2), p$ & & \ref{settimo} \\
\hline
\end{tabular}
\label{tab2}
\end{table}
\end{landscape}

\end{document}